\newcommand{\pare}[1]{\left(#1\right)}
\newcommand{\norm}[1]{\left\lVert#1\right\rVert}
\newtheorem{thm}{Theorem}[section]
\newtheorem{lem}[thm]{Lemma}
\newtheorem{cor}[thm]{Corollary}
\newtheorem{prop}[thm]{Proposition}
\newtheorem{dfn}[thm]{Definition}
\newtheorem{rem}[thm]{Remark}
\newtheorem{thmintro}{Theorem}
\newcommand{\TT}{\mathcal{T}}
\newcommand{\WW}{\mathcal{W}}
\newcommand{\Z}{\mathbf{Z}}
\newcommand{\C}{\mathbf{C}}
\newcommand{\R}{\mathbf{R}}
\newcommand{\N}{\mathbf{N}}
\newcommand{\QQ}{\mathcal{Q}}
 \newcommand{\Op}{\mathrm{Op}}
\newcommand{\del}{\partial}
\newcommand{\id}{\mathrm{id}}
\newcommand{\sgn}{\mathrm{sgn}}
\newcommand{\Sp}{\mathrm{Sp}}
\newcommand{\cS}{\mathcal{S}}
\newcommand{\Hom}{\mathrm{Hom}}
\newcommand{\im}{\operatorname{im}}
\DeclareMathOperator*{\colim}{colim}
\DeclareMathOperator{\diag}{diag}
\DeclareMathOperator{\sing}{Sing}
\DeclareMathOperator{\st}{st}
\DeclareMathOperator{\coind}{coind}
\DeclareMathOperator{\ind}{ind}
\newcommand{\supp}{\operatorname{supp}}
\newcommand{\Int}{\operatorname{Int}}
\numberwithin{equation}{section}
\pgfplotsset{compat=1.17}
\def\co{\colon\thinspace}
\title[Twisted generating functions and nearby Lagrangians]{Twisted generating functions and the nearby Lagrangian conjecture}
\author[M. Abouzaid \and S. Courte \and S. Guillermou \and T. Kragh]{Mohammed Abouzaid \and Sylvain Courte \and Stéphane Guillermou \and Thomas Kragh}
\begin{document}

\begin{abstract}
We prove that, for closed exact embedded Lagrangian submanifolds of cotangent bundles, the homomorphism of homotopy groups induced by the stable Lagrangian Gauss map vanishes. In particular, we prove that this map is null-homotopic for all spheres. The key tool that we introduce in order to prove this is the notion of twisted generating function and we show that every closed exact Lagrangian can be described using such an object, by extending a doubling argument developed in the setting of sheaf theory. Floer theory and sheaf theory constrain the type of twisted generating functions that can appear to a class which is closely related to Waldhausen's tube space, and our main result follows by a theorem of Bökstedt which computes the rational homotopy type of the tube space.
\end{abstract}

\maketitle

\tableofcontents

\section{Introduction}

\subsection{The Gauss map for nearby Lagrangians}
\label{sec:gauss-map-nearby}

The nearby Lagrangian conjecture predicts that any closed exact Lagrangian
submanifold $L$ in the cotangent bundle $T^* M$ of a closed manifold $M$ is Hamiltonian isotopic to the
zero-section; we refer to such a submanifold $L$ as a \emph{nearby Lagrangian}.
The conjecture is wide open in general and presently known to hold only for $M=S^1$ (where it is elementary), $M=S^2$ (\cite{Hind2012}) and $M=T^2$ (\cite{dimitroglou_rizell_lagrangian_2016}).
A first obstruction is that the projection $\pi\colon L \to M$ may not even be homotopic to a diffeomorphism,
or even worse that $L$ may not be diffeomorphic to $M$.
The first and fourth authors have shown in \cite{Kragh2013} and \cite{Abouzaid2012a} that $\pi$ is at least a homotopy equivalence, a result reproved by
the third author in \cite{guillermou_quantization_2012}. In \cite{AbouzaidKragh2018}, the first and fourth authors further proved that $\pi$ is a simple homotopy equivalence. Constraints on the smooth structure of $L$ have also been
discovered, e.g. by the first author in \cite{Abouzaid2012} in the case where $M$ is a sphere (see also \cite{ekholm_lagrangian_2015}).

If the conjecture holds, then the tangent bundle of $L$ and the cotangent fibres define homotopic sections of the restriction of the Lagrangian Grassmannian bundle of $T^*M$ to $L$. This implies the triviality of the \emph{stable Gauss map} $L\to \Lambda_0(\C^\infty)$ (see Definition~\ref{dfn:stablegaussmap},
the stable Lagrangian Grassmannian $U/O$ is most often denoted $\Lambda_0(\C^\infty)$ in this paper). Our main result in this regard is the following:
\begin{thmintro}\label{thmintro:gaussmap}
Let $M$ be a closed manifold and $L$ a closed exact Lagrangian submanifold of $T^* M$, then the stable Gauss
map $L\to \Lambda_0(\C^\infty)$ vanishes on all homotopy groups.
\end{thmintro}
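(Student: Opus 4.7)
The plan is to extract the vanishing of the stable Gauss map on homotopy groups from the twisted generating function for $L$ constructed elsewhere in this paper. To motivate the approach, recall the classical Giroux--Latour--Viterbo picture: if $L$ admitted an honest generating function quadratic at infinity $S\co E\to\R$ on a finite-rank vector bundle $E\to M$, then the tangent bundle $TL$ would be determined pointwise by the fiberwise Hessian of $S$. After stabilization, $TL$ would appear as a graph over the horizontal Lagrangian $\pi^*TM$, with the graph controlled by a self-adjoint endomorphism. Because the space of such endomorphisms on a fixed vector space is contractible, this factorization would even make the stable Gauss map $L\to\Lambda_0(\C^\infty)$ null-homotopic.

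Since untwisted generating functions are not known to exist for general nearby Lagrangians, I would instead invoke the twisted generating function produced in this paper. In this generalization $E$ is replaced by a twisted object, and the Hessian correspondingly takes values in a bundle of self-adjoint forms over $L$ whose fibers remain contractible but whose global structure carries a non-trivial twist. The first step is to make precise, in the twisted setting, the analogue of the formula expressing $TL$ as a graph modulated by a twisted Hessian, and to identify the resulting factorization of the stable Gauss map through the space of twisted self-adjoint forms.

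The second step is to show that this factorization vanishes on every homotopy group, even though it may fail to be globally null-homotopic. Given a class $\alpha\co S^k\to L$, I would pull the twisted generating function data back along $\alpha$; any bundle-type twist over $S^k$ is captured by a clutching datum on $S^{k-1}$, which one would combine with the fiberwise contractibility of the space of self-adjoint forms to produce a null-homotopy of the stable Gauss map composed with $\alpha$. Running this argument for every $k$ and every $\alpha$ then gives the theorem.

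The hardest part, which presumably occupies the heart of the paper, is the construction of the twisted generating function itself, together with the verification that the resulting spaces of twisted self-adjoint forms are well-behaved enough for the pullback argument to go through. A secondary difficulty is to check that the stable Gauss map really can be read off from the Hessian of the twisted generating function in a way compatible with the twist, so that the contractibility of the fibers translates into a null-homotopy of the restriction to each $S^k$. Once these structural facts are in place, the deduction of the homotopy-group vanishing should be essentially formal.
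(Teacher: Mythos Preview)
Your argument has a genuine gap: a twisted generating function by itself does not constrain the stable Gauss map beyond what is already known from $\pi\colon L\to M$ being a homotopy equivalence. Theorem~\ref{thmintro:twistedgirouxlatour} says precisely that a twisted generating function with twist classified by $h\colon M\to\Lambda_0(\C^\infty)$ exists if and only if $g_\varphi\simeq h\circ\pi$; so the twist \emph{records} the Gauss map rather than trivializing it. Your ``fiberwise contractibility'' observation is the statement that the reduction map $|B(\Lambda^{i\R^\infty}_\infty(\C^\infty),\QQ)|\to\Lambda_0(\C^\infty)$ is a homotopy equivalence (Theorem~\ref{thm:girouxbott}): the lift of $g_\varphi$ that the twisted Hessian provides therefore lands in a space equivalent to $\Lambda_0(\C^\infty)$ and simply recovers $g_\varphi$. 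Pulling back to $S^k$ changes nothing: the clutching datum on $S^{k-1}$ you propose to use \emph{is} the class $g_\varphi\circ\alpha\in\pi_k(\Lambda_0(\C^\infty))$ you are trying to kill, and no fiberwise contractibility argument will make it vanish. In short, your second step is circular.

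What the paper actually does requires two further ingredients you do not mention. First, one upgrades the twisted generating function to one which is linear at infinity and \emph{of tube type} (Theorem~\ref{thmintro:tgftube}); this uses essentially that $L$ is embedded rather than merely immersed, via a doubling trick, the homotopy lifting property, and a homological computation with the difference function. A tube-type generating function then lifts $h$ through the map $|B(\TT_\infty,\QQ)|\to|B(\Z,\QQ)|\simeq\Lambda_0(\C^\infty)$, where $\TT_\infty$ is a stable space of tube-like functions. Second, and crucially, one invokes B\"okstedt's theorem that Waldhausen's rigid tube map $BO\to\WW_\infty$ is a rational homotopy equivalence (together with Adams's result on the $J$-homomorphism for the $\Z/2$ groups) to conclude that $|B(\TT_\infty,\QQ)|\to|B(\Z,\QQ)|$ vanishes on all homotopy groups. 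This last input is deep and external; nothing in your proposal plays its role.
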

Note that the fourth author in \cite{kragh_generating_2018} proves the triviality of the stable Gauss map
in the case where $M=S^n$ under the additional assumption that $L\subset T^* M$ coincides with the zero-section
above a neighborhood of some point in $M$, which is a particular case of Theorem~\ref{thmintro:gaussmap}.

In order to relate this to previous results, recall that the obstruction to the triviality of the Gauss map on the $2$-skeleton of
$L$ is given by two cohomology classes: the first Maslov class $\mu_1 \in H^1(L;\Z)$ and a relative Stiefel-Whitney class
$w_2\in H^2(L;\Z/2)$. These were shown to vanish by the first and fourth authors using Floer theory,
and later by the third author using microlocal sheaf theory.

Regarding higher obstruction, the first and fourth authors already proved a vanishing result
(see~\cite[Prop. 2.2]{AbouzaidKragh2013}), namely, after looping, the composition of the stable Gauss map
$L\to U/O =B(\Z\times BO)$ and the map $B(\Z\times BO) \to B(\Z\times BG)$ induced by the $J$-homomorphism
is nullhomotopic. Jin proved later in~\cite{jin_2020} that the same vanishing holds without looping. We will
give an alternate proof of this improvement in Section~\ref{sec:tubespaces}.
 
\begin{thmintro}\label{thmintro:gaussJ}
Let $M$ be a closed manifold and $L$ a closed exact Lagrangian submanifold of $T^* M$, then the composition
$L\to U/O\to B(\Z\times BG)$ of the
stable Gauss map  with (a delooping of) the $J$-homomorphism $U/O=B(\Z\times BO) \to B(\Z\times BG)$ is nullhomotopic.
\end{thmintro}

Both of these results are proved using generating functions, a classical tool
in symplectic and contact topology, which will be the main subject of this paper.
Moreover, the only essential input from Floer theory or microlocal sheaf theory that we use is the result that $\pi$
is a homotopy equivalence.

\subsection{Twisted generating functions}
\label{sec:twist-gener-funct}

The relevance of generating functions to the study of Lagrangians can be first seen from the following theorem of Giroux and Latour (Theorem~\ref{thm:girouxlatour}):
for a closed manifold $L$, a Legendrian immersion $L\to J^1 M$ admits a \emph{generating function}
(see Definition~\ref{dfn:gf}) if and only if the stable Gauss map $L\to \Lambda_0(\C^\infty)$ is trivial.
Let us remark that we are not able to prove that any exact Lagrangian embedding has a generating function so we cannot use the "only if"
part to prove that the stable Gauss map is trivial. To circumvent this issue, we introduce a weaker object that we
call a \emph{twisted} generating function (see Definition~\ref{dfn:tgf}) and prove the following theorem.

\begin{thmintro}\label{thmintro:twistedgirouxlatour}
Let $M$ and $L$ be closed manifolds. A Legendrian immersion $L\to J^1 M$ admits a twisted generating function
if and only if the stable Gauss map $L\to \Lambda_0(\C^\infty)$ factors up to homotopy through $\pi\colon L \to M$.
\end{thmintro}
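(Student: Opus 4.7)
For the direct implication (twisted generating function implies factorisation), I take a twisted generating function to be the data of a fibre bundle $\pi_E \colon E \to M$ together with a function $F \colon E \to \R$ whose fibrewise critical locus, cut out transversely, parametrises $L$ via $(x,\xi) \mapsto (x, \d_M F(x,\xi))$. At any point of $L$ the tangent space is computed from $T_x M$, the vertical tangent bundle $V = \ker \d \pi_E$, and the fibrewise Hessian of $F$. A direct calculation shows that after stabilising by a factor $V \oplus V^*$ with its canonical symplectic structure and passing to the Lagrangian Grassmannian, the Gauss map of $L$ factors as the composition of $\pi$ with a map $M \to \Lambda_0(\C^\infty)$ determined by the vertical tangent bundle of $E$, yielding the desired factorisation up to homotopy.

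For the converse (factorisation implies twisted generating function), assume $g \simeq h \circ \pi$ for some $h \colon M \to \Lambda_0(\C^\infty)$. I would first use that $\Lambda_0(\C^\infty) = U/O$ classifies stable Lagrangian planes to represent $h$, up to stabilisation and homotopy, by a real vector bundle $V \to M$ such that the family of Lagrangian planes $V \subset V \otimes \C$ induces $h$. The goal is then to realise $L \to J^1 M$ by a generating function whose underlying bundle $E \to M$ is built from $V$. I would then run the Giroux--Latour h-principle parametrised over $M$: the hypothesis supplies the stable identification needed to globalise the local construction, producing a function on a fibre bundle $E \to M$ modelled on $V$ whose fibrewise critical locus recovers $L$ as a Legendrian in $J^1 M$.

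The main technical obstacle is this converse direction: adapting the h-principle arguments of Giroux and Latour --- which involve manipulation of function germs, stabilisation by fibrewise quadratic forms, and surgery of fibrewise singularities --- so that the ambient parameter space is a possibly non-trivial fibre bundle over $M$ rather than a trivial $\R^N$-bundle. Verifying that the homotopy-theoretic input $g \simeq h \circ \pi$ is precisely what is needed to trivialise the obstructions arising in that construction is where the bulk of the work should lie. The direct implication, by contrast, is essentially a naturality statement for the Lagrangian Gauss map under the generating function construction.
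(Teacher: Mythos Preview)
Your proposal rests on a definition of ``twisted generating function'' that is not the one in the paper, and this mismatch produces a genuine gap in the converse direction.

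In the paper, a twisted generating function is \emph{not} a function on a global fibre bundle $E\to M$. It is a directed open cover $(M_i)$ of $M$, ordinary generating functions $f_i\colon U_i\subset M_i\times\R^{n_i}\to\R$, and a $1$-cocycle $q_{ij}\colon M_{ij}\to\QQ_{n_j-n_i}$ of \emph{non-degenerate quadratic forms} with $f_i\oplus q_{ij}=f_j$ (Definition~\ref{dfn:tgf}). The twisting data $(n_i,q_{ij})$ is a $\QQ$-twisted map $M\to\Z$, and the point of Section~\ref{subsec:girouxbott} is the homotopy equivalence $|B(\Z,\QQ)|\simeq\Lambda_0(\C^\infty)=U/O$. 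This is what makes the twisting \emph{exactly} adequate to encode an arbitrary map $h\colon M\to U/O$.

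Your converse argument breaks at the sentence ``represent $h$, up to stabilisation and homotopy, by a real vector bundle $V\to M$ such that the family of Lagrangian planes $V\subset V\otimes\C$ induces $h$.'' That map $BO\to U/O$ is not surjective on homotopy classes: already on $\pi_1$ it is the reduction $\Z/2\to\Z$ is wrong-way (in fact $\pi_1(BO)=\Z/2$ while $\pi_1(U/O)=\Z$, the Maslov class). So an arbitrary $h$ cannot be modelled by a vector bundle, and a generating function on a vector bundle over $M$ cannot realise every factorisation $g_\varphi\simeq h\circ\pi$. The paper avoids this by twisting with the monoid $\QQ$ rather than with $GL$ or $O$: quadratic forms carry both a negative eigenspace \emph{and} a signature, and it is this extra $\Z$ that makes $|B(\Z,\QQ)|$ land on $U/O$ rather than $BO$.

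Concretely, the paper's converse goes in three steps (Section~\ref{subsec:existencetgf}): (1) use the $\QQ$-equivariant Serre fibration property of symplectic reduction $\Lambda_\infty^{i\R^m}(\C^m)\to\Lambda_0(\C^m)$ (Lemma~\ref{lem:chekanovlinear}, Theorem~\ref{thm:giroux}) to produce, from $h$, a $\QQ$-twisted lift $(\phi_i,q_{ij})$ of the Gauss section $G$; (2) act by the group $P^m$ of fibrewise linear automorphisms to make the $\phi_i$ compatible with a fixed embedding $L\hookrightarrow M\times\R^k$; (3) integrate the resulting Legendrian plane fields to germs of Legendrian submanifolds via the Weinstein neighbourhood theorem (Lemma~\ref{lem:weinsteinnhd}), yielding the $f_i$. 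The $\QQ$-equivariance in step~(1) is the technical heart and is what you are missing: it is exactly what guarantees that the local lifts $\phi_i$ can be chosen to satisfy $\phi_i\cdot q_{ij}=\phi_j$ for a cocycle $q_{ij}$ pulled back from $M$, not merely from $L$.

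Your direct implication is closer in spirit, but note that for the paper's definition the map $h$ is read off directly from the cocycle $(n_i,q_{ij})$ via $|B(\Z,\QQ)|\simeq U/O$, not from a vertical tangent bundle over $M$ (which need not exist in any canonical way).
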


Note that this condition on the stable Gauss map holds for nearby Lagrangians since we know that $\pi$
is a homotopy equivalence. Our concrete model for a twisted generating function is a (directed) open cover $(M_i)$ of $M$
and a collection of generating functions $(f_i)$ defined on open sets of $M_i\times \R^{n_i}$
which glue on double intersections up to a $1$-cocycle $(q_{ij})$ of fiberwise non-degenerate quadratic forms,
namely
\[f_i \oplus q_{ij}=f_j\text{ and } q_{ij}\oplus q_{jk}=q_{ik}.\]
Note that the cocycle $q_{ij}$ is valued in the space $\QQ$ of non-degenerate quadratic forms (with eigenvalues $\pm 1$), which is only a topological monoid.
The theory of principal $\QQ$-bundles and associated bundles is not completely standard
and we develop some bits of the theory that we need in Appendix~\ref{app:monoids-and-bundles}, with inspiration from \cite{baasdundasrognes_2004}.
It turns out that the integers $(n_i)$ and cocycle $(q_{ij})$ are classified by a map $M\to \Lambda_0(\C^\infty)$,
so are perfectly suited to encode the stable Gauss map. We note also that the notion of twisted generating function
is inspired by that of twisted sheaf used by the third author in \cite{guillermou_quantization_2012}. Theorem~\ref{thmintro:twistedgirouxlatour} is proved in
Section~\ref{sec:girouxlatour}.

Another issue is that the generating functions produced in this way (twisted or not)
are inexploitable because they may not be well-behaved at infinity. For example, any attempt to assign a homology group to a general generating function fails as the moduli spaces of gradient flow lines may escape to infinity, obstructing the Morse operator from defining a differential. However, for an \emph{embedded} Lagrangian
we give a procedure to convert a (twisted) generating function into another one which is well-behaved at infinity,
leading to the following result, proved in Section~\ref{sec:doubling}.

\begin{thmintro}\label{thmintro:tgftube}
Let $M$ be a closed manifold and $L$ a closed exact Lagrangian submanifold of $T^* M$. Then there exists a
twisted generating function of tube type which tube generates $L$.
\end{thmintro}

Here a generating function of tube type is essentially (and up to stabilizations) a generating function which is linear at infinity such that the regular sub-level set $\{f\leq 0\}$ has the homotopy type of a sphere (see Definition~\ref{dfn:tgftube}). The notion of ``tube generation'' (see Definition~\ref{dfn:tgftube}) means that we only consider what $f$ restricted to this sub-level set generates.

The key point is that we get the desired control of the Morse theory of the generating function to extract topological and homological information about $L$ from it. Generating functions of tube type appear in our context essentially as a refinement of the fact that the homological intersection number of $L$ with each cotangent fibre is $1$ (this is proved using either Floer theory or microlocal sheaves, in the course of showing the stronger statement discussed above that the projection map from $L$ to $M$ is a homotopy equivalence). The tube condition is related  to the tube space $\TT_\infty$ arising in Waldhausen's
manifold approach to algebraic K-theory of spaces (see \cite{waldhausen_algebraic_1982}): a \emph{tube} is a codimension $0$ submanifold of $\R^{m+1}$
which corresponds to the attachment of a single trivial handle on top of $\R^m\times (-\infty,0]$.
Stably, the space of such tubes was proved by Bökstedt in \cite{bokstedt_rational_1984} to be rationally homotopy equivalent
to $BO$. This gives important constraints on how a tube bundle may be twisted,
leading to Theorem \ref{thmintro:gaussmap}. This is explained in Section~\ref{sec:tubespaces} where we
reformulate Bökstedt's theorem in terms of function spaces arising naturally from our construction
in Section~\ref{sec:doubling}. We note that Bökstedt's theorem is also crucial in \cite{kragh_generating_2018}.

Finally we observe that in the case where $M$ is a homotopy sphere, Theorem~\ref{thmintro:gaussmap} gives
the triviality of the stable Gauss map and our method provides a genuine generating function.

\begin{thmintro}\label{thmintro:homotopysphere}
  Let $M$ be a homotopy sphere and $L$ a closed exact Lagrangian submanifold of $T^* M$. Then the stable Gauss map of $L$ is null homotopic and there exists a generating function of tube type which tube generates $L$.
\end{thmintro}

\begin{rem}
  While we do not discuss applications to the diffeomorphism type of nearby Lagrangian submanifolds in this paper, forthcoming work of Abouzaid, \'Alvarez-Gavela, Courte, and Kragh, uses its main result, stated as Theorem \ref{thmintro:tgftube} below, as the starting point to prove that nearby exact Lagrangian submanifolds have associated normal invariant (measuring the difference between the tangent spaces of the Lagrangian and that of the base) which is always $2$-torsion (in fact, the main result is sharper, and involves the action of the Hopf map). This provides the first general result constraining the diffeomorphism type of Lagrangians, beyond their simple homotopy type, and relies essentially on the notion of \emph{twisted generating function} introduced below. Recently, Porcelli and Smith announced an alternative proof of a similar result using homotopical refinements of Floer homology. However, their approach still uses the results in this paper as a starting point.
\end{rem}

\paragraph{Acknowledgements}
The second and third authors would like to thank Emmanuel Giroux for teaching them his proof of Bott periodicity
and for his stimulating interest in this work. The first and fourth authors would like to thank Tobias Ekholm for insightful discussion on the subject.

We would also like to thank Trygve Poppe Oldervoll for pointing out an omission in a previous version of the proof of Lemma~\ref{lem:translation}. 

The first author was supported by the Simons Foundation through its ``Homological Mirror Symmetry'' Collaboration grant SIMONS 385571, and by NSF grants DMS-1609148, and DMS-1564172.

The second and third authors are partially supported by the ANR project MICROLOCAL
(ANR-15CE40-0007-01).

The fourth author was supported by the VR project ``2018-04237'' titled ``Lagrangian submanifolds, Algebraic K-theory and Quantum Physics''.

\section{A twisted generalization of a theorem of Giroux and Latour}\label{sec:girouxlatour}

\subsection{Twisted generating functions}
Let $M$ and $L$ be manifolds and $\varphi\times z\colon L\to J^1 M=T^* M\times \R$ a Legendrian immersion. We denote the projection from the cotangent bundle to the base by $\pi_M\colon T^*M \to M$ and its composition with the immersion of $L$ by $\pi=\pi_M\circ \varphi \colon L \to M$. In the symplectic vector bundle $E=\varphi^* (TT^*M)$ over $L$, there are two Lagrangian subbundles: 
\begin{itemize}
\item the \emph{vertical} subbundle $V=\ker d\pi_M$,
\item the tangent bundle of $L$, denoted $G$ for \emph{Gauss} section.
\end{itemize}
They can equivalently be considered as sections of the Lagrangian Grassmannian bundle $\Lambda_0(E)\to L$ associated to $E$ (the subscript $0$ in $\Lambda_0$ is just here for coherence with later notations).

\begin{dfn}\label{dfn:gf}
A \emph{generating function} over a manifold $M$ is a triple $(n,U,f)$ where $n\in \N$,
$U$ is an open set in $M\times \R^n$ and $f\colon U\to \R$
is a smooth function such that the fiberwise derivative $(x,v)\in U\mapsto \frac{\del f}{\del v}(x,v)\in (\R^n)^*$
vanishes transversely.

The submanifold $\Sigma_f=\{(x,v)\in U, \frac{\del f}{\del v}(x,v)=0\}$, called the \emph{singular set} of $f$,
then has a natural Legendrian immersion
$i_f\colon \Sigma_f \to J^1 M$ given by $(x,v)\mapsto (x,\frac{\del f}{\del x}(x,v),f(x,v))$.

We say that a Legendrian immersion $\varphi\times z : L \to J^1 M$ \emph{admits a generating function}
if there exists a generating function $(n,U,f)$ over $M$ and a diffeomorphism
$\psi \colon L\to \Sigma_f$ such that $i_f\circ \psi=\varphi\times z$. A generating function for $\varphi \times z$
is then a quadruple $(n,U,f,\psi)$. We also say that $(n,U,f,\psi)$ generates the Lagrangian immersion $\varphi$.
\end{dfn}

\begin{rem}
As discussed in the introduction, this definition of a generating function is rather weak since we do not impose conditions at infinity.
We cannot run any global Morse theoretic argument without more hypotheses on $U$ and $f$.
\end{rem}

\begin{rem}
Note that for Lagrangian immersions we have to consider generating $1$-forms instead of generating functions. We avoid this here by considering only Legendrian immersions (i.e. lifts of \emph{exact} Lagrangian immersions) but this is in no way crucial for the results of this section.
\end{rem}

The question of the existence of generating functions was settled independently by Giroux and Latour (see \cite{Giroux1990} and \cite{Latour1991}):

\begin{thm}[Giroux, Latour]\label{thm:girouxlatour}
Let $M$ be a manifold and $L$ a closed manifold. The Legendrian immersion $\varphi \times z\colon L\to J^1 M$ admits a generating function
if and only if the sections $G$ and $V$ are stably homotopic.
\end{thm}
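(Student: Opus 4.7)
The plan is to prove the two implications separately, with the converse being the substantial direction.

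Given a generating function $(n,U,f)$ realizing $L \cong \Sigma_f$, the forward direction follows from a direct computation. At each point of $\Sigma_f$ the tangent to $L$, viewed inside $T(T^*M)$, can be expressed in local coordinates as the graph of the reduced Hessian $f_{xx}-f_{xv}f_{vv}^{-1}f_{vx}$, the fiberwise block $f_{vv}$ being non-degenerate by transversality of $\partial_v f$. After stabilizing the symplectic bundle $E$ by $T^*\R^n$ (with its splitting into horizontal and vertical Lagrangians $T\R^n$ and $(T\R^n)^*$), both $V\oplus(T\R^n)^*$ and $G\oplus T\R^n$ become homotopic to the graph of the full Hessian of $f$ via an explicit family of Lagrangians rotating horizontal to vertical; the non-degeneracy of $f_{vv}$ ensures this family stays Lagrangian throughout. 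This produces the desired stable homotopy between $G$ and $V$.

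For the converse, the scheme is a cover-and-glue construction. A local existence result produces, near any point of $M$, a local generating function for $L$; concretely this follows from a Legendrian-Darboux argument using a Lagrangian complement to the vertical in a trivialized chart. I would choose an open cover $(U_i)$ of $M$ with local generating functions $f_i\colon V_i\subset U_i\times\R^{n_i}\to\R$ of fiber dimension $n_i$. On each overlap, the classical Hörmander-Chaperon-Viterbo uniqueness-up-to-stabilization theorem for generating functions implies that after adding non-degenerate quadratic forms $q_{ij}$ to each side, the two $f$'s become identified by a fiber-preserving diffeomorphism. The integers $n_i$ together with the cocycle $(q_{ij})$ assemble into a principal $\QQ$-bundle over $M$, whose classifying map $M\to \Lambda_0(\C^\infty)$ agrees, after pulling back along $\pi\colon L\to M$, with the stable difference $[G]-[V]$. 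Under the hypothesis, this classifying map is null-homotopic, so after further stabilization the cocycle can be trivialized up to homotopy.

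The main obstacle is then to promote this purely homotopical trivialization to an \emph{actual} global generating function: one must coherently choose fiber-preserving diffeomorphisms on triple overlaps and smoothly trivialize the resulting fiber bundle so that the local data live on Euclidean fibers of a fixed dimension. I expect this to be the core difficulty. Giroux's approach accomplishes it via an $h$-principle for wavefronts of Legendrians, while Latour's approach proceeds by induction on a handle decomposition of $M$, using Cerf-Hatcher-Wagoner pseudoisotopy theory to perform the necessary smoothings. Either route converts the vanishing of the formal obstruction into a genuine smooth construction, completing the proof.
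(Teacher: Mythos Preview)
The paper does not give a standalone proof of this cited result, but its proof of the twisted generalization (Section~2.3) specializes to Giroux's argument when the cover consists of a single set, so that is the natural comparison.

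Your forward direction is fine; the paper's version is more structural but equivalent in content. Rather than computing with Hessian blocks, the paper observes that the graph of $df$ along $\Sigma_f$ defines a section $\phi\colon L\to\Lambda^V(E)$ lifting $G$ under symplectic reduction $\rho$, then trivializes $E$ and invokes the fact that $\Lambda_\infty^{i\R^\infty}(\C^\infty)\simeq\Z$ (Theorem~2.20) to conclude that $\rho$ is stably nullhomotopic, hence so is $g_\varphi$.

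Your converse, however, is not a proof: you set up a cover-and-glue picture and then defer the main step to the literature. More importantly, the scheme you outline is \emph{not} how Giroux's argument (as reproduced in Section~2.3.2) actually goes, and it has an internal problem: the H\"ormander-type uniqueness on overlaps gives equivalence up to stabilization \emph{and fiber-preserving diffeomorphism}, so the transition data do not land in $\QQ$ alone, and even if they did, promoting a nullhomotopy of the cocycle to a smooth untwisting is exactly the step you leave open.

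Giroux's approach avoids gluing altogether. The three steps are: (1)~use the Serre fibration property of symplectic reduction (Lemma~2.15 / Theorem~2.17) to produce a single global section $\phi\colon L\to\Lambda^V(E)$ lifting $G$ --- this is the ``infinitesimal generating function'', pure linear algebra; (2)~fix an embedding $\psi\colon L\to M\times\R^k$ and modify $\phi$ by the explicit action of the triangular group $P^m$ so that the associated monomorphism $u_\phi$ agrees with $d\psi$; (3)~now $\phi$ is a field of Legendrian planes tangent to the isotropic image of $L$ in $J^1(M\times\R^k)$, and a single application of the Weinstein neighbourhood theorem (Lemma~2.23) integrates it to a germ of Legendrian submanifold, which is the $1$-jet graph of the desired $f$. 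There is no open cover, no cocycle, no Cerf theory and no pseudoisotopy. The idea you are missing is to work globally at the level of Lagrangian plane fields first, and postpone the production of an actual function to one integration step at the end.
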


Here stably homotopic means that for some large enough integer $k$, the sections $\R^k\times G$ and $\R^k\times V$ of the Lagrangian Grassmannian
bundle 
\[\Lambda_0(\C^k \times E)\to L\]
are homotopic. This condition can be rephrased in terms of the stable Gauss map of the Lagrangian immersion $\varphi$ which we now define. Roughly it is a map that measures the stable difference between the sections $G$ and $V$. For this definition we assume that $L$ is compact.
Pick a Lagrangian subbundle $H$ in $E$ which is transverse to $V$. The symplectic form then gives a canonical isomorphism $H\oplus H^*\to E$ which takes $H^*$ to $V$.
Since $L$ is compact we may find a vector bundle $H'\to L$ such that $H'\oplus H$ is a trivial vector bundle. Writing $m$ for the dimension of this bundle, we pick a trivialization $H'\oplus H \to L\times \R^m$. We then have the following canonical isomorphisms of symplectic vector bundles:
\begin{equation}
H' \oplus (H')^* \oplus E \simeq H'\oplus (H')^* \oplus H \oplus H^* \simeq (H'\oplus H) \oplus (H'\oplus H)^* \simeq L\times \C^m.
\end{equation}
The Lagrangian subbundle $H'\oplus G$ of $H'\oplus (H')^*\oplus E$ can now be viewed as a subbundle of the trivial symplectic vector bundle, hence simply as a map $L\to \Lambda_0(\C^m)$, with target the Grassmanian of linear Lagrangian subspaces of $\C^m$.
We may now take a colimit as $m$ goes to infinity and define
\begin{equation}
\Lambda_0(\C^\infty)=\colim_{m\to \infty}\Lambda_0(\C^m)
\end{equation}
with respect to the maps $\Lambda_0(\C^m)\to \Lambda_0(\C^{m+1})$ which takes a Lagrangian $G \subset \C^m$ to $\R\times G \subset \C \times \C^m=\C^{m+1}$.
\begin{dfn}\label{dfn:stablegaussmap}
The \emph{stable Gauss map} of the Lagrangian immersion $\varphi$ is the composition
\[g_\varphi\colon L\to \Lambda_0(\C^m)\to \Lambda_0(\C^\infty).\]
\end{dfn}
The homotopy class of this map is independent of the choices that we have made. First the space of Lagrangians transverse to $V$ in $E$ is contractible so the choice of $H$ is irrelevant. The choice of complement $H'$ and trivialization $H'\oplus H \to L\times \R^m$ is also the same as a monomorphism $H\to L\times \R^m$ and a choice of complement of its image. The space of such monomorphisms is contractible in the limit $m\to \infty$ and the space of complements of the image is also contractible. This allows us to rephrase the condition in Theorem~\ref{thm:girouxlatour}.

\begin{prop}\label{prop:trivialgaussmap}
The stable Gauss map of a Lagrangian immersion is nullhomotopic if and only if the sections $G$ and $V$ of $\Lambda_0(E)$
are stably homotopic.
\end{prop}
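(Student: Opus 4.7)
The plan is to reduce both directions of the equivalence to a single key observation: that, under the trivialization $H' \oplus (H')^* \oplus E \simeq L \times \C^m$, the Lagrangian subbundle $H' \oplus V$ is nullhomotopic as a map $L \to \Lambda_0(\C^m)$. Combined with direct-sum manipulations, this identifies $g_\varphi$ with the ``stable difference'' of $G$ and $V$, from which the equivalence follows.

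To prove the key observation, I would first establish that $H$ and $V$ are homotopic as sections of $\Lambda_0(E)$. Since they are transverse Lagrangian subbundles, one may choose a compatible almost complex structure $J$ on $E$ with $JH = V$; the space of such $J$'s is contractible (its fiberwise incarnation is the space of symmetric positive definite bilinear forms on $H_p$ via the identification $V \simeq H^*$ induced by $\omega$), so a continuous global choice exists. Then $t \mapsto e^{tJ\pi/2} H$ gives a canonical fiberwise path from $H$ to $V$ in $\Lambda_0(E)$, providing the desired section-level homotopy. Taking direct sum with the section $H' \subset H' \oplus (H')^*$ yields a homotopy $H' \oplus V \simeq H' \oplus H$, and under the trivialization the right side becomes the constant horizontal Lagrangian $L \times \R^m \subset L \times \C^m$, which is nullhomotopic.

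For $(\Leftarrow)$: if $\R^k \oplus G \simeq \R^k \oplus V$ in $\Lambda_0(\C^k \oplus E)$, then direct-summing with $H'$ gives a homotopy $H' \oplus \R^k \oplus G \simeq H' \oplus \R^k \oplus V$ in $\Lambda_0(L \times \C^{m+k})$; the right side is nullhomotopic by the stabilized version of the key observation (add a constant $\R^k$), hence so is the left, which represents $g_\varphi$. For $(\Rightarrow)$: if $g_\varphi$ is nullhomotopic then, after sufficient stabilization, $\R^k \oplus H' \oplus G$ and $\R^k \oplus H' \oplus V$ are both homotopic to the constant Lagrangian in $\Lambda_0(L \times \C^{m+k})$, hence to each other. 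To cancel $H'$, I would further direct-sum with the Lagrangian $H$ inside an auxiliary copy of $E$, obtaining a homotopy in $\Lambda_0(L \times \C^{m+k} \oplus E)$. Reorganizing this ambient bundle via the symplectic isomorphism $L \times \C^{m+k} \oplus E \cong \C^k \oplus (H \oplus H') \oplus (H \oplus H')^* \oplus E$ and recognizing $H \oplus H' = L \times \R^m$ as the constant horizontal, one identifies the resulting homotopy with $\R^{m+k} \oplus G \simeq \R^{m+k} \oplus V$ in $\Lambda_0(\C^{m+k} \oplus E)$, exactly the desired stable homotopy.

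The main technical obstacle is the bookkeeping in $(\Rightarrow)$: conceptually one is using the abelian group structure on $[L, \Lambda_0(\C^\infty)]$ induced by direct sum of Lagrangians, so that $[H' \oplus G] = [H'] + [G]$ and the vanishing of $[H' \oplus V]$ forces $g_\varphi = [G] - [V]$; however, executing the cancellation of $H'$ at the level of honest Lagrangian subbundles in specific symplectic bundles requires the careful reshuffling above. Everything else is routine given the independence of $g_\varphi$ from the auxiliary choices $H$ and $H'$ asserted in the excerpt.
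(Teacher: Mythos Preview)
Your proposal is correct and follows essentially the same strategy as the paper's proof. Both hinge on the same two observations: (i) $H$ and $V$ are homotopic as sections of $\Lambda_0(E)$ (you make the rotation via a compatible $J$ explicit, the paper leaves this implicit), so $H'\oplus V \simeq H'\oplus H = L\times \R^m$; and (ii) in the converse direction one cancels $H'$ by direct-summing with $H$ and using that $H\oplus H'$ is trivial. The only cosmetic difference is that for $(\Rightarrow)$ the paper goes directly from nullhomotopy of $g_\varphi$ to $H'\oplus G \simeq H'\oplus H$ and then sums with $H$, whereas you route through $\R^k\oplus H'\oplus G \simeq \R^k\oplus H'\oplus V$ first; after summing with $H$ both collapse to $\R^{m+k}\oplus G \simeq \R^{m+k}\oplus V$ by the same reshuffling $H'\oplus(H')^*\oplus H\oplus H^* \simeq (H'\oplus H)\oplus(H'\oplus H)^* \simeq L\times \C^m$.
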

\begin{proof}
If $G$ and $V$ are stably homotopic, then with the previous notations $\R^k\times (H'\oplus G)$ is homotopic to $\R^k\times (H'\oplus V)$ for some $k$,
which is further homotopic to $\R^k \times (H'\oplus H)=L\times\R^{k+m}$. Hence $g_\varphi$ is nullhomotopic already in $\Lambda_0(\C^{k+m})$ and thus also in $\Lambda_0(\C^\infty)$.

Conversely if $g_\varphi$ is nullhomotopic, then for some large enough $m$ and some complement $H'$ in $\R^m$, the subbundle $H'\oplus G$
of $L\times \C^m $ is homotopic to $H'\oplus H=L\times \R^m$. Hence $H\oplus H'\oplus G$ is homotopic to $H\oplus H'\oplus H$. But $H\oplus H'\simeq H'\oplus H=L\times \R^m$, so we obtain that $\R^m\times G$ is homotopic to $\R^m \times H$, which is further homotopic to $\R^m \times V$.
\end{proof}

We now define the notion of twisted generating function aimed at dealing with some Legendrian submanifolds with non trivial stable Gauss map.
The twisting will be done by families of non-degenerate quadratic forms, which we introduce first:

\begin{dfn}\label{dfn:monoid_Q}
For $n\in \N$, we define $\QQ_n$ to be the manifold of all non-degenerate quadratic forms $q$ on $\R^n$ with eigenvalues $\pm 1$, namely
\[q\circ u(x_1,\dots,x_n)=\sum_{i=1}^n \pm x_i^2\] for some linear isometry $u$ of $\R^n$. We also define the disjoint union
\[\QQ=\bigsqcup_{n\in \N} \QQ_n.\]
For the purpose of stabilization, we distinguish the element $h\in \QQ_2$ defined by
\[h(x_1,x_2)=2x_1x_2\]
(the factor $2$ is needed for $h$ to have eigenvalues $\pm 1$).

The direct sum $\oplus$ equips $\QQ$ with the structure of a unital monoid, with unit the unique quadratic form on $\R^0$.

The map $\dim : \QQ\to \N$ which maps $q\in \QQ_n$ to $n$ is a monoid map and thus $\N$ and $\Z$ inherit actions of $\QQ$: for $q\in \QQ$ and $n\in \Z$,
we write $n\cdot q=\dim(q) + n$.
\end{dfn}

\begin{rem}
The restriction to quadratic forms with eigenvalues $\pm 1$ is not essential as the bigger space of all non-degenerate quadratic forms
deformations retracts onto $\QQ$. However this slightly simplifies some constructions in Section~\ref{sec:doubling} where explicit bounds and cut-off functions are used.
\end{rem}

\begin{dfn}\label{dfn:tgf}
A \emph{twisted generating function} over a manifold $M$ consists of the following data:
\begin{itemize}
\item a directed open cover $(M_i)_{i\in I}$ of $M$ (see Definition \ref{dfn:directedcover}),
\item for each $i$, a generating function $(n_i, U_i, f_i)$ over $M_i$,
\item for each $i<j$, a smooth map $q_{ij}:M_{ij}\to \QQ$, defined on the intersection $M_{ij} = M_i \cap M_j$,
\end{itemize}
such that
\begin{itemize}
\item for all $i<j$, $U_j\cap (M_{ij}\times \R^{n_j})\subset (U_i\cap (M_{ij}\times \R^{n_i})) \times \R^{n_j-n_i}$ and
$f_i\oplus q_{ij}=f_j$ on $U_j\cap (M_{ij}\times \R^{n_j})$ (in particular, $\dim(q_{ij})=n_j-n_i$ at least
over the image of the projection $U_j\cap (M_{ij}\times \R^{n_j})\to M_{ij}$),
\item for all $i<j<k$, $q_{ij}\oplus q_{jk}=q_{ik}$ on the triple intersection $M_{ijk} = M_i \cap M_j \cap M_k$.
\end{itemize}
We abbreviate this data as a quadruple $(M_i,n_i,f_i,q_{ij})$.
\end{dfn}

Note that if $M_{ij}$ is empty, the datum of $q_{ij}$ is void and we allow the possibility that $i<j$ and $n_i>n_j$ (but see Definition~\ref{dfn:well-order}).
For each $i\in I$, we have a Legendrian immersion $i_{f_i}:\Sigma_{f_i}\to J^1 M_i$. For $i<j$, since $d q_{ij}$ vanishes
only at the origin ($q_{ij}$ is non-degenerate), the relation $f_i\oplus q_{ij}=f_j$ imposes the following equality of
subsets of $M_{ij}\times \R^{n_j}$:
\[\Sigma_{f_j}\cap (M_{ij}\times \R^{n_j})=(\Sigma_{f_i} \times 0)\cap U_j\cap (M_{ij}\times \R^{n_j}),\]
and the coincidence of the Legendrian immersions on this subset:
\[i_{f_j}=i_{f_i}\circ \pi_{ij}\]
where $\pi_{ij}:M_{ij}\times \R^{n_j}\to M_{ij}\times \R^{n_i}$ is the projection.
This implies that the pieces $\Sigma_{f_i}$ glue into an abstract manifold $\Sigma_f$ with a Legendrian
immersion $i_f:\Sigma_f\to J^1 M$. If the integers $n_i$ are bounded above by some integer $n$, then $\Sigma_f$ is naturally embedded
in $M\times \R^n$ in such a way that 
\[\Sigma_f\cap (M_i\times \R^n)=\Sigma_{f_i}\times 0\subset (M_i\times \R^{n_i})\times \R^{n-n_i}.\]

Note that $i_f$ may not be proper (for instance $U_2,\dots,U_k$ may be empty and only $\Sigma_{f_1}$ is non-empty)
but in our situation the Legendrian immersion will be prescribed and surjective over $M$ so this will force all $U_i$'s to be non-empty.

The geometrically meaningful datum associated to a twisted generating function is its equivalence class under the procedure of refinement which we now explain: if $((M_{i'})_{i'\in I'}, \sigma: I'\to I)$ is a refinement of $(M_i)_{i\in I}$ in the sense of Definition \ref{dfn:refinment_directed_cover}, then we may pull back
a twisted generating function over $(M_i)_{i\in I}$ to one over $(M_{i'})_{i'\in I'}$ by setting
for $i'\in I'$, $n_{i'}=n_{\sigma(i')}$, $U_{i'}=U_{\sigma(i')}\cap (M_{i'}\times \R^{n_{\sigma(i')}})$,
$f_{i'}=f_i$ restricted to $U_{i'}$ and for $i'<j'$, $q_{i'j'}=q_{\sigma(i')\sigma(j')}$ (where $q_{ii}$ is interpreted as the unit in $\QQ_0$) restricted to $M_{i'j'}$.
The manifold $\Sigma_{f'}$ coincides with $\Sigma_f$ and has the same Legendrian immersion
to $J^1 M$.

Refinement allows us to often restrict our attention to the following class of twisted generating functions, which will be particularly convenient to simplify various inductive arguments:
\begin{dfn}\label{dfn:well-order}
  A twisted generating function $(M_i,n_i,f_i,q_{ij})_{i \in I}$ is called well-ordered if $I$ is finite, the order on $I$ is total and $n_i\leq n_j$ when $i<j$.
\end{dfn}

\begin{lem} \label{lem:well-order}
  Any twisted generating function on a compact manifold $M$ can be refined and then the order can be changed to yield an equivalent well-ordered twisted generating function.
\end{lem}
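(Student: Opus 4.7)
The plan is to proceed in two stages: first refine to achieve a finite index set together with a disjointness property for incomparable indices, then extend the partial order to a total order that respects the integers $n_i$.

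For the refinement, I would use compactness of $M$ to extract a finite subcover of $(M_i)_{i\in I}$, indexed by some finite $I' \subset I$, and restrict all the twisted generating function data accordingly. If it is not automatic from Definition~\ref{dfn:directedcover} that the set $\{i \in I' : x \in M_i\}$ is totally ordered for every $x \in M$, I would further refine so that any two indices incomparable in the induced partial order correspond to disjoint open sets. This step uses the normality of the compact Hausdorff manifold $M$ together with the standard shrinking lemma, applied inductively on the size of antichains in $I'$, possibly at the cost of introducing new indices labelled by maximal chains. The cocycle relations on the finer cover are just restrictions of the original ones and continue to hold.

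Given the finite refined index set $I'$ with its induced partial order, I would then extend to a total order respecting $n$ as follows. Note that the definition of twisted generating function forces $n_i \leq n_j$ whenever $i < j$ in the original order, since $\QQ_{n_j - n_i}$ is only defined for $n_j \geq n_i$. Define a relation $\preceq$ on $I'$ by: $i \preceq j$ iff $i \leq j$ in the original order or $n_i < n_j$. A short case analysis shows that $\preceq$ is itself a partial order extending the original one, and any linear extension of $\preceq$ (available for any finite poset by topological sort) gives a total order in which $n_i \leq n_j$ whenever $i < j$. For pairs $i < j$ in this total extension that were originally incomparable, the required cocycle $q_{ij}$ has empty domain by the disjointness arranged above, so it exists trivially and the cocycle relations $f_i \oplus q_{ij} = f_j$ and $q_{ij} \oplus q_{jk} = q_{ik}$ are vacuously satisfied; for previously comparable pairs, the data are unchanged, so the same relations as before hold.

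The main obstacle is the disjointness refinement in the first step: depending on the precise definition of a directed open cover, this may be immediate or may require some technical attention to shrink incomparable intersections without destroying the covering property. The combinatorial order-change in the second step is then straightforward.
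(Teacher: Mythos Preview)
Your approach is correct and in fact more direct than the paper's. The paper first invokes Lemma~\ref{lem:total-order}, which refines via a triangulation to obtain a totally ordered index set, then passes to a finite subcover, and only afterwards reorders by the integers $n_i$ (defining $i<'j$ if $n_i<n_j$, or $n_i=n_j$ and $i<j$). You bypass the triangulation step entirely by extending the partial order directly; this works because the disjointness you were worried about is automatic: Definition~\ref{dfn:directedcover} requires $I_x=\{i: x\in M_i\}$ to be totally ordered for every $x$, so incomparable indices always index disjoint open sets, and no further refinement is needed after passing to a finite subcover. Your relation $\preceq$ and its linear extension then do in one step what the paper does in two.

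One small point to watch: your claim that $n_i\le n_j$ whenever $i<j$ relies on reading the codomain $\QQ_{n_j-n_i}$ as undefined for negative arguments. The paper instead allows $i<j$ with $n_i>n_j$ provided $M_{ij}=\varnothing$ (which is forced, since there is no map into an empty space). Under that reading your $\preceq$ could fail antisymmetry. The fix is immediate: such a relation $i<j$ carries no data and can be dropped before forming $\preceq$, or equivalently one observes that any violation of antisymmetry involves a pair with empty intersection and hence can be broken arbitrarily. Equivalence with the original twisted generating function then follows, as in the paper, because the original partial order is a common coarsening of both orders and already supports all the data.
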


\begin{proof}
  Lemma~\ref{lem:total-order} shows that we can refine any twisted generating function to have the order on $I$ total, by compactness we can assume $I$ is finite. It is then clear  that $M_i \cap M_j=\varnothing$ if $n_i>n_j$ and $i<j$;  hence we can define a new total
order $\leq'$ by: $i<'j$ if $n_i<n_j$ or if $n_i=n_j$ and $i < j$. This new order is compatible in the sense that it makes $(M_i,q_{ij},f_i)_{i \in I}$ a twisted generating function. Both orders on $I$ have a common partial order which also makes it a twisted generating function - hence they are equivalent.
\end{proof}


It is classical in the theory of generating functions to stabilize by adding a fiberwise non-degenerate quadratic
form. The novelty here is that we use a $1$-cocycle of such quadratic forms to stabilize.
In the language of the Appendix, the directed open cover $(M_i)_{i\in I}$ together with the integers $n_i$ and
quadratic forms $q_{ij}$ define a $\QQ$-twisted map from $M$ to $\Z$. As we explain in Appendix~\ref{app:monoids-and-bundles},
the set of $\QQ$-twisted maps from $M$ to $\Z$ up to the appropriate equivalence relation
is in bijection with homotopy classes of maps from $M$ to a classifying space $|B(\Z,\QQ)|$ (see Proposition~\ref{prop:QF-bundles}).
In Subsection~\ref{subsec:girouxbott}, we will construct a homotopy equivalence 
\begin{equation}\label{eq:bottequivalence}
|B(\Z,\QQ)|\simeq \Lambda_0(\C^\infty).
\end{equation}

\begin{dfn}
We say that a Legendrian immersion $\varphi\times z \colon L \to J^1 M$ \emph{admits a twisted generating function
with twisting classified by} a map $h:M\to |B(\Z,\QQ)|$ if there exists a twisted generating function
$(M_i,n_i,f_i,q_{ij})$ and a diffeomorphism $\psi:L\to \Sigma_f$ such that $i_f\circ \psi=\varphi\times z$
and the $\QQ$-twisted map from $M$ to $\Z$ given by $(M_i,n_i,q_{ij})$ is classified by $h$.
\end{dfn}

We may now state the main result of this section which is a slightly more precise version of Theorem~\ref{thmintro:twistedgirouxlatour}.
\begin{thm} \label{thm:twistedgirouxlatour}
Let $M$ and $L$ be closed manifolds. A Legendrian immersion $\varphi\times z \colon L\to J^1 M$ admits a
twisted generating function with twisting classified by a map $h:M\to |B(\Z,\QQ)|$ if and only if
$g_\varphi$ is homotopic to $h\circ\pi$ (under the homotopy equivalence \eqref{eq:bottequivalence}).
\end{thm}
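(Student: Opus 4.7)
The forward direction amounts to a computation of the stable Gauss map on each patch. Given a twisted generating function $(M_i,n_i,f_i,q_{ij})$, I would first analyze each piece $\Sigma_{f_i}\to J^1 M_i$ coming from the ordinary generating function $f_i$. The non-degeneracy of the fiberwise Hessian of $f_i$ along $\Sigma_{f_i}$ provides a canonical stable isomorphism identifying the Gauss section $G$ with the vertical section $V$ after stabilization by the $n_i$-dimensional fibre. In the framework of the Lagrangian Grassmannian this expresses $g_\varphi$ restricted to $\Sigma_{f_i}$ as homotopic to the constant $n_i$ (viewed in $\Z\hookrightarrow |B(\Z,\QQ)|\simeq \Lambda_0(\C^\infty)$) composed with $\pi$. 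The identity $f_i\oplus q_{ij}=f_j$ on overlaps means precisely that the local trivializations of the difference $G-V$ are patched by the cocycle $q_{ij}$. Consequently the $\QQ$-twisted map $(M_i,n_i,q_{ij})$ classifies $g_\varphi$ up to precomposition with $\pi$; by definition this classifying map is $h$, so $g_\varphi\simeq h\circ \pi$.

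For the converse, assume $g_\varphi \simeq h\circ \pi$. The plan is to produce a twisted generating function patch by patch using the untwisted theorem of Giroux and Latour. First, by Proposition~\ref{prop:QF-bundles} and the equivalence $|B(\Z,\QQ)|\simeq \Lambda_0(\C^\infty)$, the map $h$ is realized by a concrete $\QQ$-twisted map given by an open cover $(M_i)_{i\in I}$, integers $n_i$, and transition quadratic forms $q_{ij}$. After refining and applying Lemma~\ref{lem:well-order} I may assume this datum is well-ordered and each $M_i$ is small enough that $h|_{M_i}$ is homotopic to the constant $n_i$. Over each $M_i$ the hypothesis then implies that the stable Gauss map of the restricted Legendrian $(\varphi\times z)^{-1}(J^1 M_i)\to J^1 M_i$ is null-homotopic, so by Theorem~\ref{thm:girouxlatour} it admits an ordinary generating function $f_i$ on some $M_i\times \R^{m_i}$. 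Fiberwise stabilization by a suitable non-degenerate quadratic form allows me to arrange $m_i=n_i$.

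The core of the argument is to inductively adjust the $f_i$ so that on each overlap $M_{ij}$ the strict equality $f_i\oplus q_{ij}=f_j$ holds with the prescribed $q_{ij}$. Proceeding in the order given by Lemma~\ref{lem:well-order}, suppose $f_1,\ldots,f_{i-1}$ already satisfy all required relations. A parametric version of the uniqueness of generating functions says that any two generating functions for the same Legendrian become equivalent after fiberwise stabilization by a non-degenerate quadratic form and a fibre-preserving diffeomorphism. Applied over each overlap $M_{ij}$ with $j<i$, this produces a quadratic form which a priori agrees with $q_{ij}$ only up to homotopy in $\QQ_{n_i-n_j}$. Using the freedom to pull back $f_i$ by a fibre-preserving isotopy and to further refine the cover, I would deform this actual quadratic form to match the prescribed $q_{ij}$, compatibly on triple overlaps where the cocycle condition $q_{ij}\oplus q_{jk}=q_{ik}$ is already built into the $\QQ$-twisted structure.

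The main difficulty is precisely this last step: matching the quadratic forms produced by the uniqueness theorem with the specified cocycle while simultaneously preserving the relations from earlier stages of the induction. The obstruction to such matchings lives in relative homotopy groups of the spaces $\QQ_k$ and is killed by the chosen homotopy $g_\varphi\simeq h\circ\pi$; organizing this into a clean inductive argument, with the appropriate refinements at each step and careful bookkeeping of the classifying data from Appendix~\ref{app:monoids-and-bundles}, is the technically delicate part. The finite, well-ordered nature of the refined cover is essential so that the induction terminates with all strict equalities in place.
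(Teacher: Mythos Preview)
Your forward direction is essentially correct in spirit and matches the paper's approach, though the paper makes this precise via a commutative diagram of simplicial spaces mapping $MV(L_\bullet)$ into $B(\Lambda^{i\R^\infty}(\C^\infty),\QQ)$ and using Theorem~\ref{thm:girouxbott}.

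Your converse, however, has genuine gaps and diverges substantially from the paper's strategy. First, you invoke Theorem~\ref{thm:girouxlatour} over each $M_i$, but the Legendrian over $M_i$ is $L_i=\pi^{-1}(M_i)$, which is not compact; as the paper explicitly remarks, compactness of $L$ is crucial in the Giroux--Latour theorem, so it does not apply patchwise. Second, the ``parametric uniqueness of generating functions'' you appeal to is not available at this level of generality: the known uniqueness theorems (Viterbo, Th\'eret) require generating functions quadratic at infinity, whereas the functions produced by Giroux--Latour have no behavior imposed at infinity. Without such a uniqueness statement there is no mechanism to force $f_i\oplus q_{ij}=f_j$ on overlaps, even up to fiber-preserving diffeomorphism. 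You yourself flag the matching step as ``technically delicate'' without resolving it; in fact it is the entire content of the proof.

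The paper avoids these problems by working \emph{infinitesimally} first. Rather than building full generating functions on patches and then reconciling them, it constructs a $\QQ$-twisted lift $(\phi_i,q_{ij})$ of the Gauss section $G$ to $\Lambda^V(E)$ directly: one first represents $h$ by a twisted section of $\Lambda^{i\R^m}(\C^m)$ over $M$, pulls back to $L$, and then uses the key $\QQ$-equivariant homotopy lifting property (Lemma~\ref{lem:chekanovlinear}) to correct the symplectic reduction so that it equals $G$ while preserving the relations $\phi_i\oplus q_{ij}=\phi_j$ strictly. A second step makes these plane fields compatible with a fixed embedding $L\hookrightarrow M\times\R^k$ via an explicit action of a group $P^m$. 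Only then, in a third step, are the plane fields integrated to actual germs of functions using a relative Weinstein neighborhood lemma. The $\QQ$-equivariance of the Chekanov-type operator $\Theta$ is the technical heart that replaces your hoped-for uniqueness/matching argument.
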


The proof of Theorem~\ref{thm:twistedgirouxlatour} will be given in Subsection~\ref{subsec:existencetgf}.

\begin{rem}
Compared to Theorem~\ref{thm:girouxlatour}, Theorem~\ref{thm:twistedgirouxlatour} has the additional
hypothesis that $M$ is closed, this is certainly not essential but makes the proof easier.
However compactness of $L$ is crucial in both results.
\end{rem}

One can recover Theorem~\ref{thm:girouxlatour} from the above theorem
applied with $h$ a constant map, together with the fact that a twisted generating function with trivial twisting (i.e. classified by a nullhomotopic map)
on a compact manifold  can be turned into a genuine generating function. We do not prove this claim
as we do not need it but the following result (which we need elsewhere anyway) is a step in this direction. 
For its statement, we use the notion of negligible shrinking as in Definition \ref{neg-shrink}, and in its proof, we use germs of smooth function as explained in Remark~\ref{rem:smoothextension} and Remark~\ref{warning}.

\begin{lem}\label{lem:trivialQbundle} 
  Let $(n_i,q_{ij})$ be a smooth well-ordered $\QQ$-twisted map from $M$ to $\N$ defined on a cover $(M_i)_{i\in \{1,\dots,k\}}$. Assume that  that $n_j-n_i$ is even for all $i<j$, and that there exist
smooth families of maps $\{q_{ij}^t\}_{i < j}$ parametrised by $t\in [0,1]$, such that the homotopies $q_{ij}^t$'s define a $\QQ$-bundle for
  all $t$ and satisfy the boundary conditions 
\[q_{ij}^0 = q_{ij}, \quad q_{ij}^1 = h^{\tfrac{n_j-n_i}{2}}\]
(recall $h(x,y) = 2xy$).
Then up to negligible shrinking there exist smooth maps $Q_i\colon M_i\to \QQ$, $1\leq i \leq k$,
  such that $q_{ij} \oplus Q_j = Q_i$ on $M_{ij}$ for all $i<j$. 
\end{lem}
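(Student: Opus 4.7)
The plan is to exploit the given homotopy $q_{ij}^t$ to reduce to the standard hyperbolic cocycle, which carries a manifest trivialization that can then be transported back to $t=0$. Choose an even integer $N \geq n_k$ with $N - n_k$ even (possible since all $n_j - n_i$ are even, so the $n_i$ share a common parity), and set $P_i := h^{(N-n_i)/2} \in \QQ_{N-n_i}$. The key point is that the hyperbolic cocycle $q_{ij}^1 = h^{(n_j-n_i)/2}$ admits the constant trivialization $q_{ij}^1 \oplus P_j = P_i$. I will construct smooth families $Q_i^t \colon M_i \to \QQ_{N-n_i}$ for $t \in [0,1]$ satisfying $Q_i^1 \equiv P_i$ and $q_{ij}^t \oplus Q_j^t = Q_i^t$ on a nested shrinking of the overlaps $M_{ij}$; the desired maps are then $Q_i := Q_i^0$, on the resulting negligibly shrunk cover.

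Proceed by reverse induction on $i$, starting from $Q_k^t \equiv P_k$. Assuming $Q_j^t$ has been built for all $j > i$, on the overlap $M_i \cap M_{>i}$ with $M_{>i} := \bigcup_{j>i} M_j$, the coboundary condition forces $\tilde Q_i^t(x) := q_{ij}^t(x) \oplus Q_j^t(x)$ for $x \in M_{ij}$. The cocycle identity $q_{ij}^t \oplus q_{jl}^t = q_{il}^t$ combined with the inductive relation $q_{jl}^t \oplus Q_l^t = Q_j^t$ shows that $\tilde Q_i^t$ is well-defined on triple overlaps (once these lie inside the region controlled by the inductive hypothesis). The crucial feature is that $\tilde Q_i^t$ collapses to a constant at $t=1$: on each $M_{ij}$,
\[\tilde Q_i^1 = h^{(n_j-n_i)/2} \oplus h^{(N-n_j)/2} = P_i.\]

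To extend $\tilde Q_i^t$ from the overlap to all of $M_i$ while enforcing $Q_i^1 \equiv P_i$, choose an open set $V_i$ with $\overline{V_i} \subset M_i \cap M_{>i}$ together with a smooth bump $\rho_i \colon M_i \to [0,1]$ that vanishes on $V_i$ and equals $1$ on an open neighborhood of $M_i \setminus (M_i \cap M_{>i})$, and define
\[ Q_i^t(x) = \begin{cases} \tilde Q_i^{\,t + \rho_i(x)(1-t)}(x), & x \in M_i \cap M_{>i}, \\ P_i, & x \notin M_i \cap M_{>i}. \end{cases}\]
Where $\rho_i(x) = 1$, the rescaled time equals $1$, so the first branch evaluates to $\tilde Q_i^1 = P_i$ and glues smoothly to the constant second branch; where $\rho_i(x) = 0$, we have $Q_i^t = \tilde Q_i^t$, so the coboundary relation $q_{ij}^t \oplus Q_j^t = Q_i^t$ holds on $V_i \cap M_{ij}$ and is available for the next inductive step.

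The principal technical obstacle is coordinating the shrinkings across the $k$ inductive steps so that a single negligibly shrunk cover witnesses all coboundary relations simultaneously. I handle this by fixing at the outset a strictly decreasing family of $k$ open refinements $M_i \supset M_i^{(1)} \supset \cdots \supset M_i^{(k)}$ of the cover (each still covering $M$, with $\overline{M_i^{(\ell+1)}} \subset M_i^{(\ell)}$), and performing the $i$-th inductive step at the level where the inductive consistency on triple overlaps $M_{ijl}$ has already been secured by earlier steps; concretely, $V_i$ and $\rho_i$ are arranged so that the cascade of coboundary relations lands, at the final stage, on the common refinement $(M_i^{(k)})$. This is the negligibly shrunk cover on which $Q_i := Q_i^0$ satisfies the conclusion of the lemma.
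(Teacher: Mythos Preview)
Your proof is correct and follows essentially the same approach as the paper: downward induction on $i$, using the constant trivialization $h^{(n_k-n_i)/2}$ at $t=1$ as an anchor and a time-reparametrization toward $t=1$ to extend from the overlap region to all of $M_i$. The only cosmetic differences are that the paper works with germs on $\Op(\overline{M_i})$ (which absorbs your nested-refinement bookkeeping) and phrases the extension as composition with the flow of a cut-off of $-\partial_t$ rather than via your explicit formula $t \mapsto t + \rho_i(x)(1-t)$.
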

\begin{proof}
Up to reparametrizing the homotopy $q_{ij}^t$ we can assume that $q_{ij}^t=h^{\frac{n_j-n_i}{2}}$ for $t$ near $1$.
We set
  $n'_i = (n_k -n_i)/2$. Up to negligible shrinking we may assume that the
  $q_{ij}^t$'s are defined on $\Op(\overline{M}_{ij})$ (we recall that the notation $\Op$ refers to an open neighbourhood).
We will define smooth maps $Q_i^t\colon\Op(\overline{M_i})\to \QQ$
  for all $t\in [0,1]$, such that $Q_i^t=h^{n'_i}$ for $t$ near $1$. We do this downwards
  inductively and define $Q_i^t$ starting with $Q_k^t=0$ and thus define $Q_i^t$
  assuming that all $Q_j^t$ for $j>i$ are already defined. Consider the closed
  subset
  \begin{align*}
    M_i' = \bigcup_{j>i} \overline{M}_{ij} \times [0,1] \cup \overline{M_i} \times \{1\} \subset \overline{M_i} \times [0,1] .
  \end{align*}
  On a neighborhood of $M'_i$ we already know what $Q_i^t$ is (for the equations
  $q_{ij}^t \oplus Q_j^t = Q_i^t$ for all $t$ and $Q_i^t = h^{n'_i}$ for $t$ near $1$ to hold).
By flowing an appropriate cut-off of the vector field $-\frac{\del}{\del t}$
which vanishes near $M'_i$, we can now construct a smooth map $\phi:M\times[0,1]\to M\times [0,1]$ which is the identity on $\Op(M'_i)$
and which takes $\overline{M_i}\times[0,1]$ into $\Op(M_i')$ (the two open neighbourhoods appearing here are nested) and we extend $Q_i$ to $\Op(\overline{M_i}\times[0,1])$
by setting $Q_i=Q_i\circ\phi$.
\end{proof}

\subsection{Symplectic reduction and Bott periodicity}\label{subsec:girouxbott}

Our main goal here is to explain the homotopy equivalence \eqref{eq:bottequivalence}.
This is essentially a delooping of the Bott periodicity homotopy equivalence 
\begin{equation}\label{eq:bott}
\Omega (U/O) \simeq \Z\times BO.
\end{equation}
We follow Giroux's proof of \eqref{eq:bott} which is implicit in \cite{Giroux1990}. This consists in a careful
study of the linear algebra underlying the theory of generating functions. The main step is a Serre fibration
statement reminiscent of the homotopy lifting property of generating functions. Our main observation is that
this Serre fibration property holds equivariantly with respect to the action of $\QQ$, eventually leading to \eqref{eq:bottequivalence}.

We consider here a symplectic vector space $E$ and a Lagrangian subspace $V$. Recall that $\QQ$
forms a unital monoid in the category of smooth manifolds (or topological spaces) under the direct sum operation
with unit given by the unique element of $\QQ_0$.

We shall presently construct a bimodule $\Lambda^V(E)$ over $\QQ$ given by the Grassmannian of Lagrangians which are stably transverse to a fixed Lagrangian $V$ in the symplectic vector space $E$. To this end, we first let $\Lambda_{n}(E)$ denote the Grassmannian of Lagrangian linear subspaces of $E \times \C^n$, which are transverse to $E \times \R^n$, and we denote their disjoint union by

\begin{equation}
  \Lambda(E) \coloneqq \bigsqcup_{n = 0}^{\infty} \Lambda_n(E).
\end{equation}
Note that, if $0$ is the trivial  vector space, then $\Lambda(0)$ is the disjoint union of the Grassmannians of Lagrangian subspaces in $\C^n$ which are transverse to $\R^n$, and the direct sum of Lagrangian subspaces makes $\Lambda(0)$ into a monoid. Moreover, $\Lambda(E)$ is a bimodule over $\Lambda(0)$, in the sense that there are commuting left and right actions of $\Lambda(0)$ on $\Lambda(E)$, which are induced by the natural isomorphisms of symplectic vector spaces
\begin{align}
  (E \oplus \C^n) \oplus \C^k & \cong E \oplus ( \C^n \oplus \C^k) \cong E \oplus \C^{n+k} \\
  \C^k \oplus (E \oplus \C^n) & \cong E \oplus (\C^k \oplus \C^n) \cong  E \oplus \C^{k+n}.
\end{align}
The reason for imposing the transversality restriction above is that $E \times \R^n$ is a coisotropic subspace of $E \times \C^n$, and that the reduction of $E \times \C^n$ along this subspace is naturally identified with $E$. We thus obtain a map
\begin{equation}
    \rho \co \Lambda(E) \to \Lambda_0(E),
\end{equation}
whose components we denote $\rho_n$. This symplectic reduction map is invariant under both the left and the right action of $\Lambda(0)$.

To relate $\Lambda(E)$ to $\QQ$, observe that we have a natural map
\begin{equation} \label{eq:graph_of_quadratic}
  \QQ_k  \to \Lambda_k(0)
\end{equation}
which maps a quadratic form $q$ to the graph of its differential in $\R^k\times i\R^k=\C^k$, where we identify $i \R^k$ with the dual space of $\R^k$ using the standard metric. This map is well-defined because this graph is transverse to $\R^k$ (it is also transverse to $i \R^k$, which will be used later). This map intertwines the direct sum of quadratic forms with the direct sum of Lagrangian subspaces, which, by the above discussion, makes $\Lambda(E)$ into a bimodule over $\QQ$.  For $q\in \QQ$ and $\phi \in \Lambda(E)$, we denote the action by
\begin{align}
\QQ  \times \Lambda(E) \times \QQ & \to \Lambda(E) \\
  (q', \phi, q) & \mapsto q' \cdot \phi \cdot q.
\end{align}

Returning to our choice of Lagrangian subspace $V$ of $E$, we define
\begin{equation}
  \Lambda^{V}_{n}(E) \subset \Lambda_{n}(E) 
\end{equation}
to be the open subsets of linear Lagrangian subspaces that are transverse to $V \oplus i \R^n$, and denote by
\begin{equation}
     \Lambda^{V}(E) \equiv \bigsqcup_{n = 0}^{\infty} \Lambda^V_n(E) \subset \Lambda(E)
\end{equation}
their disjoint union. In the case of the trivial space $E = \{ 0 \}$, the fact that every Lagrangian which is transverse to $\R^n$ and to $i \R^n$ can be uniquely expressed as the graph of a non-degenerate quadratic form implies that Equation \eqref{eq:graph_of_quadratic} factors as a map
\begin{equation} \label{eq:nearby_twisted:1}
   \QQ_k  \to \Lambda^0_k(0) \to \Lambda_k(0),
\end{equation}
where the first map is a homotopy equivalence of monoids. Since the direct sum of Lagrangians preserves transversality, we conclude that the inclusion of $\Lambda^{V}(E)$ in $ \Lambda(E)$ is an inclusion of $\QQ$-bimodules, i.e. that $\Lambda^{V}(E) $ inherits commuting right and left actions of $\QQ$:
\begin{equation}
\QQ  \times \Lambda^V(E) \times \QQ  \to \Lambda^V(E).
\end{equation}

Restricting the symplectic reduction map $\rho$ to Lagrangians stably transverse to $V$, we obtain a map that we denote
\begin{equation} \label{eq:symplectic_reduction}
    \rho:\Lambda^V(E)\to \Lambda_{0}(E).
\end{equation}
\begin{lem}\label{lem:fiber}
If a Lagrangian $H\in \Lambda_0(E)$ is transverse to $V$, the $\QQ$-equivariant map $\QQ\to \rho^{-1}(H)$
defined by $q\mapsto q \cdot H$ is a homotopy equivalence of right $\QQ$-spaces.
\end{lem}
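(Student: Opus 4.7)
The plan is to describe the fiber $\rho^{-1}(H)\cap\Lambda^V_n(E)$ explicitly in each degree $n$ using the linear algebra of symmetric bilinear forms, and then to construct a $\QQ$-equivariant strong deformation retract onto the image of $q\mapsto q\cdot H$. Using that $H$ is transverse to $V$, I first identify $E=H\oplus V$ with $V\cong H^*$ via the symplectic pairing, so that $E\oplus\C^n$ splits as the direct sum of complementary Lagrangians $(H\oplus\R^n)\oplus(V\oplus i\R^n)$. Since every $L\in\Lambda^V_n(E)$ is transverse to $V\oplus i\R^n$, it is uniquely the graph of a self-adjoint linear map $H\oplus\R^n\to V\oplus i\R^n\cong(H\oplus\R^n)^*$, encoded by a symmetric block matrix
\[\begin{pmatrix} A & B \\ B^T & C \end{pmatrix}\]
with $A\colon H\to H^*$ and $C\colon\R^n\to(\R^n)^*$ symmetric and $B\colon\R^n\to H^*$ arbitrary.

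Next I decode the remaining two conditions. Transversality of $L$ to $E\oplus\R^n$ is equivalent to the non-degeneracy of $C$, since the intersection is parametrized by solving $B^Th+Cr=0$ for $r$; thus $C\in\QQ_n$. The intersection then projects to the graph of $A-BC^{-1}B^T$ in $H\oplus H^*=E$, so the reduction equals $H$ precisely when $A=BC^{-1}B^T$. This produces a diffeomorphism $\rho^{-1}(H)\cap\Lambda^V_n(E)\cong \Hom(\R^n,H^*)\times\QQ_n$ via $L\mapsto(B,C)$, with $A$ determined. Using the swap $\C^n\oplus E\cong E\oplus\C^n$ to identify the left and right actions of $\QQ$ on $H\in\Lambda_0(E)$, the element $q\cdot H$ becomes $H\oplus\mathrm{graph}(dq)$, which in the above coordinates is precisely the section $q\mapsto(0,q)$.

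Finally I define the deformation $F_t(B,C)=((1-t)B,C)$. It stays within $\rho^{-1}(H)$ because the constraint $A=BC^{-1}B^T$ rescales consistently (the associated $A$ becomes $(1-t)^2 BC^{-1}B^T$), and at $t=1$ it lands on the section $(0,C)$, so it is a strong deformation retract onto the image of $\QQ\to\rho^{-1}(H)$. The right action of $q'\in\QQ_k$ on $(B,C)$ sends it to $((B,0),C\oplus q')$ in $\Hom(\R^{n+k},H^*)\times\QQ_{n+k}$, where $(B,0)$ is the extension of $B$ by zero on the $\R^k$ summand; since the rescaling of $B$ commutes with extension by zero, $F_t$ is $\QQ$-equivariant. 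The main point requiring care is the bookkeeping of the natural isomorphisms underlying the $\QQ$-bimodule structure on $\Lambda(E)$, so that the left action $q\cdot H$ is correctly matched with the pair $(0,q)$; once this is done, the lemma reduces to the elementary linear algebra above.
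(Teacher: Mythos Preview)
Your argument is correct and is essentially the same as the paper's, just phrased in block-matrix rather than quadratic-form language: the paper writes each $\phi\in\rho^{-1}(H)\cap\Lambda_n^V(E)$ as a quadratic form $\phi(h,v)$ on $H\times\R^n$ and retracts via $\phi_t(h,v)=\phi(t\cdot h,v)$, which in your coordinates is exactly the rescaling $(B,C)\mapsto(tB,C)$ (with $A=t^2BC^{-1}B^T$), landing at $t=0$ on $(0,C)$. Your explicit verification that $\rho(L)=H$ forces $A=BC^{-1}B^T$ and your equivariance check are the unpacked versions of the paper's one-line observations; note also that since $H\in\Lambda_0(E)$ has no $\C^n$ factor to swap past, $q\cdot H$ and $H\cdot q$ coincide, so the bookkeeping you flag as delicate is in fact trivial here.
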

\begin{proof}
  As the first map in Equation~(\ref{eq:nearby_twisted:1}) is a homotopy equivalence of monoids we may prove the lemma for $\Lambda^0(0)$ instead of $\QQ$.
  
  The symplectic form on $E$ identifies $V$ with the linear dual $H^*$ and $E$ with $H\oplus H^*$, hence an element
$\phi\in \Lambda_n^V(E)$ can be represented uniquely as a quadratic form $\phi$
on $H\times \R^n$. We set $\phi_t(h,v)=\phi(t\cdot h,v)$. If $\rho(\phi)=H$,
then for all $t\in[0,1]$, $\rho(\phi_t)=H$ and $\phi_0(h,v)=\phi(0,v)$, which represents an element in the image of the action $\Lambda^0(0) \to \rho^{-1}(H)$. Observe that if $q'\in \Lambda^0(0)$ then $(\phi\cdot q')_t=\phi_t\cdot q'$, hence we have constructed the required $\Lambda^0(0)$-equivariant deformation retraction onto a copy of $\Lambda^0(0)$.
\end{proof}

The theory of generating functions starts in earnest with the result that the map $\rho$ from Equation \eqref{eq:symplectic_reduction} behaves like a fibration after sufficient stabilizations by the quadratic form $h$. We observe here that this fibration property also holds $\QQ$-equivariantly. One can state such a result in terms of a homotopy lifting property for maps of compact $CW$-complexes into $\Lambda_{0}(E)$, but since our desired application requires this lift to be smooth, we consider a compact smooth manifold $L$ parametrizing smooth paths of Lagrangians in $E$, i.e. equipped with a smooth map
\begin{equation}
   g\colon L\times [0,1]\to \Lambda_{0}(E).
\end{equation}
Since the projection from $\Lambda^V(E)$ to $ \Lambda_{0}(E)$ is a  smooth submersion, the pullback is then a smooth manifold over $L \times [0,1]$, which we denote $ g^*\Lambda^V(E)$. Letting $g_0$ denote the restriction to $L \times \{0\}$, we have the following result, which asserts that the failure of $\rho$ to be a fibration is resolved by stabilization:
\begin{lem}\label{lem:chekanovlinear}
If $N\in \N$ is sufficiently large, there exists a smooth map
\begin{equation}
\Theta\colon g_0^*\Lambda^V(E)\times [0,1]\to g^*\Lambda^V(E)     
\end{equation}
fibered over $L\times [0,1]$ such that
\begin{itemize}
\item$\Theta(\phi,0)=h^N\cdot \phi$,
\item $\Theta$ is (right) $\QQ$-equivariant, in the sense that, for all $q\in \QQ$,
  $\phi \in g_0^*\Lambda^V(E)$ and $t\in[0,1]$ we have
  \begin{equation}
 \Theta(\phi \cdot q,t)=\Theta(\phi,t)\cdot q.  
  \end{equation}  
\end{itemize}
\end{lem}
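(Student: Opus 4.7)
The plan is to reduce the lemma to explicit linear algebra via the symmetric-form parametrisation of $\Lambda^V(E)$. Fix a Lagrangian complement $H$ of $V$ in $E$, identifying $E$ with $H\oplus H^*$. An element of $\Lambda^V_n(E)$, being a Lagrangian in $(H\oplus\R^n)\oplus(H^*\oplus i\R^n)$ transverse to $H^*\oplus i\R^n$, is the graph of a unique symmetric bilinear form $A$ on $H\oplus\R^n$, which identifies $\Lambda^V_n(E)$ with an open subset of the vector space $\mathrm{Sym}(H\oplus\R^n)$. In this picture, the right $\QQ$-action by $q\in\QQ_k$ is the block sum $A\mapsto A\oplus q$, stabilization by $h^N$ on the left is the analogous operation on the opposite fiber block, and $\rho(A)$ is the Schur complement $A_{HH}-A_{Hn}A_{nn}^{-1}A_{nH}$ on $H$ when $A|_{\R^n\times\R^n}$ is invertible; in particular $\rho$ is a smooth submersion.

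I would establish the lemma first locally. The implicit function theorem applied to $\rho$ produces, for any $\phi_0\in\Lambda^V(E)$, a smooth local section $s$ defined on a neighborhood of $\rho(\phi_0)$ with $s(\rho(\phi_0))=\phi_0$. By constructing $s$ along a slice transverse to the $\QQ$-orbit through $\phi_0$ and then extending $\QQ$-equivariantly, one arranges $s(\rho(\phi_0\cdot q))=\phi_0\cdot q$ for all $q\in\QQ$. Applied pointwise to the family $g$ at the initial lift $\phi_0(l)$, this already yields a smooth $\QQ$-equivariant lift of $g$ on some open neighborhood of $L\times\{0\}$.

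The main step, which forces the stabilization by $h^N$, is the extension of this lift to all of $L\times[0,1]$. I would argue by induction on an ordering $0=t_0<t_1<\dots<t_k=1$ chosen finely enough (using compactness of $L\times[0,1]$) that each strip $L\times[t_{i-1},t_i]$ is covered by a single local lift of $g$. At step $i$, the already-built lift on $L\times[0,t_{i-1}]$ and the new local lift on $L\times[t_{i-1},t_i]$ disagree over $L\times\{t_{i-1}\}$ by an element of the fiber of $\rho$, which by Lemma~\ref{lem:fiber} is homotopy equivalent to $\QQ$. Stabilizing both lifts by enough copies of $h$ brings this discrepancy into a single connected component of the fiber, where it can be killed by a smooth path through lifts; a partition of unity in $t$ then glues the extended lift to the new local one, all within $\Lambda^V_{n+2N}(E)$. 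The main obstacle is the uniform choice of $N$: one must ensure that a single stabilization exponent suffices to merge all pairwise discrepancies simultaneously over all $l\in L$ and all stages $i$, and it is precisely compactness of $L\times[0,1]$ that makes this possible. The $\QQ$-equivariance survives the entire construction because every operation above is performed in the $H\oplus\R^{n+2N}$ block and commutes with direct summation of $q\in\QQ$ in a further fiber block.
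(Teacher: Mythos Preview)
Your patching scheme has a real gap. Lemma~\ref{lem:fiber} identifies the fibre of $\rho$ over a Lagrangian transverse to $V$ with $\QQ_n$ up to homotopy, and $\QQ_n$ has $n+1$ connected components labelled by the negative index; left multiplication by $h^N$ shifts every index by $N$ and never merges components. So the assertion that stabilising by enough copies of $h$ ``brings this discrepancy into a single connected component of the fibre'' is simply false as written. A second, related problem is non-compactness of the source: each fibre $\rho_n^{-1}(g_0(\ell))$ is an open subset of the space of symmetric forms on $H\oplus\R^n$, so implicit-function sections through the various $\phi$ have no uniform domain, and nothing guarantees your local lift exists on a strip $L\times[0,\varepsilon]$ simultaneously for all $\phi\in g_0^*\Lambda^V(E)$. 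The underlying obstacle is that $\rho$ is a submersion but not proper: a horizontal lift can lose transversality to $V\oplus i\R^n$ and exit $\Lambda^V_n(E)$ altogether. Your stabilisation is aimed at the wrong obstruction and does not prevent this.

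The paper avoids patching by an explicit global formula. After identifying $(E,V)\cong(\C^m,i\R^m)$, one lifts the path $g_t$ in $\Lambda_0(\C^m)$ to a path $\theta_t$ in $\Sp_{2m}$ with $\theta_0=\id$. For $\theta$ close to the identity its graph in $\C^m\times\overline{\C}^m\cong T^*\R^{2m}$ is the differential of a quadratic generating function $S^\theta$ on $\R^{2m}$, and one defines a new quadratic form $\phi^\theta$ on $\R^m\times\R^{2m}\times\R^n$ by an explicit coupling of $\phi$ with $S^\theta$. A direct computation gives $\rho(\phi^\theta)=\theta(\rho(\phi))$, and after a $\phi$-dependent linear change of the $\R^{2m}$ variables one obtains $\phi^{\id}=h^m\cdot\phi$ on the nose. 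Right $\QQ$-equivariance is automatic because the original $\R^n$ block is untouched, and since $\phi^\theta$ is by construction the graph of a quadratic form, transversality to $V\oplus i\R^{n+2m}$ is never at risk. Stabilisation enters only because $S^\theta$ exists only for $\theta$ near the identity: one writes $\theta_t=\theta^K_t\circ\cdots\circ\theta^1_t$ with each factor small and composes the corresponding operators, yielding $N=mK$. This explicit Chekanov-type construction is the key idea your sketch is missing.
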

\begin{proof}
  For concreteness, we choose identifications $E=\C^m$ and $V=i\R^m$, and our first step is to use the fact that the map $\Sp_{2m} \to \Lambda_0(\C^m)$ is a smooth fiber bundle to pick a smooth map $\theta\colon L\times [0,1]\to \Sp_{2m}$ which is the identity over $L \times \{0\}$, and such that $\theta_t(g_0)=g_t$ for all $t\in[0,1]$. We now explain the construction if this path $\theta_t$ remains in a neighbourhood of the identity to be specified below.

To that end, let $\theta\in \Sp_{2m}$ be any symplectic map which is close to the identity. We first associate to $\theta$ a quadratic form $S^\theta\colon \R^{2m}\to \R$. We have a symplectomorphism between $\C^m \times \overline{\C}^m$ and $T^*\R^{2m}$ given by the map
\begin{align}
  (x_1+iy_1,y_2+ix_2) \mapsto (x_1,x_2,y_1,y_2),
\end{align}
where $(x_1,x_2)$ denotes the coordinates on the base $ \R^{2m}$ of the cotangent bundle. The graph of $\theta^{-1}$ in $\C^m \times \overline{\C}^m$ is Lagrangian and graphical over the base in $T^*\R^{2m}$ (this graphicality condition is the precise closeness between $\theta$ and the identity which we need). So there is a unique quadratic form $S^\theta(x_1,x_2)$ so that $dS^\theta$ is the image of this graph, i.e. so that for every pair $(x_1,x_2)$ we have
\begin{equation} \label{eq:graph_theta_graph_dS}
  \theta^{-1}\left(x_1 + i  \frac{\del S^{\theta}}{\del x_1}(x_1,x_2) \right) = \frac{\del S^{\theta}}{\del x_2}(x_1,x_2)+ix_2.
\end{equation}
Note that, for the identity of the group $\Sp_{2m}$, we have
\begin{align} \label{eq:S-of_identity}
  S^\id(x_1,x_2) = \langle x_1 , x_2 \rangle
\end{align}
where $\langle - , - \rangle$ denotes the standard inner product on $\R^m$.

Our assumptions on $ \Lambda^V_n(\C^m)$ imply that any element $\phi$ of this space is similarly the graph of a quadratic form on $\R^{m+n}$, which we write as $\phi(x,v)$ with $(x,v)\in \R^m\times \R^n$. Now consider the quadratic form on $ \R^{3m+n}$ 
\begin{equation} \label{eq:formula_quadratic_form_phi'}
  \phi'(X_1,X_2,X_3,v) = \phi(X_3,v) + S^{\theta}(X_1,X_2) - \langle X_3 , X_2 \rangle,
\end{equation}
where the coordinate $(X_1, X_2, X_3)$ are on $\R^{3m}$. 
 The critical points of $\phi'$ over a fixed $X_1\in \R^m$ are the points $(X_2,X_3,v)$ which satisfy
\begin{align}
  0 = \frac{\del \phi}{\del v}, \qquad X_2 =  \frac{\del \phi}{\del X_3}, \qquad X_3 =   \frac{\del S^{\theta}}{\del X_2},
\end{align}
and at these points we have $\frac{\del \phi'}{\del X_1} = \frac{\del S^{\theta}}{\del X_1}$. Plugging into Equation \eqref{eq:graph_theta_graph_dS}, and identifying $x = X_1$,  we conclude that
$\rho(\phi') = \theta(\rho(\phi))$.

When $\theta = \id$, the above $\phi'$ is not obtained by left multiplication by $h^m$. We fix this by making a \emph{linear} change of variables
\begin{equation}
 x_1 = X_1,\quad x_2 = X_2 - f(X_1,X_3,  v), \quad x_3= X_1 - X_3.
\end{equation}
where $f\colon \R^{2m} \times  \R^n \to \R^m$ is a linear function solving
\begin{align} \label{eq:twisted_change_of_coordinates_X_2}
  \langle f(X_1, X_3, v),X_1 -X_3 \rangle = \phi(X_1,v) -  \phi(X_3,v) ,
\end{align}
which is solved explicitly using the symmetric matrix representing $\phi$. Performing the above substitution in Equation \eqref{eq:formula_quadratic_form_phi'}, we define
\begin{equation}
  \phi^\theta(x_1,x_2,x_3,v)=\phi'(X_1,X_2,X_3,v),
\end{equation}
and compute that, for $\theta = \id$, we have,
\begin{align}
  \phi^\id(x_1,x_2,x_3,v) & = \phi(X_3,v) + \langle X_1 - X_3, X_2 \rangle \\
  & = \phi(X_3,v) + \langle x_3, x_2 + f(X_1, X_3, v) \rangle \\
  & = \phi(x_1, v) + \langle x_3, x_2 \rangle.
\end{align}
Up to reordering the co-ordinates labelled $x_2$ and $x_3$ (and rescaling $x_2$ by a factor of $2$, recall $h(x,y)=2xy$ because
we stick to non-degenerate quadratic forms with eigenvalues $\pm 1$), we thus conclude that
\begin{equation}
  \phi^\id=h^m\cdot\phi
\end{equation}
as desired. In addition, it is easy to see that the remaining properties
\begin{align}
  \rho(\phi^\theta) & =\theta(\rho(\phi)) \\
  (\phi\cdot q)^\theta &=\phi^\theta \cdot q
\end{align}
still hold. We denote $\Theta(\phi)=\phi^\theta$.

The above construction varies smoothly in the choice of map $\theta$, and is well-defined as long as it remains in a neighbourhood of the identity, yielding a smooth path $\Theta_t$. In general, we decompose $\theta_t$ as a product
\begin{equation}
 \theta_t=\theta_t^K  \circ \cdots \circ \theta_t^1 
\end{equation}
of paths $\{ \theta_t^i\}_{i=1}^K$ in $\Sp_{2m}$ each of which stays close enough to the identity. We then define $\Theta_t$ as the composition
\begin{equation}
  \Theta_t \equiv \Theta^{K}_{t} \circ \cdots  \circ \Theta^{1}_{t} 
\end{equation}
 which has the desired properties with $N=mK$.
\end{proof}

We would like to express the above result as a Serre fibration statement and for that
we need to pass to a limit where the stabilization $\phi\mapsto h^N\cdot \phi$ in
Lemma~\ref{lem:chekanovlinear} becomes the identity. So we define:
\begin{align}
\QQ_\infty &= \colim (\QQ \xrightarrow{h \cdot} \QQ
\xrightarrow{h  \cdot} \QQ \to\cdots) \\
\Lambda^V_\infty(E)  &= \colim ( \Lambda^V(E) \xrightarrow{h \cdot}
\Lambda^V(E) \xrightarrow{h \cdot} \Lambda^V(E) \to\cdots).
\end{align}

We now give the Serre fibration statement which essentially goes back to Giroux \cite{Giroux1990} and Latour \cite{Latour1991}
and its corresponding statement in terms of the (one-sided) bar construction applied to the monoid $\QQ $ and the module $\Lambda_\infty^V(E)$, as discussed in Appendix~\ref{app:monoids-and-bundles}. To clarify the meaning of the next result, it may be useful to recall that the bar construction $|B(\Lambda_\infty^V(E), \QQ)|$ is a model for the (homotopy) quotient of $\Lambda_\infty^V(E)$ by the action of $\QQ$, so that the second statement below asserts that this quotient is equivalent to $\Lambda_0(E) $.
\begin{thm}\label{thm:giroux} 
  The reduction map $\Lambda_\infty^V(E)\to \Lambda_0(E)$ is a Serre fibration with fiber homotopy equivalent as right $\QQ$-spaces to $\QQ_\infty$.

  The map $|B(\Lambda_\infty^V(E), \QQ)|\to \Lambda_0(E)$ is a Serre fibration with
  contractible fibers, hence a homotopy equivalence.
\end{thm}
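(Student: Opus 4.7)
My plan is to prove Theorem \ref{thm:giroux} in two stages, with the bar-construction statement building on the reduction-map fibration.

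For the first statement — that $\rho\colon \Lambda_\infty^V(E)\to\Lambda_0(E)$ is a Serre fibration — I verify the homotopy lifting property against a compact manifold $L$ (for instance $D^k$). Given a smooth homotopy $g\colon L\times[0,1]\to\Lambda_0(E)$ and an initial lift $\tilde g_0\colon L\to \Lambda_\infty^V(E)$, compactness lets $\tilde g_0$ factor through some finite stage $\Lambda^V(E)$. Lemma \ref{lem:chekanovlinear} applied to this representative yields a smooth family $\Theta(\tilde g_0(-),t)\colon L\times[0,1]\to \Lambda^V(E)$ lifting $g$ with $\Theta(\tilde g_0(-),0)=h^N\cdot\tilde g_0$; since $h^N\cdot\tilde g_0$ is identified with $\tilde g_0$ in the colimit, this is a genuine lift into $\Lambda_\infty^V(E)$. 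The fiber identification comes from passing Lemma \ref{lem:fiber} to the colimit: for any $H\in\Lambda_0(E)$ transverse to $V$, $\rho^{-1}(H)\cap\Lambda_\infty^V(E)$ is $\QQ$-equivariantly homotopy equivalent to $\QQ_\infty$, and by the Serre fibration property just established, parallel transport along paths in the connected space $\Lambda_0(E)$ extends this to every fiber.

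For the second statement, $\QQ$-invariance of $\rho$ gives a natural map $|B(\Lambda_\infty^V(E),\QQ)|\to \Lambda_0(E)$ whose fiber over $H$ is $|B(\rho^{-1}(H),\QQ)|\simeq|B(\QQ_\infty,\QQ)|$. Commuting the bar construction with the filtered colimit defining $\QQ_\infty$, one gets $|B(\QQ_\infty,\QQ)|\simeq\colim_N|B(\QQ,\QQ)|$ along the maps induced by left multiplication by $h$. Since $|B(\QQ,\QQ)|$ is the standard model for $E\QQ$ — the right regular action admits the usual extra simplicial degeneracy coming from the unit — each term is contractible, so the transition maps are homotopy equivalences between contractibles and the mapping telescope is weakly contractible. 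All fibers being contractible, Whitehead's theorem finishes the argument once the Serre fibration property is secured.

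The main obstacle will be upgrading this fiberwise contractibility into a genuine Serre fibration statement for $|B(\Lambda_\infty^V(E),\QQ)|\to\Lambda_0(E)$. I would handle this either via a general principle — the geometric realization of a simplicial map whose levels are Serre fibrations over a constant simplicial base is a Serre fibration, provided the total simplicial space is good in the sense of Segal, a condition met by the bar construction of a well-pointed monoid — or more explicitly by re-running the argument of Lemma \ref{lem:chekanovlinear} levelwise on $B_n(\Lambda_\infty^V(E),\QQ)=\Lambda_\infty^V(E)\times\QQ^n$, using right $\QQ$-equivariance of $\Theta$ to enforce compatibility with the face and degeneracy maps.
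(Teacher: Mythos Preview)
Your proposal is correct and matches the paper's approach: the first statement is proved identically, and for the second the paper takes your second option, using the right $\QQ$-equivariance of $\Theta$ to induce a lifting operator on $|B(\Lambda^V(E),\QQ)|$ after representing the initial lift at a finite stage of the colimit (via commutation of the bar construction and geometric realization with filtered colimits). One small point the paper makes explicit: to conclude that $|B(\QQ_\infty,\QQ)|=\colim|B(\QQ,\QQ)|$ is contractible, one notes that the transition maps induced by $h\cdot$ are cofibrations, which is precisely what justifies identifying the colimit with your mapping telescope.
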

\begin{proof}
Given a map $g\colon D^n\times [0,1]\to\Lambda_0(E)$, let $\phi_0\colon D^n\to \Lambda^V_\infty(E)$ be a lift of $g_0=g|_{D^n\times\{0\}}$.
Lemma~\ref{lem:chekanovlinear} provides a map $\Theta\colon g_0^*\Lambda^V(E)\times [0,1]\to g^*\Lambda^V(E)$.
Since $D^n$ is compact, we can represent $\phi_0$ as a map
$\tilde\phi_0\colon D^n\to \Lambda^V(E)$ at some finite step of the colimit $\Lambda^V_\infty(E)$.
Then $\phi_t=\Theta_t \circ\phi_0$ composed with the map $\Lambda^V(E)\to \Lambda^V_\infty(E)$ is the required lift of $g$.
Lemma~\ref{lem:fiber} implies the claim about the fiber. 

The second assertion does not formally follow from the first so we have to rephrase its proof slightly.
We start again with a map $g:D^n\times [0,1]\to\Lambda_0(E)$ and the  associated operator $\Theta$ from Lemma~\ref{lem:chekanovlinear}
but consider instead a lift $\psi_0:D^n\to|B(\Lambda_\infty^V(E), \QQ)|$ of $g_0$.
Since the bar construction and the geometric realization commute with colimits,
  we have 
\begin{equation}|B(\Lambda^V_\infty(E),\QQ)| = \colim(|B(\Lambda^V(E),\QQ)| \xrightarrow{h\cdot} |B(\Lambda^V(E),\QQ)| \xrightarrow{h\cdot} \cdots).\end{equation}
Since $D^n$ is compact, we can represent $\psi_0$ by a map $\psi'_0\colon D^n\to|B(\Lambda^V(E),\QQ)|$ at a finite step. That this
map lifts $g_0$ means that it is a section of the pull back $g_0^*|B(\Lambda^V(E),\QQ)| \to D^n$.
The map $\Theta_t$ is $\QQ$-equivariant and hence induces a map $\Theta'_t\colon g_0^*|B(\Lambda^V(E),\QQ)|\to g^*|B(\Lambda^V(E),\QQ)|$.
We set $\psi'_t=\Theta'_t\circ\psi'_0$ and compose it back with the map to the colimit $|B(\Lambda_\infty^V(E), \QQ)|$
to get the required lift $\psi_t$. From the first part of the statement, we know that the fiber is homotopy equivalent to $|B(\QQ_\infty,\QQ)|$.
But we have 
\begin{equation}|B(\QQ_\infty,\QQ)|=\colim(|B(\QQ,\QQ)| \xrightarrow{h\cdot}   |B(\QQ,\QQ)| \xrightarrow{h\cdot}  \dots),\end{equation}
$|B(\QQ,\QQ)|$ is contractible according to Lemma~\ref{lem:BQQ}, and each arrow in the above colimit is a cofibration, so we conclude that this fiber is contractible.  
\end{proof}

\begin{rem}\label{rem:ZxZxBO}
Writing the space $\QQ_\infty$, as the quotient
of $\QQ\times \N$ by the equivalence relation generated by $(q,i)\sim (h\cdot q,i+1)$, we have two maps $\QQ_\infty\to \Z$, respectively corresponding to the dimension and the signature:
\begin{align}
  (q,i) &\mapsto \dim(q) - 2i,\\
  (q,i) &\mapsto \sgn(q)=\coind(q)-\ind(q).
\end{align}
These maps descend to $\QQ_\infty$ since $\dim(h)=2$ and $\sgn(h)=0$.
Finally, by taking the sum of the negative eigenspaces we also have a map 
 \begin{equation}
      E^- \colon \QQ_{\infty} \to BO,
 \end{equation}
This altogether yields a homotopy equivalence
   \begin{equation}
\QQ_\infty \simeq \Z \times \Z \times BO.
   \end{equation}
\end{rem}

We would like now to pass to a second limit when we replace $(E,V)\simeq (\C^m, i\R^m)$ by $(\C^\infty, i\R^\infty)$.
This is motivated by the definition of stable Gauss map which takes values in $\Lambda_0(\C^\infty)$ and also by the following lemma.

\begin{lem}\label{lem:connectedness}
For all $n,m\in \N$, the space $\Lambda_n^{i\R^m}(\C^m)$ is $(m-1)$-connected. 
\end{lem}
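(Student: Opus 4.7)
The plan is to realize $\Lambda_n^{i\R^m}(\C^m)$ explicitly as an open semi-algebraic subset of a Euclidean space whose complement has large codimension, and then apply a general position argument.

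First, I would exploit transversality to $i\R^{m+n}$: every Lagrangian in $\C^{m+n} = T^*\R^{m+n}$ transverse to $i\R^{m+n}$ is the graph of $dQ$ for a unique quadratic form $Q$ on $\R^{m+n}$. Writing the Hessian matrix of $Q$ in block form with respect to the decomposition $\R^{m+n} = \R^m \oplus \R^n$ as
\[
H = \begin{pmatrix} A & B^T \\ B & C \end{pmatrix}, \qquad A \in \text{Sym}_m, \ B \in M_{n,m}(\R), \ C \in \text{Sym}_n,
\]
the graph of $dQ$ intersects $\C^m \times \R^n = \{\eta = 0\}$ precisely in the kernel of the map $(x,v) \mapsto Bx + Cv$, so transversality to $\C^m \times \R^n$ (requiring this kernel to have dimension $m$) translates into the condition that $[B\mid C] \colon \R^{m+n} \to \R^n$ be surjective. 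Since $A$ is unconstrained, we obtain a diffeomorphism
\[
\Lambda_n^{i\R^m}(\C^m) \cong \text{Sym}_m \times X_{m,n}, \qquad X_{m,n} = \{(B,C) : \text{rank}[B\mid C] = n\}.
\]
It therefore suffices to show $X_{m,n}$ is $(m-1)$-connected.

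Next I would estimate the codimension of the closed rank-deficient locus $X_{m,n}^c \subset M_{n,m}\times\text{Sym}_n$. A point lies in $X_{m,n}^c$ iff there exists a nonzero $v\in\R^n$ with $v^T B = 0$ and (using symmetry) $Cv = 0$. For each fixed nonzero $v$, the equation $v^T B = 0$ cuts codimension $m$ in $M_{n,m}$, while $Cv = 0$ cuts codimension $n$ in $\text{Sym}_n$. Parameterizing $X_{m,n}^c$ by such triples $(v, B, C)$ and accounting for the $\R^*$-action on $v$, the top stratum (rank exactly $n-1$) has codimension $m + n - (n-1) = m + 1$ in the ambient space, and lower rank strata have strictly higher codimension. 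Hence $X_{m,n}^c$ is a real algebraic subvariety of codimension at least $m+1$.

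Finally, since $X_{m,n}^c$ is a closed semi-algebraic subset of codimension $\geq m+1$ in an affine space, it is triangulable and a standard general position argument applies: any continuous map $f\colon S^k \to X_{m,n}$ with $k \leq m-1$ extends to a map $D^{k+1} \to M_{n,m}\times\text{Sym}_n$, which can be perturbed rel $\partial D^{k+1}$ to avoid $X_{m,n}^c$ whenever $k+1 < m+1$, i.e. $k \leq m-1$. This gives the desired $(m-1)$-connectedness of $X_{m,n}$, and hence of $\Lambda_n^{i\R^m}(\C^m)$. The only substantive step is the codimension bound in paragraph two; the passage from codimension to connectivity is routine.
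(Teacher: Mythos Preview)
Your proof is correct and follows essentially the same approach as the paper's. Both arguments identify $\Lambda^{i\R^m}_n(\C^m)$ as the open subset of the contractible space $\Lambda^{i\R^{m+n}}_0(\C^{m+n}) \cong \mathrm{Sym}_{m+n}$ consisting of Lagrangians transverse to $\C^m \times \R^n$, and then observe that the complement has codimension at least $m+1$; the paper states this codimension bound without justification, whereas you supply the explicit block-matrix computation verifying it.
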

\begin{proof}
The space $\Lambda^{i\R^{m+n}}_0(\C^{m+n})$ is contractible and the set of $\phi\in \Lambda^{i\R^{m+n}}_0(\C^{m+n})$
such that $\phi$ is not transverse to $\C^m\times \R^n$ has codimension at least $m+1$, hence
$\Lambda^{i\R^m}_n(\C^m)$ is $(m-1)$-connected.
\end{proof}

We have already defined $\Lambda_0(\C^\infty)$ as

\begin{equation}\Lambda_0(\C^\infty)=\colim(\Lambda_0(\C)\xrightarrow{\R\oplus \cdot} \Lambda_0(\C^2) \to \cdots ),\end{equation}
and now introduce
\begin{equation}\Lambda_\infty^{i\R^\infty}(\C^\infty)=\colim(\Lambda_\infty^{i\R}(\C)\xrightarrow{\R\oplus \cdot} \Lambda_\infty^{i\R^2}(\C^2) \to \cdots ).\end{equation}
Note that the left action used to define $\Lambda_\infty^V(E)$ commutes with the maps in this limit,
and both commute with the right action. Hence $\QQ$ still acts on the right on
$\Lambda_\infty^{i\R^\infty}(\C^\infty)$ and symplectic reduction gives a $\QQ$-invariant map
\begin{equation}
\Lambda_\infty^{i\R^\infty}(\C^\infty) \to \Lambda_0(\C^\infty).
\end{equation}
Similarly as in Remark~\ref{rem:ZxZxBO}, the map $\Lambda^{i\R^m}(\C^m)\times \N \to \Z$ which takes $(\phi,i)$
to $\dim(\phi)-m-2i$ descends to a map
\begin{equation}\label{eq:Zcomponent}
\Lambda_\infty^{i\R^\infty}(\C^\infty) \to \Z .
\end{equation}
Indeed,  $\Lambda_\infty^{i\R^\infty}(\C^\infty)$ can alternatively be described as the quotient 
\[(\coprod_{m\in \N}\Lambda^{i\R^m}(\C^m)\times \N)/\sim\]
where $(\phi,i)\sim (h\cdot \phi, i+1)$ and $(\phi,i)\sim (\R\oplus \phi, i)$. Note also that the map \eqref{eq:Zcomponent}
is right $\QQ$-equivariant. We now obtain the final result of this subsection.

\begin{thm}\label{thm:girouxbott}
  The reduction map $\Lambda_\infty^{i\R^\infty}(\C^\infty)\to \Lambda_0(\C^\infty)$ is a Serre fibration with fiber homotopy equivalent to $\QQ_\infty$
and the induced map 
\[|B(\Lambda_\infty^{i\R^\infty}(\C^\infty), \QQ)|\to \Lambda_0(\C^\infty)\]
is a Serre fibration with contractible fibers, hence a homotopy equivalence.

The map $\Lambda_\infty^{i\R^\infty}(\C^\infty) \to \Z$ from Equation~\eqref{eq:Zcomponent} is a homotopy equivalence of right $\QQ$-spaces so the natural map
\[|B(\Lambda_\infty^{i\R^\infty}(\C^\infty), \QQ)|\to|B(\Z,\QQ)|\]
is a homotopy equivalence.

In particular, we get a homotopy equivalence  
  \begin{equation}\label{eq:BQ}
    |B(\Z,\QQ)|\simeq \Lambda_0(\C^\infty)
  \end{equation} which is canonical up to homotopy.
\end{thm}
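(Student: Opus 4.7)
The plan is to obtain all parts of the theorem by passing to the $m\to\infty$ colimit from the analogous finite-level results of Theorem~\ref{thm:giroux}, using compactness to lift simplices from the colimit back to a finite stage, and using Lemma~\ref{lem:connectedness} to pin down the homotopy type of $\Lambda_\infty^{i\R^\infty}(\C^\infty)$.

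For the first assertion, I would repeat the lifting argument from the proof of Theorem~\ref{thm:giroux}: given a test map $g\co D^n\times [0,1]\to\Lambda_0(\C^\infty)$ and a lift $\phi_0$ over $D^n\times\{0\}$, compactness of the source implies that both factor through some $\Lambda_0(\C^m)$ and $\Lambda_\infty^{i\R^m}(\C^m)$ for $m$ large. Theorem~\ref{thm:giroux} applied at level $m$ then produces a lift, which I push into the $m=\infty$ colimit along the maps $\R\oplus\cdot$. The fiber identification with $\QQ_\infty$ follows from Lemma~\ref{lem:fiber} at each finite stage together with the compatibility of the $\QQ$-equivariant deformation retractions with $\R\oplus\cdot$. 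The second assertion is handled by the same lifting strategy applied to $|B(\Lambda_\infty^{i\R^\infty}(\C^\infty), \QQ)|$, using that bar construction and geometric realization commute with filtered colimits; contractibility of the fiber reduces to contractibility of $|B(\QQ_\infty,\QQ)|$, which was already obtained in the proof of Theorem~\ref{thm:giroux} from contractibility of $|B(\QQ,\QQ)|$ (Lemma~\ref{lem:BQQ}) together with cofibrancy of the transition maps.

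For the $\Z$-equivalence, I would argue that each connected component of $\Lambda_\infty^{i\R^\infty}(\C^\infty)$ is weakly contractible: a fixed component is a filtered colimit of pieces of $\Lambda_n^{i\R^m}(\C^m)$ with $n$ growing linearly in $m$, and each is $(m-1)$-connected by Lemma~\ref{lem:connectedness}, so every homotopy group vanishes in the limit. The $\Z$-indexing of $\pi_0$ and the $\QQ$-equivariance are direct calculations: right multiplication by $q\in\QQ_r$ shifts $\dim(\phi)-m-2i$ by $r$, matching the additive right action of $\QQ$ on $\Z$ from Definition~\ref{dfn:monoid_Q}. Applying $|B(-,\QQ)|$ to this $\QQ$-equivariant weak equivalence then gives $|B(\Lambda_\infty^{i\R^\infty}(\C^\infty),\QQ)|\simeq |B(\Z,\QQ)|$, and combining with the second part yields the final identification $|B(\Z,\QQ)|\simeq \Lambda_0(\C^\infty)$.

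The main obstacle I anticipate is justifying that the bar construction sends this $\QQ$-equivariant weak equivalence $\Lambda_\infty^{i\R^\infty}(\C^\infty) \to \Z$ to a weak equivalence, despite the very different topology of source and target. This is a standard homotopical issue, and should follow from properness of the relevant simplicial spaces as part of the general theory developed in Appendix~\ref{app:monoids-and-bundles}. A secondary bookkeeping difficulty is that two independent colimits are in play (over $h$-stabilization and over $\R\oplus$), so one must verify that the lifts, deformation retractions, and equivariances produced by Theorem~\ref{thm:giroux} at level $m$ commute compatibly with $\R\oplus\cdot$ as $m$ varies; this is the reason why I prefer to lift one simplex at a time, fixing $m$ large enough, rather than trying to construct a lift at infinity directly.
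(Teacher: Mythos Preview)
Your proposal is correct and follows essentially the same route as the paper: the paper's proof is a two-sentence remark that the Serre fibration statements follow from Theorem~\ref{thm:giroux} because a colimit of Serre fibrations is a Serre fibration, and that the $\Z$-equivalence follows from Lemma~\ref{lem:connectedness} since each $\Lambda_n^{i\R^\infty}(\C^\infty)$ is contractible. Your elaboration via compactness-based lifting and your identification of the homotopical subtlety for $|B(-,\QQ)|$ are exactly the content behind these terse sentences (the latter being handled by the standing properness hypothesis in Remark~\ref{Nice-Assumptions}); one small slip is the phrase ``$n$ growing linearly in $m$'', since under $\R\oplus\cdot$ the index $n$ is fixed while $m$ grows, but the connectivity bound $(m-1)$ from Lemma~\ref{lem:connectedness} still gives what you need.
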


\begin{proof}
The statements about Serre fibrations follow from Theorem~\ref{thm:giroux} using the fact that a colimit of Serre fibrations is a Serre fibration.

The fact that $\Lambda_\infty^{i\R^\infty}(\C^\infty)\to \Z$ is a homotopy equivalence of right $\QQ$-spaces follows from Lemma~\ref{lem:connectedness} which implies that $\Lambda_n^{i\R^\infty}(\C^\infty)$ is contractible for all $n\in \N$.
\end{proof}

\begin{rem}\label{rem:bott}
In view of Remark~\ref{rem:ZxZxBO}, Theorem~\ref{thm:girouxbott} and an argument similar to \cite[Lemma D.1]{hatcher_short_2014}) gives
a fibration sequence
\[ \Z \times \Z \times BO \to \Z \to U/O\]
which implies the Bott periodicity homotopy equivalence $\Omega (U/O) \simeq \Z\times BO$.
\end{rem}

When dealing with generating functions, it is more natural to have $\N$ instead of $\Z$ (since the domain of a generating function
has non-negative dimension) and also useful to translate globally (by stabilization), so we record the following lemma.

\begin{lem}\label{lem:translation}
The map $|B(\N,\QQ)|\to |B(\Z,\QQ)|$ induced by the inclusion $\N \to \Z$ is a homotopy equivalence.

The map $|B(\Z,\QQ)|\to |B(\Z,\QQ)|$ induced by the map $\Z\to\Z$, $x \mapsto x+1$,
is homotopic to the identity.
\end{lem}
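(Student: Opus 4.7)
The plan is to establish part 2 first, by using Theorem~\ref{thm:girouxbott} together with the observation that translation on $\Z$ can be implemented by a left multiplication on $\Lambda_\infty^{i\R^\infty}(\C^\infty)$ that becomes trivial after symplectic reduction; then part 1 is obtained from part 2 by a sequential-colimit argument.

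For part 2, I would fix $q \in \QQ_1$ (say $q(x) = x^2$) and consider the left-multiplication map $L_q : \Lambda_\infty^{i\R^\infty}(\C^\infty) \to \Lambda_\infty^{i\R^\infty}(\C^\infty)$, $\phi \mapsto q \cdot \phi$. Since the left and right $\QQ$-actions on $\Lambda(E)$ commute (they are part of a bimodule structure), $L_q$ is a morphism of right $\QQ$-spaces; under the dimension map $\dim$ of Theorem~\ref{thm:girouxbott} it intertwines with translation $+1$ on $\Z$. The crucial observation is that $\rho \circ L_q = \rho$: the reduction of $q \oplus \phi \subset \C \oplus E \oplus \C^n$ along $\R \oplus E \oplus \R^n$ equals $\rho(\phi)$, since $q$ is transverse to $\R$ and reduces to a point. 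Passing to the bar construction, $\rho \circ L_q = \rho$ as self-maps with target $\Lambda_0(\C^\infty)$. Since Theorem~\ref{thm:girouxbott} provides a homotopy inverse $s$ to $\rho$, we obtain $L_q \simeq s \circ \rho \circ L_q = s \circ \rho \simeq \id$ on $|B(\Lambda_\infty^{i\R^\infty}(\C^\infty), \QQ)|$. Transferring this homotopy via the $\dim$-equivalence $|B(\Lambda_\infty^{i\R^\infty}(\C^\infty), \QQ)| \simeq |B(\Z, \QQ)|$ establishes part 2.

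For part 1, I would express $\Z$ as the sequential colimit of right $\QQ$-modules $\colim_k (\N - k)$, where $\N - k = \{n \in \Z : n \geq -k\}$ is identified with $\N$ by $n \mapsto n + k$; under these identifications the transition maps correspond to $+1 : \N \to \N$. Since $|B(-,\QQ)|$ commutes with sequential colimits, $|B(\Z, \QQ)| = \colim_k |B(\N, \QQ)|$ with transitions $|+1|$, and the inclusion of the zeroth term is precisely the map in the lemma. Provided each transition is a homotopy equivalence, the standard mapping-telescope argument gives that the inclusion of the first term is a homotopy equivalence.

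The hard step will be to show that $|+1| : |B(\N, \QQ)| \to |B(\N, \QQ)|$ is a homotopy equivalence. The plan is to repeat the argument of part 2 in the non-stabilized setting: $\dim : \Lambda^{i\R^\infty}(\C^\infty) \to \N$ is a $\QQ$-equivariant homotopy equivalence by Lemma~\ref{lem:connectedness}, $L_q$ is a $\QQ$-equivariant lift of $+1 : \N \to \N$, and the computation $\rho \circ L_q = \rho$ still applies verbatim. What is required in place of Theorem~\ref{thm:girouxbott} is a non-stabilized analogue, namely that $|B(\Lambda^{i\R^\infty}(\C^\infty), \QQ)| \to \Lambda_0(\C^\infty)$ is a homotopy equivalence. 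This should follow from the method of proof of Theorem~\ref{thm:giroux}: the fiber of $\rho : \Lambda^V(E) \to \Lambda_0(E)$ over a point transverse to $V$ is $\QQ$-equivalent to $\QQ$ by Lemma~\ref{lem:fiber}, and $|B(\QQ, \QQ)|$ is contractible by Lemma~\ref{lem:BQQ}, so the fiber of the induced map on bar constructions should be contractible. Making this rigorous requires upgrading the $h^N$-stabilized lifting from Lemma~\ref{lem:chekanovlinear} to a genuine lift on bar constructions, using the $\QQ$-equivariance to absorb the stabilization.
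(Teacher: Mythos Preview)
Your argument for part~2 is correct and takes a genuinely different route from the paper. You exploit the geometric content of Theorem~\ref{thm:girouxbott}: the left action $L_q$ lifts the translation $+1$, commutes with the right $\QQ$-action so descends to the bar construction, and satisfies $\rho\circ L_q=\rho$ because symplectic reduction is invariant under the full bimodule action of $\Lambda(0)$. Since $\rho$ is a homotopy equivalence on the bar construction this forces $L_q\simeq\id$, and transport along the $\dim$-equivalence gives $+1\simeq\id$. This is an elegant geometric proof.

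Your argument for part~1, however, has a genuine gap. You reduce to showing that $|{+}1|\colon|B(\N,\QQ)|\to|B(\N,\QQ)|$ is a homotopy equivalence, and propose to repeat the part~2 argument using a non-stabilized analogue of Theorem~\ref{thm:girouxbott}, namely that $|B(\Lambda^{i\R^\infty}(\C^\infty),\QQ)|\to\Lambda_0(\C^\infty)$ is already an equivalence before passing to $\Lambda_\infty$. But the fibration property in Theorem~\ref{thm:giroux} is established precisely by absorbing the left $h^N$-stabilization of Lemma~\ref{lem:chekanovlinear} into the colimit defining $\Lambda_\infty$. Your suggestion to ``absorb the stabilization using the $\QQ$-equivariance'' runs into the problem that the bar construction quotients by the \emph{right} action, whereas the stabilization is on the \emph{left}; and under the $\dim$-equivalence $\Lambda^{i\R^\infty}(\C^\infty)\simeq\N$, left multiplication by $h^N$ corresponds exactly to $+2N$ on $|B(\N,\QQ)|$, so showing it is an equivalence is the very statement you are trying to prove. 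The argument is circular as written.

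The paper takes a much simpler, purely formal route that handles both parts symmetrically. Since $\dim\colon\QQ\to\N$ and $\dim\colon\QQ\to\Z$ are monoid maps, one has fibration sequences
\[
|B(\N,\QQ)|\to|B\QQ|\to|B\N|,\qquad |B(\Z,\QQ)|\to|B\QQ|\to|B\Z|,
\]
and the inclusion $\N\hookrightarrow\Z$ (respectively the translation $+1$ on $\Z$) induces a map of fibration sequences that is the identity on $|B\QQ|$ and a homotopy equivalence (respectively homotopic to the identity) on $|B\N|\simeq|B\Z|\simeq S^1$. Both statements then follow by comparing homotopy fibres. This avoids any appeal to Theorem~\ref{thm:girouxbott} and dispenses with the colimit gymnastics entirely.
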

\begin{proof}
  Let $X=|B(\N,\QQ)|$ and consider a $\QQ$-twisted map $(X_{i\in I},n_i,q_{ij})$ from $X$ to $\N$ which using Proposition~\ref{prop:QF-bundles} corresponds to the identity on $X$. Fix any $q\in \QQ_1$. Let $J=I\sqcup I$ where we denote $i$ in the first copy by $i'$ and in the second copy by $i''$. We extend the induced order on $J$ by defining $i'<j''$ for all $i,j\in I$. Consider the open cover of $X\times [0,1]$ given by sets $X_{i'} = X_i\times [0,\tfrac23)$ and $X_{i''}=X_i\times (\tfrac13,1]$. We define a $\QQ$-twisted map $X\times I$ to $\N$ by letting $n_{i'}=n_i$ and $n_{i''}=n_i+1$, $q_{i'j'}=q_{ij}$, $q_{i''j''}=q_{ij}$ and $q_{i'j''} = q\oplus q_{ij}$. In particular $q_{i',i''}=q$. This shows that the $\QQ$-twisted map given by $(X_i,n_i+1,q_{ij})$ is equivalent (homotopic) to the identity.

  As $B(\Z,\QQ) = \colim(B(\N,\QQ) \xrightarrow{+1} B(\N,\QQ) \xrightarrow{+1} B(\N,\QQ) \cdots )$ all claims follow.
\end{proof}


\subsection{Proof of Theorem~\ref{thmintro:twistedgirouxlatour}}\label{subsec:existencetgf}

In this subsection we prove Theorem~\ref{thm:twistedgirouxlatour} by adapting Giroux's proof of Theorem~\ref{thm:girouxlatour}
to the twisted setting.

\subsubsection{``only if'' part}

Given a generating function $(n,U,f,\psi)$ for a Legendrian immersion $\varphi\times z\colon L\to J^1 M$, at each point of $\psi(L)$
the tangent space to the graph of $df$ is a Lagrangian subspace $\phi$ of $E\times \C^k$ (recall $E=\varphi^*(TT^* M)$ and $V \subset E$
is the vertical Lagrangian subbundle) with the following properties:

\begin{itemize}
\item it is transverse to $E\times \R^k$ (by the transversality condition in Definition~\ref{dfn:gf}),
\item it is transverse to $V\times i\R^k$ (because it is a graph),
\item the reduction of $\phi$, i.e. projection of $\phi \cap (E\times \R^k)$ to $E$, is equal to the Gauss section $G$.
\end{itemize}
Hence with the notations of Subsection~\ref{subsec:girouxbott}, extended verbatim to the present bundle situation over $L$, we have
a section

\begin{equation}
\phi\colon L\to \Lambda^V(E)
\end{equation}
which lifts the Gauss section $G$ under the reduction map $\rho\colon\Lambda^V(E)\to \Lambda_0(E)$.
If we pick a complement $H'$ as in Definition~\ref{dfn:stablegaussmap},
we trivialize the situation and obtain a map 
\[H'\oplus \phi\colon L\to \Lambda^{i\R^m}(\C^m)\]
which lifts the Gauss map $L\to \Lambda_0(\C^m)$. Composing with the map 
\[\Lambda^{i\R^m}(\C^m) \to \Lambda_\infty^{i\R^m}(\C^m)\]
(inclusion, say, at the first place in the colimit) and with the map 
\[\Lambda_\infty^{i\R^m}(\C^m) \to \Lambda_\infty^{i\R^\infty}(\C^\infty),\]
we get a lift of the stable Gauss map $g_\varphi\colon L\to \Lambda_0(\C^\infty)$ to $\Lambda_\infty^{i\R^\infty}(\C^\infty)$.
But Theorem~\ref{thm:girouxbott} implies that the reduction map $\Lambda_\infty^{i\R^\infty}(\C^\infty)\to \Lambda_0(\C^\infty)$
is nullhomotopic (since $\Lambda_\infty^{i\R^\infty}(\C^\infty)$ is homotopy equivalent to $\Z$ and $\Lambda_0(\C^\infty)$
is connected) and hence so is $g_\varphi$, proving the ``only if'' part of Theorem~\ref{thm:girouxlatour}.

In order to extend this result to the twisted case, observe that a twisted generating function gives a directed open cover $(M_i)_{i\in I}$, together with
 a map $n_i\colon M_i \to \N$ and a lift $\phi_i\colon L_i=\pi^{-1}(M_i)\to \Lambda_{n_i}^V(E)$
of the Gauss section $G$ for each $i$, and for all $i<j$ a map $q'_{ij}\colon M_{ij}\to \QQ$
such that $\phi_i\oplus q_{ij}=\phi_j$ on $L_{ij}$ where $q_{ij}=q'_{ij}\circ \pi \colon L_{ij}\to \QQ$.
We use the notation $MV(M_\bullet)$ from Appendix \ref{sec:directed-open-covers} and the equivalence between
twisted sections and  simplicial maps $MV(M_\bullet) \to B(-,\QQ)$ from Appendix \ref{sec:app_associated_bundles}.
After trivializing with $H'\oplus H\simeq L\times \R^m$ and passing to the limit as $m\to \infty$, the situation
can be summarized in the following commutative diagram of simplicial spaces:

\begin{equation}
\begin{tikzcd}
MV(M_\bullet) \ar[r, "n_\bullet"] & B(\N,\QQ) \ar[r,"\sim"] & B(\Z,\QQ) \\
MV(L_\bullet) \ar[u,"\pi"] \ar[r, "H'\oplus \phi_\bullet"] \ar[d,"\sim"] & B(\Lambda^{i\R^\infty}(\C^\infty),\QQ) \ar[r] \ar[u] \ar[d] & B(\Lambda_\infty^{i\R^\infty}(\C^\infty),\QQ) \ar[u,"\sim"]\ar[ld,"\sim"] \\
L \ar[r,"g_\varphi"] & \Lambda_0(\C^\infty)&.\\
\end{tikzcd}
\end{equation}
Here, the two spaces in the bottom row are the constant simplicial spaces, and the symbol $\sim$ indicates that the given arrow is a homotopy equivalence after passing to geometric realizations. 

After applying the geometric realization functor, we get the following homotopy commutative diagram which proves the ``only if'' part of Theorem~\ref{thm:twistedgirouxlatour}, namely that $h\circ\pi$ is homotopic to $g_\varphi$:

\begin{equation}
\begin{tikzcd}
M  \ar[rd,"h"] &  {}\\ 
{|MV(M_\bullet)|}\ar[u,"\sim"] \ar[r] & {|B(\Z,\QQ)|} \\
{|MV(L_\bullet)|} \ar[u,"\pi"] \ar[r]\ar[d,"\sim"] & {|B(\Lambda_\infty^{i\R^\infty}(\C^\infty),\QQ)|} \ar[u,"\sim"] \ar[d,"\sim"] \\
L \ar[r,"g_\varphi"] & \Lambda_0(\C^\infty).\\
\end{tikzcd}
\end{equation}

\subsubsection{``if'' part}

We now turn to the ``if'' part of Theorem~\ref{thm:twistedgirouxlatour} which is somewhat more involved.
We start by picking smooth trivializations $E\simeq H \oplus H^*$, $H'\oplus H \simeq L \times \R^m$
as in Definition~\ref{dfn:stablegaussmap} with $m$ large enough so that the map 
\[h\colon M\to \Lambda_0(\C^\infty)\]
is represented up to homotopy by
a smooth map
\[h' \colon M\to \Lambda_0(\C^m).\]
Pick also a smooth non-degenerate quadratic form $q_{H'}$ on $H'$ and consider the associated smooth Gauss map
\[g'=q_{H'}\oplus G \colon L\to \Lambda_0(\C^m)\]
(this is homotopic to $H'\oplus G$ so is also a valid choice for Definition~\ref{dfn:stablegaussmap}).
By making $m$ larger we can assume that $g'$ is homotopic to $h'\circ \pi$.
We insist here on smoothness since in the end we want to construct an actual smooth twisted generating
function $(f_i,q_{ij})$.

\paragraph{Step 1 : construction of an infinitesimal twisted generating function}

In this first step we construct what will eventually be the tangent spaces $\phi_i$ to the graph of the $1$-forms $df_i$
and the quadratic forms $q_{ij}$.
In view of Theorem~\ref{thm:giroux}, symplectic reduction gives a homotopy equivalence
\[\rho\colon |B(\Lambda^{i\R^m}_\infty(\C^m),\QQ)|\to \Lambda_0(\C^m).\]
Composing $h'$ with a homotopy inverse of $\rho$ gives a map $M\to|B(\Lambda^{i\R^m}_\infty(\C^m),\QQ)|$
which we can represent at a finite step of the colimit as a map
\[M\to |B(\Lambda^{i\R^m}(\C^m),\QQ)|\]
to which we apply Proposition~\ref{prop:QF-bundles}. This yields a $\QQ$-twisted map
from $M$ to $\Lambda^{i\R^m}(\C^m)$,
namely a directed open cover $(M_i)_{i \in I}$ of $M$ and a simplicial map
\[(\phi,q)\colon MV(M_\bullet) \to B(\Lambda^{i\R^m}(\C^m),\QQ)\]
which covers a map $M\to \Lambda_0(\C^m)$ homotopic to $h'$. By Lemma~\ref{App:smoothing} we may assume that the functions are smooth.
We may also assume that, for each $i$, $M_i$ is connected and so $\phi_i$ takes values in $\Lambda_{n_i}^{i\R^m}(\C^m)$ for some integer $n_i$.

We pull back this data to $L$ under $\pi\colon L\to M$ to obtain a directed cover $L_i=\pi^{-1}(M_i)$
and (smooth) simplicial map 
\[MV(L_\bullet) \to B(\Lambda^{i\R^m}(\C^m),\QQ)\]
given by $\phi'_i=\phi_i\circ \pi\colon L_i\to \Lambda_{n_i}^{i\R^m}(\C^m)$ and $q'_{ij}=q_{ij}\circ \pi\colon L_{ij}\to \QQ$. The issue is
that $\phi'_i$ is not exactly a lift of the Gauss map $g'\colon L\to \Lambda_0(\C^m)$. However the maps $\rho\circ\phi'_i$
glue together into a map $g'_0\colon L\to \Lambda_0(\C^m)$ which is homotopic to $g'$. We can thus apply Lemma~\ref{lem:chekanovlinear}
to such a smooth homotopy $g'_t$, $t\in[0,1]$, with $g'_1=g'$ to obtain a smooth family of maps 
\[\Theta_t\colon (g'_0)^*\Lambda^{i\R^m}(\C^m)\to (g'_t)^*\Lambda^{i\R^m}(\C^m).\]
We set $\phi''_i=\Theta_1\circ \phi_i'\colon L_i \to \Lambda_{n_i+2N}^{i\R^m}(\C^m)$. This still satisfies $\phi''_i\oplus q_{ij}=\phi''_j$ due to the equivariance property of $\Theta_1$ and is thus a $\QQ$-twisted lift of $g'\colon L\to \Lambda_0(\C^m)$ to $\Lambda^{i\R^m}(\C^m)$.

Finally we undo the trivialization of $E$ as follows. Recall $E'\oplus E=L\times \C^m$ where $E'=H'\oplus H'^*$ and consider the permutation map
\[a_n\colon (E\oplus E'\oplus E) \times \C^n \to (E\oplus E' \oplus E)\times \C^n\]
given by $a_n(e_1,e',e_2,x)=(e_2,e',e_1,x)$. Let us also pick a non-degenerate quadratic form $q_H$ on $H$ and consider the $\QQ$-equivariant map 
\begin{equation}S\colon \coprod_{n\in \N}\Lambda^{i\R^m}_n(\C^m)\times X\to \coprod_{n\in \N}\Lambda_{m+n}^V(E)\end{equation}
defined by $\phi\in \Lambda_n^{i\R^m}(\C^m) \mapsto a_n(q_H \oplus \phi)$. We can check using the non-degeneracy of $q_H$ and $q_{H'}$
that $S$ actually lands in $\Lambda_{m+n}^V(E)$ and that $(S\circ\phi''_i,q'_{ij})$ is a $\QQ$-twisted lift of $G$ to $\Lambda^V(E)$.
For simplicity, let us rename $S\circ\phi_i''$ as $\phi_i$ from now on.

\paragraph{Step 2 : compatibility with a fixed embedding}
We now choose an embedding $\psi=\pi\times \psi'\colon L\to M\times \R^k$ covering $\pi\colon L\to M$ under the first projection.
Together with the Legendrian immersion $\varphi\times z$ we obtain an isotropic embedding of $L$ in the contact manifold $J^1(M\times \R^k)$:
\[\theta=\varphi\times\psi'\times z \colon L\to T^*M\times \C^k\times \R \simeq J^1(M\times \R^k).\]
A generating function $(\psi,U,f)$ for $\varphi\times z$ is nothing more (via its $1$-jet graph)
than a Legendrian submanifold of $J^1(M\times \R^k)$ which is graphical over $U$, and meets $T^*M\times \R^k\times\R$ transversely along $\theta(L)$. In this part, we construct the tangent field of this Legendrian submanifold. 

Given a lift $\phi$ of the Gauss map $G$ to $\Lambda_k^V(E)$, we have a monomorphism
\[u_\phi\colon TL \simeq\phi\cap (E\times \R^k) \to E/V \times \R^k.\]

Given $\psi'\colon L\to \R^k$, we can identify the vector bundle $E/V\times \R^k$ over $L$ with the
pullback of $TM\times T \R^k$ under $\pi\times \psi' \colon L\to M\times \R^k$. In this case $u_\phi$
is viewed as a monomorphism
\[u_\phi \colon TL \to TM \times T\R^k\]
covering $\pi\times \psi'$. In the same way, the pullback of $d(\pi\times \psi')$ is a monomorphism
$TL \to E/V\times \R^k$ above $L$. The Legendrian lift of $\phi$ is a field of Legendrian planes
along $\theta(L)$ and this field is tangent to $\theta(L)$ if and only if $\phi$ contains the image of
$d(\varphi\times \psi')$. Now we claim that this condition is equivalent to the equality
$u_\phi=d(\pi\times \psi')$. Indeed the inverse of the projection $\phi\cap (TT^*M \times T\R^k)\to TL$
is of the form $d\varphi\times v_\phi$ (because $\phi$ lifts the Gauss map), and $u_\phi=d(\pi\times \psi')$ is equivalent to $v_\phi=d\psi'$
which is further equivalent to $\phi\cap (TT^*M \times T\R^k)=\im(d\varphi\times d \psi')$ and to
$\im(d\varphi\times d\psi')\subset \phi$ for dimension reasons. We sum up this discussion in the following diagram:

\begin{equation*}
\begin{tikzcd}
& TT^* M \times T \R^k \ar[rd] & \\
TL \ar[ur,"d\varphi\times d\psi'"] \ar[dr,"d\varphi\times v_\phi"'] \ar[rr, bend right =10, "u_\phi"]\ar[rr, bend left =10, "d(\pi\times \psi')"] & & TM \times T \R^k\\
& \phi\cap (TT^*M \times T\R^k). \ar[ur]&
\end{tikzcd}
\end{equation*}

Let us call a lift $\phi$ of $G$ to $\Lambda_{k+n}^V(E)$ \emph{compatible} with $\psi$ if the monomorphism $u_\phi$ coincides with $d\psi\times 0\colon G\to E/V\times \R^k\times  \R^n$. Our goal is to make all $\phi_i$ compatible with $\psi$ at the same time. We first
stabilize all $\phi_i$, replacing them by $q\cdot \phi_i \in \Lambda_{k+n_i}^V(E)$ for some non-degenerate quadratic form $q$ on $\R^k$.
Then we will achieve the compatibility condition using the action of fiberwise diffeomorphisms at the linear algebra level,
following the procedure used by Giroux in his proof of Theorem~\ref{thm:girouxlatour}.
Namely consider the subgroup $P^m$ of $GL(E/V\times \R^m)$ consisting of linear automorphisms of the vector bundle $E/V\times \R^m$ which preserve the projection to $E/V$, i.e. those that have the form 
\[A=\begin{pmatrix} \id_{E/V} & 0 \\ a & b\end{pmatrix},\]
with $a\colon E/V\to \R^m$ and $b\in GL(\R^m)$. We consider here $P^m$ as a bundle over $L$. 
There is a natural symplectic action of $P^m$ on $E\times \C^m=E\times \R^m\times i\R^m$ as follows:
  \begin{equation}\label{eq:def tilde A}
    A\mapsto \tilde A=\begin{pmatrix} \id_E & 0 & -{}^t a {}^t b^{-1} \\ a \pi & b & 0 \\ 0 & 0  & {}^t b^{-1} \end{pmatrix},
  \end{equation}
  where $\pi\colon E\to E/V$ is the projection and
  ${}^t a\colon  i\R^m=(\R^m)^* \to (E/V)^*\simeq V\subset E$. If we fix a Lagrangian subspace $H$ of $E$ transverse to $V$, then
we have $E/V\simeq H$, $E\times \C^m \simeq (H \times \R^m)\times (H \times \R^m)^*$ and the above action is the natural symplectic lift
of $GL(H\times \R^m)$, but it turns out that the lift is independent of the choice of $H$.
Note that $\widetilde{A}$ preserves $V\oplus i\R^m$, $E\oplus \R^m$ and therefore the group $P^m$ acts on $\Lambda_m^V(E)$.
Moreover $P^m$ preserves the symplectic reduction: for all $A\in P^m$ and $\phi \in \Lambda_m^V(E)$, we have 
\begin{equation}
\rho(\widetilde{A}(\phi))=\rho(\phi).
\end{equation}
Also the monomorphism $u_\phi\colon\rho(\phi)\to E/V\times \R^m$ is altered by composition with $A$:
\begin{equation}
u_{\widetilde{A}(\phi)}=A\circ u_\phi.
\end{equation}
We denote $\Pi\colon G\to E/V$ the differential of $\pi\colon L\to M$ and write 
\[u_\phi=\Pi\times v_\phi\colon G\to E/V\times \R^m,\]
and
\[d\psi=\Pi\times v\colon G\to E/V \times\R^k\]
the differential of the embedding $\psi$. For each $i$, we have $\phi_i\in\Lambda_{k+n_i}^V(E)$, $u_{\phi_i}=\Pi\times 0 \times v_{\phi_i}$
and, setting $\psi_i=\psi\times 0\colon L\to \R^k\times \R^{n_i}$, $d\psi_i=\Pi\times v \times 0$. We will construct a family of sections
$A_i\colon L_i\to P^{k+n_i}$ such that $A_i\circ u_{\phi_i}=d\psi_i$
and  $A_j=A_i\times \id$ on $L_{ij}$. Then $\phi_i'=\widetilde{A_i}(\phi_i)$ will satisfy all requirements:

\[u_{\phi'_i}=A_i\circ u_{\phi_i}=d\psi_i\]
and, on $L_{ij}$, 
\[\phi'_j=\widetilde{A_j}(\phi_j)=(\widetilde{A_i}\times \id)(\phi_i\oplus q_{ij})=\phi'_i\oplus q_{ij}.\]

For the construction of $A_i$, we proceed as follows. We fix a metric on $E/V$, use the euclidean metric on $\R^{k+n_i}$ and consider
$p_i$ (resp. $q_i$) the orthogonal projection $E/V\times \R^k\times \R^{n_i} \to G$
on the image of $u_{\phi_i}$ (resp. $d\psi_i$). Consider the following sections of $P^{k+n_i}$:
\[B_i=\id + (0\times v\times 0)\circ p_i, \quad C_i=\id + (0\times 0\times u_{\phi_i}) \circ q_i, \quad A_i= C_i^{-1}\circ B_i.\]
We have 
\[B_i\circ u_{\phi_i}=\Pi\times v \times u_{\phi_i}=C_i\circ d\psi_i,\]
and hence
\[A_i\circ u_{\phi_i}=d\psi_i.\]
Finally, the property $A_j=A_i\times \id$ on $L_{ij}$ follows from construction since, on $L_{ij}$ we have
$u_{\phi_j}=u_{\phi_i}\times 0$.

\paragraph{Step 3: from infinitesimal to actual functions}

At this point we have a smooth $\QQ$-twisted lift $(\phi_i,q_{ij})$ of the Gauss map $G$ compatible
with the embedding $\psi:L\to M \times \R^k$. As we explained in the previous step, each $\phi_i$
can be viewed as a Legendrian plane field tangent to the isotropic embedding $L_i\to J^1(M\times \R^k)$.
We now upgrade these plane fields into germs of Legendrian submanifolds using the following result, whose proof is a straightforward differential topology exercise, using the Weinstein neighbourhood theorem: 
\begin{lem}\label{lem:weinsteinnhd}
Let $L$ be a compact isotropic submanifold in a contact manifold $V$ and $\phi$ a field of Legendrian planes along $L$
(i.e. Lagrangian planes in the contact structure with respect to its conformal symplectic structure) 
which is tangent to $L$ (namely for all $x\in L$, $T_x L \subset \phi(x)$). Then there exists a germ of a Legendrian submanifold $\Lambda$
along $L$ whose tangent spaces coincide with $\phi$ along $L$. Moreover if such a germ $\Lambda_A$ is already defined on a closed subset
$A$ of $L$, the germ $\Lambda$ may be assumed to extend $\Lambda_A$. \qed
\end{lem}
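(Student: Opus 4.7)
The plan is to invoke the contact Weinstein tubular neighbourhood theorem for the compact isotropic submanifold $L$, reducing the problem to a linear algebraic model in which the required germ can be written down explicitly, and then to handle the relative statement via a convexity argument combined with a parametric Moser lemma.

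First I would set up the normal-form data. Let $\xi \subset TV$ denote the contact distribution and let $\Omega$ be the induced conformal symplectic form on $\xi$. Since $L$ is isotropic, $TL$ sits as an isotropic subbundle of $\xi|_L$, and the conformal symplectic normal bundle $N := (TL)^{\perp_\Omega}/TL$ inherits a conformal symplectic structure. The Legendrian hypothesis $TL \subset \phi \subset \xi|_L$ forces $F := \phi/TL$ to be a Lagrangian subbundle of $N$.

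The Weinstein neighbourhood theorem for isotropic submanifolds asserts that a neighbourhood of $L$ in $V$ is determined, up to contactomorphism fixing $L$, by the conformal symplectic normal bundle $N$; more concretely, it is contactomorphic to a neighbourhood of the zero section in a standard model $J^1 L \oplus_L N$. Choosing any Lagrangian complement $F'$ of $F$ in $N$ splits $N \simeq F \oplus F^*$ symplectically, so the model becomes $J^1 L \oplus_L (F \oplus F^*)$. In this model, the submanifold
\[
\Lambda_{\mathrm{mod}} = \{(q, 0, 0, f, 0) \;:\; q \in L,\ f \in F_q\},
\]
namely the total space of $F$ with the $T^*L$, $\R$, and $F^*$ components set to zero, is Legendrian for the standard contact form and its tangent space along $L$ is $TL \oplus F$, which matches $\phi$ under the Weinstein identification. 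Transporting $\Lambda_{\mathrm{mod}}$ back to $V$ produces the desired germ $\Lambda$.

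For the relative statement, the choices involved are the Weinstein contactomorphism and the Lagrangian complement $F'$ of $F$ in $N$. The germ $\Lambda_A$ determines both of these on a neighbourhood of $A$ in $L$. For the Lagrangian complement, the space of complements of $F$ in $N$ is an affine bundle over $L$ modelled on symmetric bilinear forms on $F$, hence has contractible convex fibres, so a partition-of-unity argument extends a section defined near $A$ to a global section on $L$. The main obstacle, and the place where the essential technical work lies, is the relative form of the Weinstein theorem: given a contactomorphism already defined on a neighbourhood of $A$ in $V$, one must extend it to a neighbourhood of $L$ in $V$ without altering it near $A$. This is handled by a standard parametric Moser argument that interpolates the two contact forms via a smooth cut-off vanishing near $A$ and integrates the resulting time-dependent vector field, which I expect to go through with no new ideas beyond careful management of the cut-off.
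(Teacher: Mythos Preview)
Your proposal is correct and follows exactly the route the paper indicates: the lemma is stated with a \qed and the only hint given is that it is ``a straightforward differential topology exercise, using the Weinstein neighbourhood theorem,'' which is precisely what you carry out. One minor simplification for the relative part: once a global $\Lambda$ is built, the given $\Lambda_A$ is the $1$-jet graph of a function on $\Lambda$ vanishing to second order along $L$ near $A$, and cutting this function off produces the required extension directly, avoiding the relative Moser step.
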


Applying Lemma~\ref{lem:weinsteinnhd}
repeatedly by induction on $i$, we obtain Legendrian submanifolds which correspond exactly to the $1$-jet graphs
of our desired generating functions $f_i$ above a small neighborhood $U_i$ of $\psi_i(L_i)$ in $M\times \R^{n_i}$
(by the transversality condition to the vertical subbundle).
We start with $\phi_1$ and obtain $(U_1,f_1)$. Then $f_2$ should coincide with $f_1\oplus q_{12}$ on a neighborhood
of $\psi_2(L_{12})$ in $M_{12}\times \R^{n_2}$. Using the last assertion in Lemma~\ref{lem:weinsteinnhd}
we can find such $f_2$ (up to negligible shrinking) defined on an open neighborhood $U_2$ of $\psi_2(L_2)$
with $U_2\cap (M_{12}\times \R^{n_2})\subset U_1\times \R^{n_{12}}$, and the process goes on.

This finishes the proof of Theorem~\ref{thm:twistedgirouxlatour} (and of Theorem~\ref{thmintro:twistedgirouxlatour}).

\section{The doubling trick and generating functions of tube type}\label{sec:doubling}

In this section, we prove Theorem \ref{thmintro:tgftube} which establishes the existence of (twisted) generating functions of tube type (see Definition~\ref{dfn:tgftube}) for a nearby Lagrangian $L$. The construction works in two steps:
\begin{enumerate}
\item construction of a twisted generating function linear at infinity for the $2$-copy of $L$,
\item separation of the $2$-copies to obtain a twisted generating function of tube type for $L$.
\end{enumerate}

A similar procedure was used by the third author in \cite{guillermou_quantization_2012} to associate a sheaf on $M\times \R$ to $L$.

\subsection{Twisted generating functions linear at infinity and the homotopy lifting property}

The functions of tube type which we shall produce are a special class of functions that are linear at infinity; the notion we find most convenient relies on distinguishing a coordinate direction (which we call $w$), and saying that a function is linear at infinity if it is the sum of this coordinate with a function which, outside of a compact set, depends only on the other variables:
\begin{dfn}\label{dfn:linear}
A function $f:M\times \R \times \R^n \to \R$ is \emph{linear at infinity} if
it is of the form:
\begin{equation}
 f(x,w,v)=w+g(x,v)+\epsilon(x,w,v)
\end{equation}
where $\supp(\epsilon) \to M$ is proper.
\end{dfn}

Observe that, once we distinguish the coordinate $w$,  the function $g$ is determined by $f$ via the formula
\[g(x,v)=\lim_{w\to \infty} f(x,w,v)-w,\]
and hence so is $\epsilon$.

The obvious left and right actions of direct sum with non-degenerate quadratic forms
$q:M \to \QQ_m$ do not preserve our notion of linearity at infinity because the support of $\epsilon$ fails to be proper when we multiply the domain by $\R^m$.
So we define a modified version $\cdot \oplus_b q$ of the usual action $\cdot \oplus q$, which depends on an auxiliary function $b$ on $M$, and which compensates for this failure. We will need appropriate cut-off functions.

Pick once and for all a smooth function $\chi:\R\to [0,1]$ satisfying
\begin{itemize}
\item $\chi$ is compactly supported,
\item $\chi=1$ near $0$,
\item $|\chi'(u)|<2|u|$ for all $u\neq 0$.
\end{itemize}
We then define
\[\chi_m:\R^m\to [0,1]\]
by the formula:
\[\chi_m(u_1,\dots,u_m)=\chi(u_1)\cdots\chi(u_m).\]

\begin{lem}
The functions $\chi_m$ satisfy the following properties:
\begin{itemize}
\item $\chi_{m_1+m_2}(u_1,u_2)=\chi_{m_1}(u_1)\chi_{m_2}(u_2)$ for all $(u_1,u_2)\in \R^{m_1}\times \R^{m_2}$,
\item $\chi_m=\chi_m \circ \sigma$ for any permutation of coordinates $\sigma\colon \R^m\to \R^m$,
\item $\|\nabla \chi_m (u)\|< 2\|u\|$ for all $u\in \R^m\setminus\{0\}$.
\end{itemize}
\end{lem}
\begin{dfn}\label{dfn:plusb}
Given a smooth function $b:M\to [1,\infty)$, a smooth map $q\colon M\to \QQ_m$ and a function $f:M\times \R \times \R^n \to \R$
linear at infinity, we define $f\oplus_b q \colon M\times \R \times \R^{n+m} \to \R$ by the formula:
\begin{equation}\label{eq:modified_action}
   (f\oplus_b q)(x,w,v,u) =w+g(x,v)+q(u)+\chi_m(b(x)^{-1}u)\epsilon(x,w,v) 
\end{equation}
where $g$ and $\epsilon$ are as in Definition~\ref{dfn:linear}.
We say that $b$ is a \emph{bound} for $f$ if
\begin{equation}
\max_{w,v}  |\epsilon(x,w,v)|\leq b(x) 
\end{equation}
for all $x \in M$.
\end{dfn}
When $M$ is compact, it is enough to consider functions $b$ which are constant. We list some basic properties of this construction:

\begin{lem}\label{lem:bound}
Let $f:M\times \R\times \R^n \to \R$ be a function linear at infinity and $b$ a positive function on $M$.
\begin{itemize}
\item $f\oplus_b q$ is linear at infinity,
\item if $b$ is a bound for $f$, then it is also a bound for $f\oplus_b q$,
\item if $b$ is a bound for $f$, then $f\oplus_b q$ and $f\oplus q$ have the same singular set, and coincide near this set,
\item $(f\oplus_b q_1) \oplus_b q_2= f\oplus_b(q_1\oplus q_2)$.
\end{itemize}
\end{lem}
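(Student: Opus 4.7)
My plan is to treat the four items in order, with most work concentrated on the third. For the first item, I would rewrite the formula of Definition~\ref{dfn:plusb} as $(f\oplus_b q)(x,w,v,u)=w+g'(x,v,u)+\epsilon'(x,w,v,u)$ with $g'(x,v,u)=g(x,v)+q(u)$ and $\epsilon'(x,w,v,u)=\chi_q(b(x)^{-1}u)\,\epsilon(x,w,v)$, and verify that $\supp(\epsilon')\to M$ is proper: over a compact $K\subset M$, the $(w,v)$-coordinates are bounded thanks to the properness of $\supp(\epsilon)\to M$, while the $u$-coordinates are bounded because the support of $\chi_q$ is compact and varies continuously with $q$ (by the first property in Lemma~\ref{lem:cutoffq}), and $b$ is bounded on $K$. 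The second item is then immediate from $|\chi_q|\le 1$, which gives $|\epsilon'|\leq|\epsilon|\leq b$.

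For the fourth item, I would iteratively apply Definition~\ref{dfn:plusb}, noting that the ``remainder'' of $f\oplus_b q_1$ is $\chi_{q_1}(b^{-1}u_1)\,\epsilon(x,w,v)$, so a second application produces
\[
((f\oplus_b q_1)\oplus_b q_2)(x,w,v,u_1,u_2)=w+g(x,v)+q_1(u_1)+q_2(u_2)+\chi_{q_2}(b^{-1}u_2)\,\chi_{q_1}(b^{-1}u_1)\,\epsilon(x,w,v).
\]
The multiplicativity $\chi_{q_1\oplus q_2}(u_1,u_2)=\chi_{q_1}(u_1)\chi_{q_2}(u_2)$ from the second property of Lemma~\ref{lem:cutoffq} identifies this with $f\oplus_b(q_1\oplus q_2)$.

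The main step is the third item. Computing fiberwise derivatives,
\[
\partial_w(f\oplus_b q)=1+\chi_q(b^{-1}u)\,\partial_w\epsilon,\qquad \partial_v(f\oplus_b q)=\partial_v g+\chi_q(b^{-1}u)\,\partial_v\epsilon,
\]
\[
\partial_u(f\oplus_b q)=\nabla q(u)+b^{-1}\nabla\chi_q(b^{-1}u)\,\epsilon.
\]
The crux is to show that any fiberwise critical point of $f\oplus_b q$ must satisfy $u=0$. Setting $y=b^{-1}u$ and using linearity of $\nabla q$, the last equation rearranges to $b^2\nabla q(y)=-\nabla\chi_q(y)\,\epsilon$; combining the bound $|\epsilon|\le b$ with the final property $\|\nabla\chi_q\|\leq\|\nabla q\|$ (strict away from $0$) of Lemma~\ref{lem:cutoffq} forces $y=0$, hence $u=0$. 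Once $u=0$, the equality $\chi_q(b^{-1}u)=1$ holds on an open neighbourhood (since $\chi_q=1$ near the origin), so $f\oplus_b q$ and $f\oplus q$ literally coincide on this neighbourhood; in particular their fiberwise critical equations reduce to one another there and the critical sets agree. The main obstacle I anticipate is extracting $u=0$ cleanly from the cutoff inequality, where the strictness clause in Lemma~\ref{lem:cutoffq} is essential.
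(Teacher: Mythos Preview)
Your proof plan follows the paper's argument essentially line by line: the decomposition $f\oplus_b q=w+(g+q)+\chi_q(b^{-1}\cdot)\epsilon$ for item one, the bound $|\chi_q|\le 1$ for item two, the multiplicativity of $\chi_q$ for item four, and for item three the observation that $\partial_u(f\oplus_b q)=0$ forces $u=0$ via the gradient inequality of Lemma~\ref{lem:cutoffq}, after which $f\oplus_b q=f\oplus q$ near $\{u=0\}$. Your substitution $y=b^{-1}u$ is a helpful clarification the paper leaves implicit, since it puts $\nabla q$ and $\nabla\chi_q$ at the same argument so that the pointwise inequality from Lemma~\ref{lem:cutoffq} applies directly.
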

\begin{proof}
For the first assertion, we have $f\oplus_b q=w+g'+\epsilon'$ with $g'=g+q$ and $\epsilon'=\epsilon\chi_m(b^{-1}\cdot)$
is compactly supported. The second follows from the inequality $|\chi_m|\leq 1$. For the third,
if $\frac{\del f \oplus_b q}{\del u}=0$ and $u\neq 0$, then 
\[\nabla q(u) + b^{-1}\epsilon \nabla \chi_m(b^{-1}u)=0\]
which is impossible since $\|\nabla \chi_m(u')\|<2\|u'\|=\|\nabla q(u')\|$ for $u'\neq 0$ (recall $q$ has eigenvalues $\pm 1$,
so $\|\nabla q(u)\|=2\|u\|$) and $b^{-2}|\epsilon|\leq b^{-1}|\epsilon|\leq 1$. So the singular
set of $f\oplus_b q$ is contained in $\{u=0\}$. But $f\oplus_b q= f\oplus q$ near $\{u=0\}$, hence
the result. The last assertion follows from the equality $\chi_{m_1+m_2}(u_1,u_2)=\chi_{m_1}(u_1)\chi_{m_2}(u_2)$.
\end{proof}

\begin{dfn}\label{dfn:tgenerating function linear at infinity}
A \emph{twisted generating function linear at infinity} over a manifold $M$
is given by data $(M_i, 1+n_i, f_i, q_{ij}, b)$, where
\begin{itemize}
\item $(M_i)_{i\in I}$ is a directed open cover,
\item $f_i\colon M_i\times \R\times \R^{n_i}\to \R$ is a generating function linear at infinity over $M_i$,
\item $b:M \to [1,\infty)$ is smooth and is a bound for each function $f_i$,
\item $q_{ij}$ is a map from $M_{ij}$ to $ \QQ_{n_j-n_i}$ such that $f_i \oplus_b q_{ij}=f_j$ on $M_{ij}$.
\end{itemize}
\end{dfn}

The above definition used the right action of quadratic forms on functions which are linear at infinity. We similarly define the left action but since the first factor of $\R$ is special we act on the right of this factor:
\begin{equation*}
  (q\oplus_b f)(x,w,u,v) = (f\oplus_b q)(x,w,v,u).
\end{equation*}

We now turn to the homotopy lifting property of twisted generating functions linear at infinity. The next result can be readily extracted from the work of Chekanov \cite{Chekanov1996} and Eliashberg-Gromov \cite{EliashbergGromov1998} (see also the proof by Chaperon and Théret \cite{chaperon}).

\begin{thm}\label{thm:chekanovEG}
Let $M$ be a manifold without boundary,
and $f^t:M \times \R^n \to \R$ a family of functions parametrised by $t\in[0,1]$ which generates
a family of Legendrian immersions $(\varphi\times z)^t\colon L\to J^1 M$.
For any compactly supported contact isotopy $(\theta^t)_{t\in [0,1]}$
of $J^1 M$ with $\theta^0=\id$, there exist compactly supported functions
$(\eta^t)_{t\in[0,1]}:M \times \R^{n} \times \R^{2m}\to \R$ with
$\eta^0=0$ such that the function $g^t\colon M \times \R^{n} \times \R^{2m}\to \R$, given by
\begin{align*}
&\quad g^t(x,u,x_1,y_1,\dots,x_m,y_m)\\
& = f^t(x,u)+x_1y_1+\dots+x_my_m+\eta^t(x,u,x_1,y_1,\dots,x_m,y_m),
\end{align*}
is a generating function for $\theta^t\circ(\varphi\times z)^t$. \qed
\end{thm}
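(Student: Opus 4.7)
The plan is to follow Chekanov's iterative composition strategy: decompose the contact isotopy into many small pieces, each of which can be absorbed by adding a single pair of fiber variables with a standard quadratic coupling term.

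First, I would reduce to the case of small contactomorphisms. By compactness of $[0,1]$ and of $\supp(\theta^t - \id)$, and smooth dependence on $t$, one can choose a subdivision $0 = t_0 < t_1 < \cdots < t_m = 1$ fine enough that, for each $k$ and each $t \in [t_{k-1}, t_k]$, the contactomorphism
\begin{equation*}
\psi^t_k \;=\; \theta^t \circ (\theta^{t_{k-1}})^{-1}
\end{equation*}
is $C^1$-close to $\id$ in a uniform sense. Then $\theta^t$ is the ordered composition of the $\psi^t_k$'s, and it suffices to prove, by induction on $k$, that one application of such a small $\psi^t$ to a generating family costs exactly one new pair $(x_k, y_k)$ with quadratic coupling $x_k y_k$.

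Second, I would invoke the composition formula for a single small contactomorphism. The graph of a contactomorphism $\psi$ of $J^1 M$ that is $C^1$-close to $\id$ is a Legendrian in the appropriate product contact manifold which is close to the diagonal; as such, it admits a generating function of the form $\langle x - x_1, y_1 \rangle + s(x, x_1, y_1)$ with $s$ smooth, compactly supported in all variables, and vanishing identically when $\psi = \id$. Given a generating function $f^t$ for the Legendrian $(\varphi \times z)^t$, the composition formula then asserts that
\begin{equation*}
g^t(x, u, x_1, y_1) \;=\; f^t(x_1, u) \;+\; \langle x - x_1, y_1 \rangle \;+\; s^t\bigl(x, x_1, y_1, f^t(x_1, u)\bigr)
\end{equation*}
is a generating function for $\psi^t \circ (\varphi \times z)^t$. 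The critical-point calculation is essentially formal: $\del_u g^t = 0$ recovers $\del_u f^t = 0$, the equations in $x_1$ and $y_1$ reproduce the Hamilton--Jacobi-type relations defining the graph of $\psi^t$, and the $1$-jet of $g^t$ in $x$ then reads off the image Legendrian. Fiberwise transversality of the derivative vanishing is preserved because $\langle x - x_1, y_1\rangle$ contributes the nondegenerate block $\begin{pmatrix} 0 & I \\ I & 0\end{pmatrix}$ in the Hessian and $s^t$ is a small perturbation.

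Third, after iterating this construction $m$ times one obtains
\begin{equation*}
g^t(x, u, x_1, y_1, \ldots, x_m, y_m) \;=\; f^t(x, u) + \sum_{k=1}^m x_k y_k + \eta^t(x, u, x_1, y_1, \ldots, x_m, y_m),
\end{equation*}
where $\eta^t$ gathers the contributions of the $s^t_k$'s. Compact support of each $s^t_k$ (which follows from compact support of $\psi^t_k - \id$) and smooth dependence on $t$ yield compact support of $\eta^t$; since $\theta^0 = \id$, one has $s^0_k \equiv 0$ for every $k$, so $\eta^0 = 0$. The main obstacle is the precise formulation of the composition formula and verification that the resulting $g^t$ is indeed a generating function of $\psi^t(\Lambda^t)$; this amounts to an explicit critical-point bookkeeping together with a $C^1$-smallness argument to guarantee that the perturbed Hessian remains nondegenerate. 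Both ingredients are carried out in \cite{Chekanov1996} and \cite{EliashbergGromov1998}, to which we refer for the full details.
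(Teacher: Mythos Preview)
The paper does not supply its own proof of this theorem: it is stated with a \qed{} and attributed to Chekanov \cite{Chekanov1996} and Eliashberg--Gromov \cite{EliashbergGromov1998}. Your sketch---subdividing the isotopy into $C^1$-small pieces and absorbing each via the composition formula with one extra pair of fibre variables coupled by $x_ky_k$---is exactly the Chekanov argument those references contain, so your proposal is aligned with what the paper invokes.
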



The following lemma will be key to our proof of the homotopy lifting property for twisted generating functions linear at infinity.

\begin{lem}\label{lem:reorder}
Let $(b,M_i,f_i,q_{ij})$ be a well-ordered generating function linear at infinity over $M$ with indexing set $I$.
For any other total order $\leq'$ on $I$, there exists (up to a negligible shrinking of the cover)
a well-ordered twisted generating function linear at infinity $(b,M_i,f_i',q'_{ij})$ such that $(f'_i,q'_{ij})$ and $(f_i,q_{ij})$
generate the same Legendrian immersion and have equivalent underlying $\QQ$-twisted maps $M\to \Z$.
\end{lem}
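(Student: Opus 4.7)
The plan is to reduce the statement to a single adjacent transposition and then perform the swap by a doubling trick for quadratic forms. Since $I$ is finite and totally ordered by both $\le$ and $\le'$, the identity bijection from $(I,\le)$ to $(I,\le')$ factors as a composition of transpositions of indices that are consecutive in the current order. As the property ``generates the same Legendrian and has equivalent $\QQ$-twisted map'' is transitive under such a sequence, it suffices to treat a single swap of $i$ and $j = i+1$ in the original order, with $n_i \le n_j$ and $d = n_j - n_i$.

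For the swap, the key observation is that for any non-degenerate quadratic form $q$, the form $q \oplus (-q)$ is hyperbolic, hence isotopic through non-degenerate forms of the same dimension to the standard $h^{\dim q}$. Consequently, after a negligible shrinking of $M_i$, the family $q_{ij} \oplus (-q_{ij}) \colon M_{ij} \to \QQ_{2d}$ can be extended to a smooth non-degenerate family $\alpha \colon M_i \to \QQ_{2d}$ of hyperbolic forms (any remaining topological obstruction is killed by first absorbing enough factors of $h$ uniformly, using the contractibility of stabilized spaces of hyperbolic forms). Setting
\[
f'_i = f_i \oplus_b \alpha, \quad f'_j = f_j, \quad q'_{ji} = -q_{ij}, \quad n'_i = n_i + 2d, \quad n'_j = n_j,
\]
we get $n'_j \le n'_i$ and, by the associativity of $\oplus_b$ (Lemma~\ref{lem:bound}),
\[
f'_j \oplus_b q'_{ji} = f_j \oplus_b (-q_{ij}) = f_i \oplus_b \bigl(q_{ij} \oplus (-q_{ij})\bigr) = f_i \oplus_b \alpha = f'_i
\]
on $M_{ij}$, as required for the new cocycle. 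For $k < i$ we take $q'_{ki} = q_{ki} \oplus \alpha|_{M_{ki}}$ (unchanged $f'_k = f_k$), and we leave $q'_{kj} = q_{kj}$ so that the triple relations involving such $k$ hold automatically. For $k > j$, where the naive update would demand the restrictive splitting $\alpha \oplus q'_{ik} = q_{ik}$, we pre-stabilize $f_k$ on the left by a constant copy of $h^N$ with $N$ large; this preserves the generated Legendrian by Theorem~\ref{thm:chekanovEG} and restores the well-ordering $n'_i \le n'_k$ together with the required splitting after possibly further enlarging $\alpha$ by a constant summand.

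The main obstacle is the global consistency of the triple cocycle conditions once the purely local modification on the pair $(i,j)$ is propagated to all other overlaps, and in particular constructing the extension $\alpha$ so that it restricts compatibly on every $M_{ik}$ with $k > j$. This is resolved by the combination of two tools already at hand: the freedom to stabilize uniformly by $h^N$ (which preserves both the Legendrian immersion and the homotopy class of the classifying map to $|B(\Z,\QQ)|$) and the isotopy between $q \oplus (-q)$ and $h^{\dim q}$, which allows the deformation of cocycles through non-degenerate forms. The equivalence of underlying $\QQ$-twisted maps $M \to \Z$ then follows from Proposition~\ref{prop:QF-bundles}: each of our modifications is a stabilization by non-degenerate quadratic forms (of signature zero, up to additions of $h$), and therefore induces the identity on the homotopy class of the map $M \to |B(\Z,\QQ)|$.
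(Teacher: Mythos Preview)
Your reduction to a single adjacent transposition and the use of the hyperbolic isotopy $q \oplus (-q) \simeq h^{\dim q}$ mirror the paper's approach. However, the argument breaks down at the step you yourself flag as ``the main obstacle'': propagating the swap to indices $k > j$.

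The concrete gap is this. You extend $\alpha = q_{ij}\oplus(-q_{ij})$ only over $M_i$, but for $k > j$ you need $q'_{jk}$ to be defined on all of $M_{jk}$ and $f'_k$ on all of $M_k$, neither of which is contained in $M_i$. With $f'_j = f_j$ and $q'_{ji} = -q_{ij}$, the cocycle forces $q'_{jk} = -q_{ij} \oplus q'_{ik}$ on $M_{ijk}$, yet there is no prescription on $M_{jk}\setminus M_i$. Your proposed fix, ``pre-stabilize $f_k$ on the left by $h^N$'', does not close this: left stabilization gives $f'_k = h^N \oplus_b f_k$, which places the new variables \emph{before} the old ones, while $f'_i \oplus_b q'_{ik}$ and $f'_j \oplus_b q'_{jk}$ place them \emph{after}. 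No choice of $q'_{ik}$ can reconcile this without reordering variables, and the reference to Theorem~\ref{thm:chekanovEG} (which concerns contact isotopies, not stabilization) is misplaced in any case.

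The paper resolves both problems at once. It extends $\delta q_{i,i+1} = q_{i,i+1}\oplus(-q_{i,i+1})$ to \emph{all} of $M$ (using that the map $M_{i,i+1}\to\QQ_m$ is nullhomotopic), so that $f'_k = (f_k \oplus_b \delta q_{i,i+1})\circ a_k^{-1}$ is defined globally on $M_k$ for every $k > i+1$. The explicit coordinate permutations $a_k$ and $a_{k,i,l}$ are exactly what is needed to make the direct sums land in the correct slots, and the paper writes out the four cases for $q'_{kl}$ and verifies the cocycle. These permutations exploit the symmetry property $\chi_{q\circ\sigma} = \chi_q\circ\sigma$ from Lemma~\ref{lem:cutoffq}, which is why $\oplus_b$ behaves well under them. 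Your sketch contains the right germ (extend the hyperbolic form, then stabilize), but omits the global extension and the permutation bookkeeping that make the cocycle close on every overlap.
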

\begin{proof}
Denoting the indexing set of the cover by $I=\{1,\dots,p\}$, it suffices to prove the existence of an equivalent generating function associated to the permutation of a pair of successive elements $(i, i+1)$. So for the new order we have $i+1<'i$. Let $m=2(n_{i+1}-n_i)$. The map $q_{i,i+1} \oplus -q_{i,i+1}\colon M_{i,i+1} \to \QQ_m$ is nullhomotopic, so (up to shrinking the cover) we can extend it to $M$; we denote the extension by $\delta q_{i,i+1}$. If $M_{i,i+1}$ is empty we take $\delta q_{i,i+1}$ to be a constant quadratic form on $\R^m$. We will add quadratic forms such that $n_j' = n_j$ when $j<i$ or $j=i+1$ and $n_j'=n_j+m$ when $j>i+1$ or $j=i$. We define $q'_{kl}$ as follows:
\begin{equation} q'_{kl}=
\begin{cases}
 q_{kl} &\text{ if   $k\geq' i$ or $l\leq' i+1$},\\
-q_{i,i+1}&\text{ if $k=i+1$ and  $l=i$},\\
-\delta q_{i,i+1} \oplus q_{kl} &\text{ if $k=i+1$ and $l>i+1$} ,\\
(q_{kl}\oplus \delta q_{i,i+1})\circ a_{k,i,l}^{-1}& \text{ if $k<i$ and $l\geq' i$},
\end{cases}
\end{equation}
where $a_{k,i,l}\colon \R^{n_{ki}}\times \R^{n_{il}}\times \R^m \to \R^{n_{ki}}\times \R^m \times \R^{n_{il}}$ is given
by $a_{k,i,l}(v,w,u)=(v,u,w)$. The permutation $a_{k,i,l}$ has the following effect: on $M_{k,i,l}$,
\[(q_{k,l}\oplus \delta q_{i,i+1}) \circ a_{k,i,l}^{-1}=(q_{k,i}\oplus q_{i,l}\oplus \delta q_{i,i+1})\circ a_{k,i,l}^{-1}=q_{k,i}\oplus \delta q_{i,i+1} \oplus q_{i,l}.\]

We can check that $q'_{jk} \oplus q_{kl}' = q_{jl}'$ when $j<'k<'l$. The reader can notice that
for consecutive indices $(k,k_+)$ with respect to the order $<'$, we have
\[q'_{k,k_+}=q_{k,k_+} \quad \text{for $k\not=i+1$,}  \qquad q'_{i+1,i}=-q_{i,i+1}. \]
This essentially imposes the above formula for $q'_{kl}$ with general indices $k,l$.

Similarly we define
\begin{equation} f'_k=
\begin{cases}
 f_k &\text{ if } k\leq' i+1,\\
 (f_k\oplus_b \delta q_{i,i+1})\circ a_k^{-1} &\text{otherwise,}
\end{cases}
\end{equation}
where $a_k\colon \R^{n_i}\times \R^{n_{ik}} \times \R^m \to \R^{n_i}\times \R^m \times \R^{n_{ik}} $
is given by $a_k(v,w,u)=(v,u,w)$. We can check that $f'_k \oplus_b q'_{kl} =f'_l$ for all $k<'l$.
Note, that this uses the properties of $\chi_m$ with respect to permutations and direct sums.
This generates the same Legendrian immersion due to Lemma~\ref{lem:bound}.

The equivalence of $(n_i,q_{ij})$ with $(n'_i,q'_{ij})$ can be proved directly by a similar construction
which we only sketch. Consider the open cover of $M\times [0,1]$ given by the open sets $M_i\times [0,2/3)$
and $M_i\times (1/3,1]$, indexed by $I\sqcup I$ ordered as
\[1<1'<2<2'<\dots<(i-1)'<i<i+1<(i+1)'<i'<i+2<\dots<p<p'.\]
We define the quadratic forms for consecutive indices in $I\sqcup I$ as:
\begin{align*}
q_{k,k'}&=0 \text{ for } k\neq i,\\
q_{k',k+1}&=q_{k,k+1} \text{ for } k\neq i,i+1,\\
q_{i',i+2}&=q_{i,i+2}
\end{align*}
and the same formula as before for $q_{kl}$ and $q_{k'l'}=q'_{kl}$. The remaining terms are now essentially
prescribed and can be made explicit using permutations as above.
\end{proof}

\begin{rem}
The above lemma works equally well for a $1$-parametric family of functions $f_i^t$, $t\in[0,1]$,
with $\QQ$-bundle $(M_i,q_{ij})$ independent of $t$.
\end{rem}

\begin{thm}\label{thm:twistedchekanov}
Let $L$ and $M$ be closed manifolds, $\varphi\times z\colon L\to J^1 M$ a Legendrian
immersion and $\theta^t:J^1 M\to J^1 M$, $t\in[0,1]$, a compactly supported contact isotopy with $\theta^0=\id$.
Given a twisted generating function linear at infinity $(b,f_i,q_{ij})$ for $\varphi\times z$,
there exists a twisted generating function linear at infinity $(c,g_k^t,p_{kl})$ for $\theta_t\circ(\varphi\times z)$
with equivalent underlying $\QQ$-twisted map from $M$ to $\Z$.
\end{thm}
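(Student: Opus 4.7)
The plan is to prove the theorem by induction on the indices of a well-ordered twisted generating function, applying Theorem~\ref{thm:chekanovEG} at each step and gluing via the fixed cocycle $(q_{ij})$. By Lemma~\ref{lem:well-order}, I may assume $(b,f_i,q_{ij})$ is well-ordered with finite indexing set $I=\{1,\dots,p\}$ and $n_1\le\cdots\le n_p$. Since $M$ is compact and $\theta^t$ is compactly supported, I fix a single integer $N$ large enough to serve as the Chekanov--Eliashberg--Gromov stabilization parameter simultaneously for all the $f_i$.

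For the base case, Theorem~\ref{thm:chekanovEG} applied to $f_1$ with $m=N$ produces a compactly supported perturbation $\eta_1^t$ (with $\eta_1^0=0$) such that $f_1\oplus h^N+\eta_1^t$ generates $\theta^t\circ(\varphi\times z)$ on $M_1$. To maintain linearity at infinity, I replace $\oplus$ by $\oplus_c$ for a bound $c$ chosen large enough to dominate $b$ and to ensure $\chi_{h^N}(c^{-1}u)\equiv 1$ on $\supp\eta_1^t$, so that $g_1^t:=(f_1\oplus_c h^N)+\eta_1^t$ is linear at infinity and, by Lemma~\ref{lem:bound}, generates the same Legendrian. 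Choosing $c$ once and for all, one bound works uniformly. Inductively, suppose $g_1^t,\dots,g_i^t$ have been constructed as generating functions linear at infinity with common bound $c$, satisfying $g_j^t\oplus_c q_{jk}=g_k^t$ on $M_{jk}$ (up to negligible shrinking) for all $j<k\le i$. The cocycle identities for the $q_{jk}$ then force $g_{i+1}^t$ to equal $g_j^t\oplus_c q_{j,i+1}$ in a neighborhood of $\bigcup_{j\le i}M_{j,i+1}\times\{0\}\subset M_{i+1}\times\R\times\R^{n_{i+1}+2N}$, and these prescriptions are mutually consistent on triple intersections.

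The main obstacle is then a relative version of Theorem~\ref{thm:chekanovEG}: given the germ along the closed set $A=\bigcup_{j\le i}\overline{M_{j,i+1}}$ of the desired compactly supported perturbation, extend it to a global compactly supported perturbation $\eta_{i+1}^t$ so that $(f_{i+1}\oplus_c h^N)+\eta_{i+1}^t$ generates $\theta^t\circ(\varphi\times z)$ on all of $M_{i+1}$. This relative lift follows from an inspection of the proof of Chekanov--Eliashberg--Gromov, which is local on the $M$-factor and proceeds by integrating time-dependent fiberwise vector fields coming from the contact Hamiltonian of $\theta^t$: at each step the vector field can be cut off to vanish in a neighborhood of $A$ where the desired lift is already prescribed, giving the required relative extension. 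Setting $p_{j,i+1}:=q_{j,i+1}$ extends the cocycle, and after $p$ inductive steps the result is a twisted generating function linear at infinity $(c,g_k^t,q_{kl})$ whose underlying $\QQ$-twisted map is $(n_i+2N,q_{ij})$; this is equivalent to $(n_i,q_{ij})$ because the uniform shift of dimensions corresponds to the self-map $x\mapsto x+2N$ of $\Z$, which is homotopic to the identity on $|B(\Z,\QQ)|$ by iteration of Lemma~\ref{lem:translation}. If a specific ordering of $I$ is required, Lemma~\ref{lem:reorder} can be applied at the end without disturbing either the Legendrian or the equivalence class.
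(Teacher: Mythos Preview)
Your approach differs substantially from the paper's. The paper does \emph{not} apply Theorem~\ref{thm:chekanovEG} chart by chart. Instead it first \emph{fragments} the isotopy as $\theta^t=\theta_N^t\circ\cdots\circ\theta_0^t$ with each $\theta_k^t$ compactly supported in $J^1 M_{i(k)}$ for a single index $i(k)$, and inducts on $k$. At each step it invokes Lemma~\ref{lem:reorder} to move $i(k)$ to position $1$, applies Theorem~\ref{thm:chekanovEG} \emph{once} to $f_1^t$ alone, and then defines $g_i^t$ for $i\ge 2$ by the formula $h^m\oplus_c f_i^t$ on $M_i\setminus K'$ and $g_1^t\oplus_c q_{1i}$ on $M_{1i}$; these glue because the new bound $c$ is chosen equal to $b$ outside a compact neighbourhood $K'$ of the support of the perturbation. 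No relative version of Chekanov--Eliashberg--Gromov is ever needed.

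The genuine gap in your proposal is precisely that relative step. Your justification---cutting off the CEG vector field near $A=\bigcup_{j\le i}\overline{M_{j,i+1}}$---does not produce the prescribed function there: the prescribed germ is $g_j^t\oplus_c q_{j,i+1}$, obtained by running CEG on $f_j$ (a function of $n_j$ fibre variables) and \emph{afterwards} stabilising by $q_{j,i+1}$, whereas the vector field you propose to cut off is the one arising from CEG applied to $f_{i+1}$ (a function of $n_{i+1}$ fibre variables). These two processes have no reason to agree on the overlap, and interpolating between them destroys the generating property in the transition region. One could try to argue that CEG commutes with stabilisation by $q_{j,i+1}$ (this is essentially true in the Chaperon broken-geodesic model), but that is a lemma requiring proof, not ``an inspection''; and you would still have to deal with the variable ordering, since $g_j^t\oplus_c q_{j,i+1}$ has the $q_{j,i+1}$-variables to the right of the $h^N$-variables while your $(f_{i+1}\oplus_c h^N)+\eta_{i+1}^t$ has them to the left. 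The paper's fragmentation-plus-reordering device sidesteps both difficulties entirely: since each $\theta_k^t$ is supported over a single chart, the CEG perturbation $\eta^t$ lives over $M_1$ alone, and the remaining $g_i^t$ are defined by pure stabilisation without ever having to match two independently produced perturbations.
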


\begin{proof}
Using Lemma~\ref{lem:well-order} we may assume that the cover $(M_i)_{i\in I}$
with respect to which the twisted generating function is defined is finite and
totally ordered, namely $I=\{1,\dots,p\}$ with the natural order.

As a first step, we fragment the isotopy $\theta^t$ and write it as a product
\[\theta^t=\theta_N^t\circ \cdots \circ \theta_k^t\circ \cdots \circ \theta_0^t\]
such that, for each $k$, $\theta_k^t$ has compact support in $J^1 M_i$ for some $i$.
The proof now works by induction on $k$. Assume we have constructed the desired twisted
generating function $(b,f_i^t,q_{ij})$ for $\theta_{k-1}^t\circ\cdots\circ\theta_0^t\circ (\varphi\times z)$
for some $k\geq 0$ (for $k=0$, we take the given twisted generating function independent of $t$).
Using Lemma~\ref{lem:reorder} we may assume that $\theta_k^t$ has compact support in $J^1 M_1$.

We now apply Theorem~\ref{thm:chekanovEG} to $f_1$ with $M=M_1$
to obtain $G^t_1:M_1\times \R \times \R^{2m}\times \R^{n_1}\to \R$, a generating function for
$\theta^t_k\circ\dots\circ\theta_0^t\circ(\varphi\times z)$ over $M_1$ of the form, $G^t_1=(h^m\oplus f^t_1) + \eta^t$, namely
\[G^t_1(x,w,x_i,y_i,v_1)=x_1y_1+\dots +x_my_m+f^t_1(x,w,v_1)+\eta^t(x,w,v,x_i,y_i,v_1),\]
with a compactly supported function $\eta$.
For a smooth function $c:M\to [1,+\infty)$, we set
\[G_1^{t,c}=(h^m\oplus_c f^t_1)+\eta^t,\]
which is linear at infinity.
Denote by $K$ the projection of the support of $\eta$ to $M_1$.
In view of Lemma~\ref{lem:bound}, if $c\geq b$ on $M_1$ and $c$ is large enough on $K$, the function $G_1^{t,c}$
still generates $\theta_k^t\circ\dots\circ\theta_0^t\circ(\varphi\times z)$ over $M_1$.
We pick such a function $c:M_1\to [1,+\infty)$ with the additional property that $c=b$ outside a compact neighborhood $K'$ of $K$ in $M_1$,
and define $g_1^t=G_1^{t,c}$.

For $i\geq 2$, we set $g_i^t=h^m \oplus_c f^t_i$ if $x\in M_i\setminus K'$
and $g^t_i=g^t_1 \oplus_c q_{1i}$ if $x\in M_{1i}$.
This agrees and glues correctly over $M_{1i}\setminus K'$ since on this set we have 
\begin{align*}
  h^m \oplus_c f^t_i = h^m \oplus_c f^t_1 \oplus_c q_{1i} = g^t_1 \oplus_c q_{1i}
\end{align*}
and $(c,g^t_i,q_{ij})$ is a twisted generating function linear at infinity
for $\theta^t_k\circ\dots\circ\theta_0^t\circ(\varphi\times z)$. Note that the underlying $\QQ$-twisted map
from $M$ to $\Z$ has changed only when we have applied Lemma~\ref{lem:reorder} and also
by the translation by $2m$ (due to the $h^m$ stabilization) but this does not change the equivalence class
in view of Lemma~\ref{lem:translation}.
\end{proof}

\subsection{The doubling trick}

\begin{dfn}\label{dfn:double}
For $s>0$, the \emph{$s$-double} of a Legendrian immersion $\varphi\times z\colon L\to J^1 M$
is the Legendrian immersion of the disjoint union $L\sqcup L$ given by $\varphi\times (z\pm s)$.
\end{dfn}
For a Legendrian embedding $L\to J^1 M$ and small $s>0$, this is also known as the $2$-copy
of $L$. The doubling trick converts a generating function for a Legendrian immersion into
a generating function linear at infinity for the $s$-double with $s>0$ small enough.

\begin{figure}[h]
  \centering
  \begin{tikzpicture}[
  declare function={
    func(\x)= (\x<=-2) * \x   +
    and(\x>-2, \x<=-1) * (-18 - 8*\x*\x*\x - 36*\x*\x - 48*\x)     +
    and(\x>-1,  \x<=1) * (\x*\x*\x - 3*\x) +
    and(\x>1,  \x<=2) * (18 - 8*\x*\x*\x + 36*\x*\x - 48*\x) +
                (\x>2) *  \x;
  }
]
\begin{axis}[
  axis x line=middle, axis y line=middle,
  ymin=-3, ymax=3, ytick={-3,...,3}, ylabel=$D(w)$,
  xmin=-3, xmax=3, xtick={-3,...,3}, xlabel=$w$,
]
\pgfplotsinvokeforeach{-2, -1, 1, 2}{
  \draw[dashed] ({rel axis cs: 0,0} -| {axis cs: #1, 0}) -- ({rel axis cs: 0,1} -| {axis cs: #1, 0});}
\addplot[blue, domain=-3:3, smooth]{func(x)};
\end{axis}
\end{tikzpicture} 
  \caption{The graph of the function $D$}
  \label{fig:function-D-graph}
\end{figure}

To implement this trick (and also to discuss tubes later) we pick a
smooth function $D: \R \to \R$ such that:
\begin{itemize}
\item $D(w)= w^3-3w$ for $|w|\leq 1$,
\item $D'(w)>0$ for $|w|>1$, and
\item $D(w) = w$ when $|w|>2$.
\end{itemize}
The function
\begin{align} \label{d:eq}
  D_t(w) = w +(\tfrac14 +t)(D(w)-w)
\end{align}
has no critical points for $-\tfrac14 \leq t<0$ and two critical points $\pm r(t)$ close to 0 (explicitly $r(t)=\frac{4}{\sqrt{3}}\sqrt{\frac{t}{1+4t}}$)
with critical values $\pm s(t)$ close to 0 for small $t>0$.

Now let $U \subset M \times \R^n$ be open and $g:U \to \R$ a generating function for a Legendrian $L \subset J^1M$ with $L \to M$ proper. Let $g': M \times \R^n \to \R$ be an arbitrary smooth extension of the germ of $g$ around $\Sigma_g$. This may generate a lot more than $L$, but $\Sigma_g \subset \Sigma_{g'}$ is isolated. Let $\alpha : M\times \R^n \to [0,1]$ be a smooth function which equals $1$ in a neighborhood of $\Sigma_g$, and whose support is proper over $M$ and is contained in the locus where $g$ and $g'$ agree. By construction the support of $\alpha$ intersects $\Sigma_{g'}$ along $\Sigma_g$. For small $t>0$ we define a function
\begin{align}
  f^t & \colon M \times \R\times \R^n  \to \R \\   \label{eq:3}
    f^t(x,w,v) & = g'(x,v) + w + (\tfrac14+t)\alpha(x,v)(D(w)-w) .   
\end{align}
These functions are linear at infinity since the support of $\alpha(x,v)(D(w)-w)$ maps properly to $M$.

\begin{lem} \label{compact-generation}
For any compact $K \subset M$ and $t>0$ small enough, the function $f^t$ generates the $s(t)$-double of $L$ over $K$.
\end{lem}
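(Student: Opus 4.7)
The plan is to analyze the critical set of $f^t$ in $K \times \R \times \R^n$ directly and then compute its Legendrian lift. Computing the fiber derivatives gives
\begin{align*}
\partial_w f^t &= 1 + (\tfrac14+t)\alpha(x,v)(D'(w)-1), \\
\partial_v f^t &= \partial_v g'(x,v) + (\tfrac14+t)(\partial_v \alpha)(x,v)(D(w)-w),
\end{align*}
and I would split $M \times \R^n$ into three pieces according to the value of $\alpha$: the open set $A = \{\alpha = 1\}$, the intermediate region $\{0 < \alpha < 1\}$, and the exterior of $\supp \alpha$. On $A$, both partial derivatives of $\alpha$ vanish, so $\partial_w f^t$ reduces to $D_t'(w)$, whose zeros are $\pm r(t)$, and $\partial_v f^t$ reduces to $\partial_v g$, which vanishes exactly on $\Sigma_g$. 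Outside $\supp \alpha$, $\partial_w f^t = 1$ never vanishes.

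The main obstacle is the intermediate region $\{0 < \alpha < 1\}$, which I would dispose of by compactness. The equation $\partial_w f^t = 0$ forces $D'(w) = 1 - 1/((\tfrac14+t)\alpha)$; using the global lower bound $D' \geq -3$ (which follows from $D(w) = w^3 - 3w$ on $[-1,1]$ and $D' > 0$ outside), this is solvable only when $\alpha \geq 1/(1+4t)$. Now $(\supp \alpha \setminus A) \cap (K \times \R^n)$ is compact because $\supp \alpha$ is proper over $M$ and $A$ is open; on this compact set $\alpha$ attains a maximum $\alpha^* < 1$. Choosing $t$ small enough that $1/(1+4t) > \alpha^*$ eliminates every candidate critical point from the intermediate region. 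Hence for such $t$, the critical set of $f^t$ over $K$ is exactly $\{(x, \pm r(t), v) : (x,v) \in \Sigma_g, \; x \in K\}$.

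It remains to check transversality and identify the Legendrian lift. On $A$ the function simplifies to $f^t(x,w,v) = g(x,v) + D_t(w)$, so the fiber Hessian at a critical point is block diagonal with the entry $D_t''(\pm r(t)) \neq 0$ along $w$ and the fiber Hessian of $g$ along $v$; the transversality of $\partial f^t / \partial(w,v)$ at $(x, \pm r(t), v)$ thus follows from the transversality of $\partial g/\partial v$ at $(x,v)$, which holds because $g$ is a generating function. For the Legendrian lift, again on $A$ we have $\partial_x f^t = \partial_x g$ and $f^t(x, \pm r(t), v) = g(x,v) \pm s(t)$, so the Legendrian immersion produced is $(x,v) \mapsto (x, \partial_x g(x,v), g(x,v) \pm s(t))$ on $\Sigma_g \cap (K \times \R^n)$, which is precisely the restriction of the $s(t)$-double of $L$ over $K$.
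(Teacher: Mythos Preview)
Your argument for ruling out the intermediate region $\{0<\alpha<1\}$ has a genuine gap. The set $A=\{\alpha=1\}$ is \emph{closed}, not open (it is the preimage of the closed point $\{1\}$), so $(\supp\alpha\setminus A)\cap(K\times\R^n)$ is not compact. Even if you replace $A$ by its interior, the resulting set is compact but then contains boundary points where $\alpha=1$, so the maximum $\alpha^*$ equals $1$ and the inequality $1/(1+4t)>\alpha^*$ is impossible for any $t>0$. The underlying problem is that the equation $\partial_w f^t=0$ alone only forces $\alpha\geq 1/(1+4t)$, and since $\alpha$ takes values arbitrarily close to $1$ on $\{0<\alpha<1\}$, you cannot exclude critical points there from the $w$-equation by itself.

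The paper's argument uses \emph{both} critical equations. On the closure $W$ of $\{0<\alpha<1\}$ over $K$ (which is compact and disjoint from $\Sigma_g$, since $\alpha\equiv 1$ on a neighbourhood of $\Sigma_g$), one has a uniform lower bound $\|\partial_v g'\|>c>0$. From $\partial_w f^t=0$ one deduces that $w$ is close to $0$ (indeed $w=\pm r(t')$ for some $0<t'\leq t$), so $|D(w)-w|$ is small; feeding this into $\partial_v f^t=0$ gives
\[
\|\partial_v g'\| = (\tfrac14+t)\,|D(w)-w|\,\|\partial_v\alpha\|,
\]
whose right-hand side is small for small $t$ since $\partial_v\alpha$ is bounded on $W$. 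This contradicts the lower bound $c$, and that is what eliminates the intermediate region. Your argument is missing this use of the $v$-equation; the rest of your proof (the analysis on $\{\alpha=1\}$, transversality, and the identification of the Legendrian lift) is fine.
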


\begin{proof}
  We first check that for small $t>0$ the fiberwise critical set over $K$ of $f^t$ is precisely
  \[\Sigma_{f^t}=\left\{(x,w,v) \in M\times \R\times \R^k; \; (x,v)\in \Sigma_g, \, D_t'(w) = 0 \right\}\]
  which is two copies of $L$. In the following all functions are restricted to the sets over $K$ (hence proper to $M$ implies compact).

  Note that on the closure $W$ of the set $0 < \alpha < 1 $ the gradient $\frac{\del g'}{\del v}=\frac{\del g}{\del v}$ is non-zero and hence there is a $c>0$ such that
  \begin{equation} \label{eq:bound}
    \norm{\frac{\del g'}{\del v}} > c
  \end{equation}
  on this set. Having $\frac{\del f^t}{\del w}(x,w,v)=0$ implies $\alpha(x,v)\geq \tfrac{1}{1+4t}>0$, so $(x,v)$ lies in the support of $\alpha$.
Setting $t'$ such that $\tfrac14+t'=(\tfrac14+t)\alpha$ we have $0<t'\leq t$ and $w=\pm r(t')$ tends to $0$ as $t$ goes to $0$.
If furthermore $\frac{\del f^t}{\del v}(x,w,v)=0$, then
  \begin{equation}\label{ineq}
    \left\|\frac{\del g'}{\del v}(x,v)\right\|= |D(w)-w|\cdot (\tfrac14+t) \cdot \left\|\frac{\del \alpha}{\del v}(x,v)\right\|
  \end{equation}
  But $\frac{\del \alpha}{\del v}$ is bounded on $W$ and $|D(w)-w|\to 0$ as $w\to 0$, so in view of \eqref{eq:bound}, for $t$ small enough, 
  the latter equality can hold only on the region where $\alpha=1$. This means that $f^t(x,w,v)=g(x,v) + D_t(w)$. So the singular set of $f^t$ is as asserted, and
  we compute that for $(x,w,v)\in \Sigma_{f^t}$,
  \[f^t(x,w,v)=f(x,v)\pm s(t).\]
It is easily checked that $\Sigma_{f^t}$ is transversely cut out since $g$ transversely cuts out $\Sigma_g$.
\end{proof}

We note that the following corollary only depends on Theorem~\ref{thm:girouxlatour} and the short construction in this section.

\begin{cor} \label{stable-trivial-linear}
  Let $M$ be a manifold, $L$ a closed manifold and $L\to J^1 M$ a Legendrian immersion
with nullhomotopic stable Gauss map. Then for small $s>0$, the $s$-double of $L$ admits a generating function linear at infinity.
\end{cor}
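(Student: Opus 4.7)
The plan is to combine Giroux--Latour (Theorem~\ref{thm:girouxlatour}) with the doubling construction from equations \eqref{d:eq}--\eqref{eq:3} and Lemma~\ref{compact-generation}. Since the stable Gauss map of $L\to J^1 M$ is nullhomotopic, Theorem~\ref{thm:girouxlatour} supplies a generating function $(n,U,g)$ for the Legendrian immersion, and $\Sigma_g\cong L$ is compact because $L$ is closed. I then run the doubling construction verbatim: pick an extension $g'\colon M\times \R^n\to \R$ of the germ of $g$ near $\Sigma_g$, a cutoff $\alpha$ as prescribed, and set
\begin{equation*}
  f^t(x,w,v)=g'(x,v)+w+(\tfrac14+t)\alpha(x,v)(D(w)-w).
\end{equation*}

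The delicate point, which requires care when $M$ is non-compact, is to arrange that $f^t$ is a generating function \emph{globally} on $M\times \R\times \R^n$ rather than merely over a compact subset of $M$. Because $\Sigma_g$ is compact I may take $\alpha$ to have compact support in $M\times \R^n$ (not merely proper over $M$), while still equalling $1$ near $\Sigma_g$ and being contained in the locus where $g'=g$. With such a choice the perturbation $\alpha(x,v)(D(w)-w)$ is compactly supported in $M\times \R\times \R^n$, so $f^t$ is linear at infinity in the sense of Definition~\ref{dfn:linear}.

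To verify that $f^t$ generates the $s(t)$-double of $L$ globally, pick a compact $K\subset M$ containing the projection of $\supp \alpha$. Lemma~\ref{compact-generation} says that for $t>0$ sufficiently small the fiberwise singular set of $f^t$ over $K$ consists of two copies of $L$ at $w=\pm r(t)$, generating the $s(t)$-double. Off $\supp \alpha$ the function reduces to $f^t(x,w,v)=g'(x,v)+w$ with $\del_w f^t\equiv 1$, so no singular points appear there. Therefore the global singular set of $f^t$ is exactly the $s(t)$-double. Since $t\mapsto s(t)$ is continuous with $s(t)\to 0^+$ as $t\to 0^+$, for every sufficiently small $s>0$ we can choose $t$ with $s(t)=s$, yielding the desired generating function linear at infinity.

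The main (mild) obstacle is precisely the previous paragraph: the conclusion demands a \emph{global} generating function linear at infinity, while Lemma~\ref{compact-generation} is stated over compact subsets of $M$. The closedness of $L$ is what bridges this gap, through the compactly supported cutoff $\alpha$ and through it the global control on the critical set of $f^t$.
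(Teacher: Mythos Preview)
Your proof is correct and follows exactly the paper's approach: invoke Giroux--Latour to obtain a generating function, then run the doubling construction with $\alpha$ chosen to have compact support (which is possible since $L$, hence $\Sigma_g$, is compact). The paper's proof is a one-line reference to Theorem~\ref{thm:girouxlatour} and Lemma~\ref{compact-generation} with the remark that $\alpha$ should be compactly supported over $M$; you have simply spelled out the reason this choice suffices to globalize the conclusion of Lemma~\ref{compact-generation}.
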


\begin{proof}
  This follows from Theorem~\ref{thm:girouxlatour} and the lemma above using an $\alpha$ picked to have compact support over $M$.
\end{proof}

However, the more interesting result is the following.

\begin{thm}\label{thm:twisted-double}
  Let $L$ be a closed manifold and $\varphi\times z\colon L\to J^1 M$ a Legendrian immersion which admits
  a generating function twisted by $h:M\to \Lambda_0(\C^\infty)$. Then for small $s>0$, the $s$-double of $L$
  admits a generating function linear at infinity twisted by $h$.
\end{thm}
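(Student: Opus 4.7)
The plan is to run the doubling construction of Lemma~\ref{compact-generation} in parallel on each piece of a well-ordered twisted generating function for $\varphi\times z$, and arrange the auxiliary extensions and cut-offs so that the pieces glue via the modified action $\oplus_b$ of Definition~\ref{dfn:plusb}. First I would apply Lemma~\ref{lem:well-order} to reduce to a finite, totally ordered index set $I=\{1,\dots,k\}$ with $n_i\leq n_j$ for $i<j$.

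Next, by induction on $j$, I would construct smooth global extensions $g'_j\colon M_j\times\R^{n_j}\to\R$ of germs of $f_j$ around $\Sigma_{f_j}$ such that
\[g'_j=g'_i\oplus q_{ij}\qquad\text{on } M_{ij}\times\R^{n_j}\text{ for every }i<j.\]
The cocycle identity $q_{ij}=q_{ik}\oplus q_{kj}$ ensures that the conditions imposed on $M_j\times\R^{n_j}$ by the various $i<j$ agree on triple intersections, so they may be imposed simultaneously and then extended freely. In a parallel inductive pass, I would build cut-off functions $\alpha_j\colon M_j\times\R^{n_j}\to[0,1]$ equal to $1$ near $\Sigma_{f_j}$, with fibrewise compact support contained in $\{g'_j=f_j\}$, subject to the multiplicative constraint
\[\alpha_j(x,v,u)=\alpha_i(x,v)\,\chi_{q_{ij}}(b^{-1}u)\qquad\text{on } M_{ij}\times\R^{n_j},\]
for a positive constant $b$ to be fixed. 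The identity $\chi_{q_1\oplus q_2}(u_1,u_2)=\chi_{q_1}(u_1)\chi_{q_2}(u_2)$ from Lemma~\ref{lem:cutoffq} makes these constraints consistent on triple overlaps, while $\chi_{q_{ij}}(0)=1$ keeps $\alpha_j=1$ near $\Sigma_{f_j}$.

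Setting, in analogy with~\eqref{eq:3},
\[f_j^t(x,w,v)=g'_j(x,v)+w+(\tfrac14+t)\,\alpha_j(x,v)\,(D(w)-w),\]
I get a function linear at infinity of the form $w+g'_j+\epsilon_j^t$. A direct computation then yields $f_i^t\oplus_b q_{ij}=f_j^t$ on $M_{ij}\times\R\times\R^{n_j}$: the non-$\epsilon$ pieces match by construction of $g'_j$, while the $\epsilon$-parts match via the multiplicative formula for $\alpha_j$ together with the definition of $\oplus_b$ in~\eqref{eq:modified_action}. Since $|D(w)-w|$ is uniformly bounded and $|\alpha_j|\leq 1$, a single constant $b$ bounds every $\epsilon_j^t$ once $t$ is small. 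Thus $(b,M_i,f_i^t,q_{ij})$ is a twisted generating function linear at infinity with the same underlying $\QQ$-twisted map $(n_i,q_{ij})$, so its twisting is still classified by $h$. Applying Lemma~\ref{compact-generation} locally above each $M_i$, and using that the cover is finite and $M$ compact, a uniform small $t>0$ works for every $i$, so the pieces assemble into a twisted generating function linear at infinity for the $s(t)$-double of $L$.

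The main difficulty is engineering the extensions $g'_j$ and cut-offs $\alpha_j$ in a single compatible system: the constraints $g'_j=g'_i\oplus q_{ij}$ and the multiplicative constraint on $\alpha_j$ must all be imposed at once, and what makes this work is precisely the cocycle condition on $(q_{ij})$ together with the multiplicative property of $\chi_q$ from Lemma~\ref{lem:cutoffq}. Once these compatibilities are in place, the modified doubling formula is essentially forced by the definition of $\oplus_b$, and the generation statement reduces to the fibrewise estimate already carried out in the proof of Lemma~\ref{compact-generation}.
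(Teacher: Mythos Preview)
Your proposal is correct and follows essentially the same route as the paper: well-order the cover, inductively build compatible global extensions $g'_j$ satisfying $g'_j=g'_i\oplus q_{ij}$, inductively build compatible cut-offs $\alpha_j$ satisfying the multiplicative relation $\alpha_j=\alpha_i\cdot\chi_{q_{ij}}(b^{-1}\cdot)$, and then apply the doubling formula together with Lemma~\ref{compact-generation} over each $\overline{M_i}$. The only points the paper makes slightly more explicit are a preliminary negligible shrinking (so that all data extend to germs on closures and the inductive extensions are well posed) and the requirement that $\alpha_j$ vanish on the spurious part $\Sigma'_j\setminus\Sigma_j$ of the singular set of $g'_j$; both are implicit in your description.
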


\begin{proof}
  By Lemma~\ref{lem:well-order} we may start with a twisted generating function $(M_i,n_i,g_i,q_{ij})_{i\in I}$
  which is well-ordered, namely $I = \{1,\dots,k\}$ with the natural order and $n_i \leq n_{i+1}$. We set $n= n_k = \max_{i} n_i$. We write a point
  of $M \times \R^n$ as $(x,v_1, v_{12}, v_{23},\ldots, v_{k-1,k})$ and use the notation
$v_i = (v_1, v_{12}, \ldots , v_{i-1,i})$, $v_{ij} = (v_{i,i+1},\ldots, v_{j-1,j})$.
After slightly shrinking the cover $(M_i)$, we can assume that $g_i$ is defined on a neighborhood
of the compact set
\[\overline{\Sigma_i}=\Sigma_{g_i}\cap \overline{M_i}\times \R^{n_i},\]
in $M\times \R^{n_i}$ and $q_{ij}$ is defined on a neighborhood of $\overline{M_{ij}}$ in $M$,
so that $g_i\oplus q_{ij}=g_j$ on a neighborhood of $\overline{\Sigma_j}\cap (\overline{M_{ij}}\times \R^{n_j})$
in $M\times \R^{n_j}$.

The first step is to construct smooth functions $g'_i$ defined near $\overline{M_i}\times \R^{n_i} \to \R$
which coincide with $g_i$ near $\overline{\Sigma_i}$ and satisfy the relation $g'_i \oplus q_{ij}=g'_j$
near $\overline{M_{ij}}\times \R^{n_j}$ for all $i<j$. We proceed by induction on $i$.
We pick $g'_1$ to coincide with $g_1$ near $\overline{\Sigma_1}$. Since $g_2$ agrees with $g_1\oplus q_{12}$ near $\overline{\Sigma_2}\cap \overline{M_{12}}\times \R^{n_2}$, we can find $g'_2$ defined on a neighborhood of $\overline{M_2}\times \R^{n_2}$ which coincides with
$g'_1 \oplus q_{12}$ on a neighborhood of $\overline{M_{12}}\times \R^{n_2}$ and with $g_2$ on a neighborhood of $\overline{\Sigma}_2$.
We proceed by induction until we have defined all $g'_i$. Note that $(g'_i,q_{ij})$ possibly has
a larger singular set $\Sigma'_i$ but $\Sigma_i$ is isolated in $\Sigma'_i$.

Let us pick a constant $b\geq 1$ such that $b\geq \max_{w\in \R} |D(w)-w|$. Using $\Op(\_)$ to denote an arbitrary open neighbourhood, we now similarly inductively pick compactly supported smooth functions $\alpha_i\colon \Op(\overline{M}_i) \times \R^{n_i} \to [0,1]$
which equal $1$ near $\Sigma_i$ and $0$ on $\Sigma'_i\setminus \Sigma_i$ and such that for $x$ near $\overline{M}_{ij}$
we have
\begin{align*}
  \alpha_{i}(x,v_i) \chi_{q_{ij}}(b(x)^{-1} v_{ij}) = \alpha_j(x,v_j).
\end{align*}
This is possible since $\Sigma'_j$ coincides with $\Sigma'_i\times \{0\}$ above $\Op(\overline{M}_{ij})$.

Now we define $f_i(x,w,v_i)=w+g'_i(x,v_i)$ and $f_i^t$ as in Equation~\eqref{eq:3}, namely
\[f_i^t(x,w,v_i)=w+g'_i(x,v_i)+\alpha_i(x,v_i)(\tfrac{1}{4}+t)(D(w)-w).\]

Lemma~\ref{compact-generation} above (applied with $M=\Op(\overline{M}_i)$ and $K=\overline{M_i}$)
now ensures that $f_i^t$ generates the $s(t)$-double
of $\varphi\times z$ above $\overline{M}_i$ for small $t>0$ (the same for each $i$).

Finally we check the relation $f_i^t \oplus_b q_{ij} = f_j^t$ on $\overline{M_{ij}}$:

\begin{align*}
f_i^t\oplus_b q_{ij}&=w+g'_i+q_{ij}+\chi_{q_{ij}}(b^{-1}\cdot)\alpha_i (\tfrac14+t)(D(w)-w)\\
&=w+g'_j+\alpha_j(\tfrac14+t)(D(w)-w).
\end{align*}

Remark that all of the above works with any value of $b\geq 1$. But in Definition~\ref{dfn:tgenerating function linear at infinity}
we have required that $b$ is a bound for $f_i$, which is the case with
our choice of $b$ (and $t<\tfrac34$).
\end{proof}

\subsection{Difference function}

\begin{dfn}\label{dfn:diff}
Given a function $f\colon M\times \R^n \to \R$, the \emph{difference function}
$\delta f \colon M \times \R^n\times \R^n\to \R$ is defined by
\[\delta f(x,v_1,v'_1,\dots,v_n,v'_n)=f(x,v_1,\dots,v_n)-f(x,v'_1,\dots,v'_n).\]
\end{dfn}
Note that we alternate the variables as it will be more convenient later. 
This is a classical tool in the theory of generating functions. 
We will use it to derive the homological condition in Proposition~\ref{prop:tuberecognition}
in the case of Lagrangian embeddings. Observe that $\delta f$ is not strictly speaking a generating function
since it does not satisfy the transversality condition of Definition~\ref{dfn:gf} in general, hence we will
treat difference functions merely as functions.

If $f$ generates a Legendrian embedding $L\to J^1 M$, then the global critical points of $\delta f$
correpond to Reeb chords of $L$ and a Morse-Bott copy of $L$ in the level set $\{\delta f=0\}$.
In particular, when $L$ is a lift of an \emph{embedded} Lagrangian submanifold, then the global
critical points of $\delta f$ consists merely of this copy of $L$.

A first issue is that $\delta f$ is not necessarily linear at infinity when $f$ is. In particular in the twisted case,
the $\oplus_b$ operation cannot be used directly with $\delta f_i$. We are led to introduce the following
definition.

\begin{dfn}\label{dfn:deltalinear}
A function $F:M\times \R^2 \times (\R^2)^n \to \R$ is called \emph{$\delta$-linear at infinity} if
it is of the form:
\begin{align*}
& \quad F(x,w,w',v_1,v'_1,\dots,v_n,v_n')\\
& =w-w'+G(x,v_1,v_1',\dots,v_n,v_n')+\epsilon(x,w,v)-\epsilon'(x,w',v')
\end{align*}
where $\supp(\epsilon) \to M$ and $\supp(\epsilon') \to M$ are proper.
\end{dfn}

Observe that the functions $G, \epsilon$ and $\epsilon'$ are determined by $F$ since the functions
\[F(x,w,0,v_1,0,\dots,v_n,0) \quad \text{ and } \quad F(x,0,w',0,v'_1,\dots,0,v'_n)\]
are linear at infinity and thus determine $\epsilon$ and $\epsilon'$.

We now slightly modify the operation $\oplus_b$ to adapt it to functions $\delta$-linear at infinity.

\begin{dfn}\label{dfn:plusbdelta}
Given a smooth function $b:M\to [1,\infty)$, a smooth map $Q\colon M\to \QQ_{2m}$ and a function $F:M\times \R^2 \times (\R^2)^n \to \R$
which is $\delta$-linear at infinity, we define $F\oplus^\delta_b Q \colon M\times \R^2 \times (\R^2)^n \times (\R^2)^m \to \R$ by the formula:
\begin{align*}
& \quad (F\oplus^\delta_b Q)(x,w,w',v_1,v'_1,\dots,v_n,v'_n,u_1,u'_1,\dots,u_n,u'_n)\\
& =w-w'+G(x,v_1,v'_1,\dots,v_n,v_n')+Q(x,u_1,u'_1,\dots,u_n,u'_n)\\
&+\chi_{2m}(b(x)^{-1}(u_1,0,\dots,u_m,0))\epsilon(x,w,v) \\
&+\chi_{2m}(b(x)^{-1}(0,u'_1,\dots,0,u'_m))\epsilon'(x,w',v')
\end{align*}
where $G,\epsilon$ and $\epsilon'$ are as in Definition~\ref{dfn:deltalinear}.
We say that $b$ is a \emph{bound} for $F$ is $|\epsilon|\leq b$ and $|\epsilon'| \leq b$.
\end{dfn}

\begin{dfn}\label{dfn:deltaq}
For $q\in \QQ_n$, we define $\delta q \in \QQ_{2n}$ by the formula
\[q(u_1,u'_1,\dots,u_n,u'_n)=q(u_1,\dots,u_n)-q(u'_1,\dots,u'_n).\]
\end{dfn}

We note the following important lemma which says that, even though $\QQ$ is not a group,
$-q$ serves as a homotopy inverse of $q$.

\begin{lem}\label{lem:deltaq}
The map $\delta \colon \QQ \to \QQ$ is a monoid map and it is homotopic to the map $q\mapsto h^{\dim q}$ (where $h=2xy \in \QQ_2$)
through monoid maps.
\end{lem}
\begin{proof}
A direct check shows that $\delta(q_1\oplus q_2)=\delta q_1 \oplus \delta q_2$ for all $q_1,q_2\in \QQ$.

For $t\in[0,\pi/2]$ and $q\in \QQ$ we set 
\[(\delta q)^t = \cos(t)\delta q + \sin(t) h^{\dim q}.\]
The matrix of $(\delta q)^t$ has block form
\[\begin{pmatrix} \cos(t)q & \sin(t) \\ \sin(t) & -\cos(t)q \end{pmatrix}\]
in an appropriate basis, so $\det((\delta q)^t)=\det(-\cos^2(t) q^2 - \sin^2(t))\neq 0$ since $q^2$ is positive definite.
Moreover $(\delta q)^t$ has eigenvalues $\pm 1$, hence $\delta q^t \in \QQ$ for all $t\in[0,\pi/2]$.
Finally, we check that
\[(\delta(q_1\oplus q_2))^t=(\delta q_1 \oplus \delta q_2)^t=(\delta q_1)^t\oplus (\delta q_2)^t.\]
\end{proof}

The proof of the next result is similar to that of Lemma \ref{lem:bound}, we omit it.
\begin{lem}\label{lem:plusbdeltaproperties}
The  operation $\oplus^\delta_b$ satisfies the following properties:
\begin{itemize}
\item $(F \oplus^\delta_b Q_1) \oplus^\delta_b Q_2= F \oplus^\delta_b(Q_1\oplus Q_2)$,
\item if $f$ is linear at infinity, then $\delta f$ is $\delta$-linear at infinity,
\item $\delta f \oplus^\delta_b \delta q=\delta(f\oplus_b q)$,
\item if $b$ is a bound for $F$, then $F\oplus^\delta_b Q$ has the same singular set
as $F\oplus Q$ and coincides with $F\oplus Q$ near this set.
\end{itemize} \qed
\end{lem}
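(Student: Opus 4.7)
The plan is to verify each of the four items by direct computation, closely following the template of Lemma~\ref{lem:bound}. The main point is that the formulas in Definition~\ref{dfn:plusbdelta} are engineered so that all the identities reduce to the properties of the cut-off family $\chi_q$ recorded in Lemma~\ref{lem:cutoffq}, once one accounts for the alternating-variable convention in $\delta f$ and the fact that $\delta q$ is really $q\oplus(-q)$ up to a coordinate permutation.

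For the first item I would expand both sides using Definition~\ref{dfn:plusbdelta} and use the multiplicativity identity $\chi_{q_1\oplus q_2}(u_1,u_2)=\chi_{q_1}(u_1)\chi_{q_2}(u_2)$ to match the cut-off factors multiplying $\epsilon$ and $\epsilon'$; the $w-w'$, $G$, and $Q_1\oplus Q_2$ parts agree on the nose. For the second item, if $f(x,w,v)=w+g(x,v)+\epsilon(x,w,v)$ with $\supp(\epsilon)$ proper over $M$, then a direct computation gives
\[\delta f(x,w,w',v,v')=(w-w')+\bigl(g(x,v)-g(x,v')\bigr)+\epsilon(x,w,v)-\epsilon(x,w',v'),\]
which is exactly the shape required by Definition~\ref{dfn:deltalinear}, with $G(x,v,v')=g(x,v)-g(x,v')$ and $\epsilon'=\epsilon$.

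For the third item, let $\sigma$ be the coordinate permutation sending $(u_1,u_1',\dots,u_m,u_m')$ to $(u_1,\dots,u_m,u_1',\dots,u_m')$; then $\delta q=(q\oplus(-q))\circ\sigma$, and the invariance properties $\chi_{q\circ\sigma}=\chi_q\circ\sigma$ and $\chi_{q_1\oplus q_2}(u_1,u_2)=\chi_{q_1}(u_1)\chi_{q_2}(u_2)$, together with $\chi_q=1$ near $0$, give
\[\chi_{\delta q}(b^{-1}(u_1,0,\dots,u_m,0))=\chi_q(b^{-1}u)\cdot\chi_{-q}(0)=\chi_q(b^{-1}u),\]
and likewise for the primed tuple. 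Expanding $\delta(f\oplus_b q)$ using the second item applied to $f\oplus_b q$, and expanding $\delta f\oplus^\delta_b\delta q$ using Definition~\ref{dfn:plusbdelta}, one sees that the two expressions match term by term. For the fourth item, at a critical point of $F\oplus^\delta_b Q$ in the $u$ direction one has $\nabla Q(u,u')+b^{-1}\bigl(\epsilon\,\nabla_u\chi_Q(b^{-1}(u,0,\dots))+\epsilon'\,\nabla_u\chi_Q(b^{-1}(0,u',\dots))\bigr)=0$; the bound $|\epsilon|,|\epsilon'|\leq b$ combined with the strict inequality $\|\nabla\chi_Q\|<\|\nabla Q\|$ away from $0$ (from Lemma~\ref{lem:cutoffq}) forces the critical points to lie in $\{u=u'=0\}$, a neighbourhood of which the cut-offs are identically $1$ so that $F\oplus^\delta_b Q$ agrees with $F\oplus Q$ there, exactly as in the proof of Lemma~\ref{lem:bound}.

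The only step that demands genuine care is item three: the bookkeeping between the alternating convention $(v_1,v_1',\dots,v_n,v_n')$ used throughout the $\delta$-formalism and the split convention $(u,u')$ used in the definition of $\oplus$. The rest is verbatim the argument already given for Lemma~\ref{lem:bound}.
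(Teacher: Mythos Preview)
Your proposal is correct and follows exactly the approach the paper indicates: the paper gives no proof of this lemma at all, stating only that ``the proof of the next result is the same as that of Lemma~\ref{lem:bound}'' and closing with a \qed. You have carried out precisely this program, and your identification of item~(3) as the only place requiring genuine bookkeeping (the permutation $\sigma$ relating the alternating convention to the split convention, combined with $\chi_{q\circ\sigma}=\chi_q\circ\sigma$ and $\chi_{-q}(0)=1$) is on the mark.

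One small remark on item~(4): for a general $Q\in\QQ_{2m}$ that couples the unprimed and primed blocks, the direct transcription of the Lemma~\ref{lem:bound} estimate is slightly more delicate than in the untwisted case, because $\nabla Q$ is evaluated at the full point $\tilde u$ while the two cut-off gradients are evaluated at the projections $P_1\tilde u$ and $P_2\tilde u$. The cleanest way to finish is to project the critical-point equation onto the unprimed and primed subspaces separately (the $\epsilon$-term lives entirely in the unprimed block and the $\epsilon'$-term in the primed block), and then argue as in Lemma~\ref{lem:bound} in each block. In the paper's actual applications $Q$ is always of the form $\delta q$ or a direct sum thereof, where the blocks decouple and no extra care is needed; the paper does not spell this out either.
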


In the same way that the operation $\oplus_b $ made it possible to formulate Definition \ref{dfn:tgenerating function linear at infinity}, we now formulate the $\delta$-linear analogue:
\begin{dfn}\label{dfn:tfunction deltalinear at infinity}
A \emph{twisted function $\delta$-linear at infinity} over a manifold $M$
is given by data $(M_i, 2+2n_i, F_i, Q_{ij}, b)$, where
\begin{enumerate}
\item $(M_i)_{i\in I}$ is a directed open cover,
\item $F_i\colon M_i\times \R^2\times (\R^2)^{n_i}\to \R$ is a function $\delta$-linear at infinity over $M_i$,
\item $b:M \to [1,\infty)$ is smooth and is a bound for each function $F_i$,
\item $Q_{ij}$ is a map from $M_{ij}$ to $ \QQ_{2(n_j-n_i)}$ such that $F_i \oplus^\delta_b Q_{ij}=F_j$ over $M_{ij}$.
\end{enumerate}
\end{dfn}

For the statement of the next result, we observe that Lemmas~\ref{lem:deltaq} and \ref{lem:plusbdeltaproperties} imply that,
for any twisted generating function $(b,f_i,q_{ij})$ linear at infinity, the difference $(b,\delta f_i, \delta q_{ij})$
is a twisted function $\delta$-linear at infinity. Using Lemma~\ref{lem:trivialQbundle}, we can obtain an associated untwisted function:

\begin{lem}\label{lem:difference}
 Up to refining the cover with respect to which $(b,f_i,q_{ij})$ is defined, there exist quadratic forms $Q_i\colon M_i\to \QQ$ such that the functions
  $\delta f_i \oplus_b^\delta Q_i$ glue into a genuine function $\delta$-linear at infinity.
\end{lem}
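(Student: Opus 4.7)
My plan is to trivialize the $\QQ$-cocycle formed by the difference forms $\delta q_{ij}$ by invoking Lemma~\ref{lem:trivialQbundle}, and then use the resulting trivialization to glue the local difference functions.

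\textbf{Step 1: Reduction to the well-ordered case.} By Lemma~\ref{lem:well-order} I refine the cover so that $(M_i, n_i, f_i, q_{ij})$ is well-ordered with $I = \{1, \dots, k\}$ and $n_i \leq n_{i+1}$. Since $\delta$ is additive under direct sums, the data $(2 n_i, \delta q_{ij})$ defines a smooth well-ordered $\QQ$-twisted map from $M$ to $\N$, with $\delta q_{ij}\colon M_{ij} \to \QQ_{2(n_j - n_i)}$ of even dimension and with cocycle $\delta q_{ij} \oplus \delta q_{jk} = \delta q_{ik}$.

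\textbf{Step 2: A cocycle-preserving homotopy.} To apply Lemma~\ref{lem:trivialQbundle} I must produce, for each pair $i<j$, a smooth family $q_{ij}^t$ from $\delta q_{ij}$ to $h^{n_j-n_i}$ defining a $\QQ$-bundle at every $t$. For any $q \in \QQ_n$, I define
\[
q^t \;=\; (1-t)\,\delta q + t\, h^n \;\in\; \operatorname{Sym}_{2n}(\R).
\]
Additivity of $\delta$ and of $h^{-}$ under $\oplus$ yields $(q_1 \oplus q_2)^t = q_1^t \oplus q_2^t$, so applying this to the $q_{ij}$'s automatically produces a homotopy through $\QQ$-cocycles. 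The point to verify is non-degeneracy for all $t \in [0,1]$. In grouped coordinates $(v,v') \in \R^n \oplus \R^n$, $q^t$ has block matrix
\[
\begin{pmatrix} (1-t) A & (t/2)I \\ (t/2) I & -(1-t) A \end{pmatrix},
\]
where $A$ represents $q$. If $(x,y)$ is in its kernel with $t>0$, then $y=-\tfrac{2(1-t)}{t} A x$ and substituting into the other equation forces $\bigl(\tfrac{t}{2} I + \tfrac{2(1-t)^2}{t} A^2\bigr) x = 0$, whence $x = 0$ since $A^2$ is positive definite; the case $t=0$ is obvious. So $q^t$ is non-degenerate throughout the homotopy.

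\textbf{Step 3: Trivialization and gluing.} Lemma~\ref{lem:trivialQbundle} now yields, up to a further negligible shrinking of the cover, smooth maps $Q_i\colon M_i \to \QQ_{2(N-n_i)}$ (with $N = n_k$) such that $\delta q_{ij} \oplus Q_j = Q_i$ on $M_{ij}$ for all $i<j$. I set $F_i = \delta f_i \oplus_b^\delta Q_i$, which is $\delta$-linear at infinity by the very definition of $\oplus_b^\delta$. Using the associativity and the identity $\delta(f \oplus_b q) = \delta f \oplus_b^\delta \delta q$ from Lemma~\ref{lem:plusbdeltaproperties}, on each overlap $M_{ij}$ we have
\[
F_i \;=\; \delta f_i \oplus_b^\delta (\delta q_{ij} \oplus Q_j) \;=\; (\delta f_i \oplus_b^\delta \delta q_{ij}) \oplus_b^\delta Q_j \;=\; \delta f_j \oplus_b^\delta Q_j \;=\; F_j,
\]
so the $F_i$ glue into a single function $\delta$-linear at infinity on $M$.

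The main technical point is the non-degeneracy check in Step 2: once the simple linear interpolation $q^t$ is known to stay in $\QQ$ throughout the homotopy, the cocycle property is automatic from the additivity of $\delta$ and $h^{-}$, and everything else is a formal consequence of the properties of $\oplus_b^\delta$ recorded in Lemma~\ref{lem:plusbdeltaproperties} together with Lemma~\ref{lem:trivialQbundle}.
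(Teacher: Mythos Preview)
Your proof is correct and follows essentially the same strategy as the paper: construct an explicit homotopy from $\delta q_{ij}$ to $h^{n_j-n_i}$ through $\QQ$-cocycles, invoke Lemma~\ref{lem:trivialQbundle} to obtain the $Q_i$, and then glue using the properties of $\oplus_b^\delta$. The only difference is cosmetic: the paper uses the trigonometric interpolation $\cos(\tfrac{\pi}{2}t)\,\delta q_{ij}+\sin(\tfrac{\pi}{2}t)\,h^{n_{ij}}$ in place of your linear one, and checks non-degeneracy slightly differently (though your block-matrix kernel computation is cleaner and works verbatim for the paper's interpolation too).
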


\begin{proof}
We simply apply Lemma~\ref{lem:trivialQbundle} to the nullhomotopy of $\delta q_{ij}$ provided by Lemma~\ref{lem:deltaq}.
\end{proof}

Finally, recall that we can associate to each generating function $f$ which is linear at infinity the relative homology groups $ H_*(f\leq a_+,f \leq a_-)$ of the sublevel set associated to each interval $[a_-,a_+]$. The linearity condition implies that these groups do not depend on $-a_-$ and $a_+$ whenever they are sufficiently large, and we write $a_- = -\infty$ and $a_+ = +\infty$ in this case. The following result provides a computation of the homology groups associated to difference functions, and is formulated in terms of the derived category  $D^b(\Z)$ of bounded complexes of abelian groups; homology groups are viewed as objects of this category corresponding to complexes with trivial differential.
\begin{lem}\label{lem:homologydifference}
  Let $f:\R^n\times \R \to \R$ be a function linear at infinity.  If there
  exists a constant $c>0$ such that the critical values of $f$ are contained in the union of intervals
  $(-2c,-c)\cup (c,2c)$, then there is a chain complex $C$ to that
  \begin{equation}
    H_*(\delta f\leq -2c, \delta f\leq -\infty)\simeq  H_*(f\leq 0,f \leq -\infty) \overset{L}\otimes C.
  \end{equation}
\end{lem}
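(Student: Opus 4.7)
The plan is to write $\delta f$ on $\R^{n+1}\times\R^{n+1}$ as an external sum, namely $\delta f(w,v,w',v')=\phi(w,v)+\psi(w',v')$ with $\phi=f$ and $\psi=-f$. Both $\phi$ and $\psi$ are linear at infinity with critical values confined to $(-2c,-c)\cup(c,2c)$, so the critical values of $\delta f$, being sums of a critical value of $\phi$ and one of $\psi$, lie in
\[
  (-4c,-2c)\cup(-c,c)\cup(2c,4c).
\]
In particular there are no critical values of $\delta f$ in $[-2c,-c]$ nor outside $[-4c,4c]$, so $(\delta f\leq -2c,\delta f\leq -\infty)=(\delta f\leq -2c,\delta f\leq -4c)$, and the critical points contributing to the relative Morse theory of this pair are exactly the pairs $(p,q)$ with $\phi(p)\in(-2c,-c)$ and $\psi(q)\in(-2c,-c)$.

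Next I would invoke a Künneth-type isomorphism for the external sum $\phi\boxplus\psi$. After a compactly supported perturbation making $\phi$ Morse--Smale (which affects neither the critical values nor the linear-at-infinity structure, since all critical points lie in a compact set), one has the usual product structure on gradient trajectories of $\phi\boxplus\psi$, so the Morse complex satisfies
\[
  C^M_*(\phi\boxplus\psi)\;\cong\;C^M_*(\phi)\otimes C^M_*(\psi),
\]
with the filtration by critical value of the sum corresponding to the total-degree filtration. The subcomplex generated by generators of critical value $\leq -2c$ is therefore the tensor product of the ``slices'' $C^M_*(\phi)^{\leq -c}/C^M_*(\phi)^{\leq -2c}$ and $C^M_*(\psi)^{\leq -c}/C^M_*(\psi)^{\leq -2c}$. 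Applying the algebraic Künneth formula over $\Z$ yields, in $D^b(\Z)$,
\[
  H_*(\delta f\leq -2c,\delta f\leq -\infty)\;\simeq\;H_*(\phi\leq -c,\phi\leq -2c)\overset{L}\otimes H_*(\psi\leq -c,\psi\leq -2c).
\]

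To conclude, I would identify the factors. Since $f$ has no critical values in $(-c,c)$ nor below $-2c$, the standard deformation of sublevel sets through intervals of non-critical values gives $H_*(\phi\leq -c,\phi\leq -2c)\cong H_*(f\leq 0,f\leq -\infty)$. Setting $C$ to be any chain complex (for instance the relative singular chain complex, or Morse complex) representing $H_*(\psi\leq -c,\psi\leq -2c)=H_*(f\geq c,f\geq 2c)$ then yields the required isomorphism.

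The main obstacle is the rigorous justification of the Künneth step in the non-compact linear-at-infinity setting: one must ensure that the Morse-theoretic product decomposition of gradient trajectories still applies even though the ambient spaces are not compact. This is handled by the compactness of the critical sets of $\phi$ and $\psi$ (a consequence of linearity at infinity), which lets one choose the perturbation to be compactly supported and makes the moduli spaces of trajectories between critical points automatically compact, reducing the situation to the standard product formula.
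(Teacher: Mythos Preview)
Your argument is correct and follows essentially the same route as the paper's proof: both perturb $f$ to be Morse, use the product pseudo-gradient $X\oplus(-X)$ on $\delta f=\phi\boxplus\psi$, and identify the sub-$(-2c)$ Morse complex of $\delta f$ as the tensor product $C_-(f)\otimes C_+(f)^*$, so that $C=C_+(f)^*$. Your phrasing via the K\"unneth formula and the ``slices'' $C^M_*(\phi)^{\le -c}$, $C^M_*(\psi)^{\le -c}$ is exactly the paper's tensor decomposition in slightly different language; the only cosmetic difference is that the paper records the triangular form of the differential on $C_-\oplus C_+$ explicitly to make clear that $C_-$ is a subcomplex.
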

\begin{proof}
 This can be seen using Morse theory. We assume $f$ is Morse and pick  a gradient vector field $X$ for $f$. The linearity condition at infinity means we can use a pseudo-gradient which equals $\del_w$ outside a compact set to define the Morse complex of $f$, which can be used to compute the relative homologies of the sub-level sets.

  The Morse complex $C(f,X)$ admits a decomposition
  $C_-\oplus C_+$ whith differential
  $d=\left(\begin{smallmatrix} d_+ & 0 \\ d_{\pm} &
      d_- \end{smallmatrix}\right)$, where $C_+$ (resp. $C_-$) is generated by
  critical points of positive (resp. negative) critical value.  Here $(C_-,d_-)$
  computes $H_*(f \leq 0,f \leq -\infty)$. Now, due to the bounds on the critical
  values of $f$, the Morse subcomplex $C_{\leq -2c}(\delta f, X\oplus (-X))$
  generated by critical points of $\delta f$ with critical values less than or equal
  to $-2c$ is isomorphic to
  $(C_-(f)\otimes C_+(f)^*,d_-\otimes \id + \id \otimes (d_+)^*)$.  Since the
  Morse complex is free over $\Z$, this gives the result with
  $C=C_+(f)^*$, where we have used the fact that complexes
  of abelian groups are isomorphic in $D^b(\Z)$ to their cohomology to identify $C_-(f)$ with its homology.
\end{proof}

\subsection{Proof of Theorem~\ref{thmintro:tgftube}}

Recall that the function $D: \R \to  \R$ from the previous section has two critical points with critical values $\pm 1$ and Morse index 0 and 1. Since $|D(w)-w| \leq 4$, for any quadratic form $Q\in \QQ_n$ the function $f=D\oplus_b Q$, whenever $b \geq 4$, also has only two critical points with Morse index $\ind(Q)$ and $\ind(Q)+1$ where $\ind(Q)$ is the negative index of $Q$. In particular $0$ is a regular value of $f$ and the sublevel set $\{f\leq 0\}$ is obtained from $\{f\leq c\}$, for any $c<-2$, by attaching a trivial
handle of index $\ind(Q)$ (here by a trivial handle we mean precisely such a handle that is in cancellation position
with a handle of index one more, in particular the attaching sphere of the handle
bounds a disk and has trivial framing). We refer to such a sublevel set $\{D\oplus_b Q \leq 0\}$ as a \emph{rigid tube}.
In the next definition, we consider more general functions which are linear at infinity, with possibly
more critical points, but with a sub $0$-level set which is isotopic to a rigid tube.
\begin{dfn}
A function $f \colon \R \times \R^n \to \R$ is \emph{of tube type} if there is a function
$F\colon[0,1]\times \R \times \R^n\to \R$ which is linear at infinity, such that $0$ is a regular value of the function $(w,v)\mapsto F(t,w,v)$ for all $t\in [0,1]$, and the boundary values for $t = 0$ and $t=1$ are respectively given by $f$ and  $D \oplus_4 Q$ for some quadratic form $Q$.
\end{dfn}
Note that we are not really discarding the function on the set $\{f>0\}$. Indeed, to define the action of $\QQ$ on such functions we need this part.

We can then formulate the version of the notion of function of tube type associated to a manifold $M$:
\begin{dfn}\label{dfn:tgftube}
 A generating function $f\colon M\times \R \times \R^n \to \R$
 is of \emph{tube type over $M$} if it is linear at infinity and the function $f_x \colon \R \times \R^n \to \R$ is of tube type for each $x\in M$.
 
 We say that a generating function $f$ of tube type \emph{tube generates} a Legendrian $L \to J^1(M)$ if the restriction of $f$ to $\{f\leq 0\}$ generates $L$.
As in Definition~\ref{dfn:gf} we also say that $f$ tube generates the exact Lagrangian immersion $L\to J^1 M \to T^* M$.

 A \emph{twisted generating function of tube type} is a twisted generating function $(b,M_i,f_i,q_{ij})$ which is 
linear at infinity, such that each $f_i$ is a generating function of tube type
over $M_i$.
\end{dfn}

We start with a proposition allowing to recognize a generating function of tube type
from a homological condition. This relies on Smale's h-cobordism theorem. For the statement, we recall that $h=2xy$.

\begin{prop}\label{prop:tuberecognition}
  If $f:\R \times \R^n \to \R$ is a function which is linear at infinity such that $0$
  is a regular value and $H_*(f\leq 0,f\leq -\infty)\simeq \Z[-i]$ for some integer $i$,
then there exists an integer $N$ such that $h^N \oplus_b f$ is of tube type (for large enough $b$).
\end{prop}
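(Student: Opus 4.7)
The plan is to stabilize $f$ by $h^N$, use Smale's handle cancellation theorem to reduce to a function with exactly two critical points of consecutive indices, and then identify this simplified model with $D\oplus_b Q$ via a compactly supported diffeomorphism. First, perturb $f$ in a compact region so that it is Morse; its critical points are confined to a compact set by linearity at infinity. Set $\tilde f = h^N \oplus_b f$ with $b \geq 3$ and $N$ to be chosen. By Lemma~\ref{lem:bound}, the critical points of $\tilde f$ coincide with those of $h^N \oplus f$, namely the critical points of $f$ with indices shifted up by $N$ (the Morse index of the origin for $h^N$). By the Thom isomorphism, relative homology of sublevel sets shifts by $N$, so the hypothesis gives $H_*(\tilde f \leq 0, \tilde f \leq -\infty) \simeq \Z[-(i+N)]$. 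The long exact sequence of the triple $(\tilde f \leq -\infty) \subset (\tilde f \leq 0) \subset \R^{2N+1+n}$, combined with contractibility of the total space and of the half-space $\{\tilde f \leq -\infty\}$, yields $H_*(\R^{2N+1+n}, \tilde f \leq 0) \simeq \Z[-(i+N+1)]$.

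For $N \geq 2$, the regular level sets $\{h^N = c\}$ with $c \neq 0$ are diffeomorphic to $\R^N \times S^{N-1}$ and hence simply connected, and this simple-connectedness persists for the regular level sets of $\tilde f$ outside critical values. For $N$ large enough that $2N+1+n \geq 6$, we are in the simply-connected high-dimensional regime where Whitney's trick and Smale's cancellation apply. The Morse chain complex of $\tilde f$ restricted to critical points below $0$ has homology $\Z[-(i+N)]$; iteratively cancelling pairs of critical points of consecutive indices with algebraic intersection $\pm 1$, we reduce to a single critical point of index $i+N$ below $0$. An identical argument above $0$, using the homology just computed, reduces to a single critical point of index $i+N+1$ there. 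These cancellations are realized by birth-death families supported in compact subsets strictly on one side of $\{\tilde f = 0\}$, hence give an isotopy in the space of functions linear at infinity keeping $0$ as a regular value throughout.

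At this stage $\tilde f$ is a Morse function linear at infinity on $\R^{2N+1+n}$ with exactly two critical points of consecutive indices $k=i+N$ and $k+1$, at critical values $c_- < 0 < c_+$. The remaining step is to identify such a function with the model $D \oplus_b Q$ for a non-degenerate quadratic form $Q$ of index $k$ on a Euclidean space of appropriate dimension. The cobordism $\{c_- - \epsilon \leq \tilde f \leq c_+ + \epsilon\}$ consists of a single cancelling handle pair, and in high dimension the descending disk of the $(k+1)$-handle and the ascending disk of the $k$-handle meet transversally in a single point, determined by the algebraic cancellation. Extending beyond the cobordism (using that both $\{\tilde f \leq c_- - \epsilon\}$ and $\{\tilde f \geq c_+ + \epsilon\}$ are half-spaces, as is the complement of the corresponding sets for $D \oplus_b Q$), one constructs a diffeomorphism $\Psi$ of $\R^{2N+1+n}$, equal to the identity outside a compact set, with $\tilde f = (D \oplus_b Q) \circ \Psi$. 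Since $\pi_0 \operatorname{Diff}_c(\R^{2N+1+n})$ vanishes in the dimensions we may arrange, $\Psi$ is isotopic through compactly supported diffeomorphisms $\Psi_t$ to the identity, and the family $F_t = (D \oplus_b Q) \circ \Psi_t$ is then linear at infinity with $0$ a regular value at every stage, providing the required isotopy to $D \oplus_b Q$.

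The main obstacle will be the second step: carrying out Smale's cancellation procedure entirely within the class of functions linear at infinity with $0$ a regular value. Since the critical points to be cancelled lie strictly on one side of $\{\tilde f = 0\}$, one can in principle support each birth-death modification in a compact neighborhood disjoint from this level set, but globally assembling the resulting one-parameter family so that each intermediate function remains linear at infinity (with a uniform bound $b$) requires care with the cut-off regions and the geometry of the Whitney disks.
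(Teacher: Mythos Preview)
Your final step has a genuine gap. The claim that $\pi_0 \operatorname{Diff}_c(\R^{2N+1+n})$ vanishes ``in the dimensions we may arrange'' is false: for $m\geq 5$ this group is isomorphic to $\Theta_{m+1}$, the group of homotopy $(m{+}1)$-spheres, which is nonzero for most $m$. You only get to choose $N$; the dimension $2N+1+n$ then runs through integers of a fixed parity, and for instance among odd-dimensional spheres only $S^1,S^3,S^5,S^{61}$ have a unique smooth structure, so for odd $n$ there is in general no admissible $N$ at all. Even before that, the assertion that a compactly supported $\Psi$ with $\tilde f=(D\oplus_b Q)\circ\Psi$ \emph{exists} is not justified: two Morse functions with the same critical data need not be conjugate by a compactly supported diffeomorphism without further argument.

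The paper's proof avoids all of this. It never tries to conjugate $\tilde f$ to the model globally. Instead it cancels critical points \emph{only below level $0$}, leaving a single critical point there, and then matches the descending disk of that point with the descending disk of the model $D\oplus_b Q$ (an isotopy of embedded $i$-disks in codimension $\geq 2$, which is where the index condition $2i\leq n-2$ is used). Gradient flows in a common neighbourhood of that disk then push the two hypersurfaces $\{f'=0\}$ and $\{D\oplus_b Q=0\}$ to coincide. Once the zero level sets agree, the two functions have the same sign everywhere and the straight-line homotopy $t f'+(1-t)(D\oplus_b Q)$ keeps $0$ regular throughout while staying linear at infinity. No cancellation above level $0$ is needed, and no appeal to $\pi_0\operatorname{Diff}_c$ is made. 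Your route through a global conjugacy and an isotopy of diffeomorphisms both does more work and runs into an obstruction that is genuinely there.
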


\begin{proof}
  For large $b>1$, we can choose a perturbation of $f$ (and hence of $q\oplus_b f$)
  making it Morse so that there is a gradient-like vector field with the
  property that the flow lines between all critical points lie inside the set
  where $q\oplus_b f=q\oplus f$. Hence the level set homology is suspended by the
  index of $q$. If we prove that $q\oplus_b f$ is of tube type, then $\delta q \oplus_b f$
  will also be of tube type and $\delta q$ is homotopic to $h^N$ for some $N$ according to Lemma~\ref{lem:deltaq}.
  Hence we can stabilize $f$ arbitrarily and we can assume:
  \begin{itemize}
  \item $2 \leq \ind(q) \leq n-2$ for all critical point $q$ of $f$ in $\{f\leq 0\}$,
  \item $4\leq 2i\leq n-3$.
  \end{itemize}
  As $D\oplus_4 Q$ also satisfy these when $Q$ has Morse index $i$ we finish the proof by arguing that there is a deformation through functions linear at infinity between any such two functions, say $f_0$ and $f_1$, keeping $0$ regular.

  If $f_j(x,w,v)=w + g_j(x,v) + \epsilon_j(x,w,v)$ as in Definition~\ref{dfn:linear} then we may first deform both functions by
  \begin{align*}
    f_{j,t}(x,w,v)=w + (1-t\varphi(x,v))g_j(x,v) + \epsilon_j(x,w,v)
  \end{align*}
  where $1-\varphi$ has compact support and the support of each $\epsilon_j,j=0,1$ is contained in the set where $\varphi=0$. As all critical points must have $\epsilon_j\neq 0$ it follows that these stay constant for both functions. Using such deformations we may thus assume that $f_j^{-1}(0)$ agrees with $\{w=0\}$ outside a compact set. 

  It is then enough to find a compactly supported isotopy taking $f^{-1}_0(0)$ to $f^{-1}_1(0)$. Indeed after this isotopy
the convex interpolation between $f_0$ and $f_1$ will remain transverse to $0$. This isotopy will be constructed
via the h-cobordism theorem.

We pick $c>0$ large enough so that $\{w \leq -c\} \subset \{f_j \leq 0\}$ for each $j=1,2$. Note that the relative homotopy type $\{f_j\leq 0\}/\{w\leq -c\}$ is a based sphere of dimension $i$. Indeed, as there are no 1-handles in our Morse functions the homology supported in one degree implies this by Hurewicz's theorem.

We then pick two smooth disc embeddings $(D^i,\partial D^i) \to (\{f_j < 0\},\{w\leq -c\})$ which are also relative homotopy equivalences. As the space of disc embeddings $(D^i,\partial D^i) \to (\R^{n+1},\{w\leq -c\})$ is connected by the assumption $2i\leq n-3$ we can apply an ambient isotopy to one of the functions and assume that the two maps from $D^i$ into $\R^{n+1}$ are in fact equal.

Let $T$ be a smoothing of a small tubular neighborhood of this disc union the set $\{w\leq -c\}$. We may assume that these sits inside both $\{f_j < 0\}$. Consider the complements $W_j = \{f_j\leq 0\} \setminus \Int T$. The relative homology of each $(\{f_j\leq 0\},T)$ now vanishes implying that $W_j$ is a homology-cobordism. As $W_j$ is obtained from the simply connected $\{f_j\leq 0\}$ by carving out a tubular neighborhood around a $D^i$ with $2i<n-3$ it is simply connected. Its two boundaries are also simply connected using the above bounds on Morse indices, so $W_j$ is in fact an $h$-cobordism. It follows that both $W_j$ are compactly supported $h$-cobordisms from $\partial T$ to $\{f_j=0\}$, and the claim follows from the $h$-cobordism theorem.
\end{proof}

Our goal for the rest of this subsection is to prove the following theorem, which is a precise
version of Theorem~\ref{thmintro:tgftube}.

\begin{thm}\label{thm:existencetgftube}
  Let $M$ and $L$ be closed manifolds, and $\varphi \times z \colon L \to J^1 M$ a Legendrian lift of a Lagrangian embedding
  $\varphi:L\to T^* M$. If $s_1>\max_L|z|$, then the $s_1$-double admits a twisted generating function of tube type.
  In particular this twisted generating function tube generates $\varphi \times (z-s_1)$.
\end{thm}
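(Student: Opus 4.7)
My plan is to split the argument into three stages: (a) produce a twisted generating function linear at infinity for the $s_1$-double, (b) compute the fiberwise sublevel-set homology of its pieces, and (c) invoke Proposition~\ref{prop:tuberecognition} to upgrade to tube type.

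For stage (a), since $L$ is a closed exact Lagrangian, the projection $\pi\colon L\to M$ is a homotopy equivalence -- the essential Floer/sheaf input of the paper -- so the stable Gauss map $g_\varphi$ factors through $\pi$ up to homotopy. Theorem~\ref{thm:twistedgirouxlatour} then furnishes a twisted generating function for $\varphi\times z$, and Theorem~\ref{thm:twisted-double} upgrades it to a twisted generating function linear at infinity for the $s$-double for some (small) $s>0$. To pass from the $s$-double to the $s_1$-double I would use a compactly supported contact isotopy of $J^1 M$ whose time-one map sends the former to the latter -- obtained by cutting off a Reeb translation to a neighbourhood of $\varphi(L)\times\R$ and arranging opposite signs on the two copies -- and then apply Theorem~\ref{thm:twistedchekanov} to obtain a twisted generating function $(b,M_i,f_i,q_{ij})$ linear at infinity for the $s_1$-double. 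The critical values of each $f_{i,x}$ are exactly the $z$-coordinates of the Legendrian points of the $s_1$-double above $x$ and therefore lie in $[-s_1-\max_L|z|,-s_1+\max_L|z|]\cup[s_1-\max_L|z|,s_1+\max_L|z|]$; since $s_1>\max_L|z|$, $0$ is a regular value, and by iterating the above isotopy we may further assume that these intervals fit in a union $(-2c,-c)\cup(c,2c)$ for some $c>0$, so that Lemma~\ref{lem:homologydifference} becomes applicable.

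For stages (b) and (c), Lemma~\ref{lem:difference} glues the difference functions $\delta f_i\oplus_b^\delta Q_i$ into a single function $F$ on $M\times\R^2\times(\R^2)^N$ which is $\delta$-linear at infinity. Applying Lemma~\ref{lem:homologydifference} fiberwise then gives
\[H_*(F_x\leq -2c,F_x\leq -\infty)\simeq H_*(f_{i,x}\leq 0,f_{i,x}\leq -\infty)\overset{L}\otimes C_x,\]
where $C_x$ is the dual Morse complex of critical points of $f_{i,x}$ above level $c$. The left-hand side is the Morse(-Bott) homology of Reeb chords from the upper to the lower copy of $L$ in the $s_1$-double above $x$; since the algebraic intersection count of $L$ with each cotangent fibre is $1$ (a consequence of the homotopy-equivalence property of $\pi$), this homology is rank $1$ concentrated in a single degree, and by the symmetric $\pm s_1$ structure of the $s_1$-double the complex $C_x$ is also of rank $1$ in a single degree, forcing $H_*(f_{i,x}\leq 0,f_{i,x}\leq -\infty)\simeq\Z[-k_i(x)]$. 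Proposition~\ref{prop:tuberecognition} applied to each $f_{i,x}$ then yields tube type after a uniform global stabilization by one large enough $h^N$, producing a twisted generating function of tube type for the $s_1$-double.

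The main obstacle I foresee is the fiberwise homological computation at points $x$ where $\pi^{-1}(x)$ is non-generic (e.g.\ not a single transverse point): the Morse-Bott structure of $F_x$ can be intricate there, and extracting the rank-$1$ conclusion requires careful bookkeeping of orientations and Morse cancellations that follow from the homotopy-equivalence property of $\pi$. A secondary technical issue is making the stabilization parameter $N$ in Proposition~\ref{prop:tuberecognition} uniform across $M$, which I would handle by a single global $h^N$ stabilization of the whole twisted generating function.
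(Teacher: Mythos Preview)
Your stage (a) follows the paper's argument essentially verbatim, and the overall architecture of stages (b)--(c) is correct. The real gap is in the fiberwise homological claim in stage (b).

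You assert that $H_*(F_x\leq -2c,F_x\leq -\infty)$ is rank~$1$ concentrated in a single degree because ``the algebraic intersection count of $L$ with each cotangent fibre is $1$''. This does not follow. Over a fixed $x\in M$ the Lagrangian $L$ may meet $T_x^*M$ in many points geometrically; if there are $k$ such points then $f_{i,x}$ has $k$ fiber-critical points in each of the intervals $(-2c,-c)$ and $(c,2c)$, and $F_x$ has $k^2$ critical points below level $-2c$. The intersection number only constrains the \emph{Euler characteristic} of these complexes, not the rank or the concentration in a single degree. Your symmetry remark (that $C_x$ is dual to the negative complex) is correct but does not by itself force either factor to be rank~$1$. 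You flag this yourself as ``the main obstacle'', and indeed it is: the pointwise argument cannot be closed without further input.

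The paper circumvents this by a global-to-local argument that you are missing. One first computes the \emph{global} relative homology $H_*(F\leq -4s_1,F\leq -\infty)$ over all of $M$: since $\varphi$ is an embedding there are no Reeb chords of $L$, so the only critical points of $F$ in this range form a Morse--Bott copy of $L$ at level $-6s_1$ (this is why one pushes out to $s=3s_1$), and after passing to a cover to orient the negative eigenbundle one gets $H_*(L)\simeq H_*(M)$. Then one regards $x\mapsto C_*(F_x\leq -4s_1,F_x\leq -\infty;\Z/p)$ as a derived local system on $M$ with global sections $H_*(M;\Z/p)$; after a further finite cover to make the cohomology sheaves constant, Lemma~\ref{lem:rankone} forces this local system to be the constant sheaf $\Z/p$, hence fiberwise rank~$1$. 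Finally Lemma~\ref{lem:tensor} (applied via Lemma~\ref{lem:homologydifference}) upgrades this to $H_*(f_{i,x}\leq 0,f_{i,x}\leq -\infty)\simeq\Z[d_i]$ over~$\Z$. These two lemmas are the missing ingredients; once they are in place, your stage (c) and the uniform stabilization remark go through as you describe.
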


In order to prove Theorem \ref{thm:existencetgftube}, we need two preliminary results of homological nature. Given a ring $A$, we write $D^b(A_M) $ for the derived category of (complexes of) sheaves of $A$-modules on $M$. The first result is a homological characterisation of those complexes which are equivalent to the constant rank-$1$ sheaf:
\begin{lem}\label{lem:rankone}
 If $M$ is a compact connected orientable manifold of
  dimension $n$ and $G\in D^b(A_M)$ such that
  $\R\Gamma(M; G) \simeq \R\Gamma(M;A_M)$ in the derived category of $A$-modules and $H^i G$ are constant.  Then
  $G \simeq A_M$ in $D^b(A_M)$.
\end{lem}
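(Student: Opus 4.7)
The plan is to analyse the hypercohomology spectral sequence
\[E_2^{p,q} = H^p(M; H^q G) \Rightarrow H^{p+q}\R\Gamma(M;G).\]
Since each $H^q G$ is constant on the connected manifold $M$, write $V_q$ for the $A$-module with $H^q G \simeq V_{q,M}$, and let $q_{\min}$ and $q_{\max}$ denote the smallest and largest indices for which $V_q$ is nonzero (both exist because $G$ is bounded and not acyclic, the latter since $\R\Gamma(M;G)\simeq \R\Gamma(M;A_M)\neq 0$). My goal is to show $q_{\min}=q_{\max}=0$.

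The key step is that the two ``corners'' $E_2^{0,q_{\min}}$ and $E_2^{n,q_{\max}}$ are permanent cycles at their $E_2$-values. For $(0,q_{\min})$: every outgoing differential $d_r$ lands at $(r,q_{\min}-r+1)$, where $V_{q_{\min}-r+1}=0$ by minimality, so the target already vanishes on the $E_2$-page; every incoming differential would have source at horizontal degree $-r<0$, hence zero. For $(n,q_{\max})$: outgoing differentials land at horizontal degree $n+r>n$, which is zero for dimension reasons, while incoming differentials come from $(n-r,q_{\max}+r-1)$, which vanishes by maximality of $q_{\max}$. Using $H^0(M;V_{q_{\min},M})=V_{q_{\min}}$ (since $M$ is connected) and $H^n(M;V_{q_{\max},M})=V_{q_{\max}}$ (via the universal coefficient theorem, with $H^n(M;A)=A$ and $H^{n+1}(M;A)=0$ because $M$ is compact, connected, orientable), both corners are nonzero, so $H^{q_{\min}}\R\Gamma(M;G)\neq 0$ and $H^{n+q_{\max}}\R\Gamma(M;G)\neq 0$.

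Since the hypothesis gives $\R\Gamma(M;G)\simeq \R\Gamma(M;A_M)$, whose cohomology lives in degrees $[0,n]$, we deduce $q_{\min}\geq 0$ and $q_{\max}\leq 0$, hence $V_q=0$ for $q\neq 0$. The complex $G$ then has cohomology concentrated in a single degree, so $G\simeq V_{0,M}$ in $D^b(A_M)$ via the standard truncation zigzag $G \leftarrow \tau^{\leq 0}G \to H^0G[0]$. Finally, comparing $H^0$ of the hypothesised isomorphism using $H^0(M;V_{0,M})=V_0$ and $H^0(M;A_M)=A$ gives $V_0\simeq A$, so $G\simeq A_M$. The main point requiring care is the corner-survival argument; everything else reduces to standard bookkeeping once that is in place.
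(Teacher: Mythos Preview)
Your proof is correct and takes essentially the same approach as the paper: both isolate the extremal degrees $q_{\min}$, $q_{\max}$ of $H^*G$ and show they force nonzero hypercohomology in total degrees $q_{\min}$ and $n+q_{\max}$, whence $q_{\min}=q_{\max}=0$. The only difference is packaging---you use the hypercohomology spectral sequence and its surviving corner terms, while the paper argues via truncation triangles and their long exact sequences---but these are two phrasings of the same argument.
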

\begin{proof}
Recall from \cite[Remark 1.7.6]{KS_1994} that we have truncation endofunctors $\tau_{\geq a}$ and $\tau_{\leq b}$ of $D^b(A_M)$
together with morphisms $F\to \tau_{\geq a} F$ and $\tau_{\leq b} F \to F$ inducing
isomorphisms in cohomological degrees $\geq a$ and $\leq b$ respectively. They come with a distinguished
triangle $\tau_{\leq a} F \to F \to \tau_{\geq a+1} F\overset{+1}{\to}$.

Let $a$ be the minimal integer such that $H^a G\neq 0$. We have a distinguished triangle
$G'\to G \to G''$ with $G''=\tau_{\geq a+1} G$ and $G'=H^a G[-a]$.
Since $H^a G$ is constant and non zero, we have $H^a(M;G')=H^0(M;H^a G)\neq 0$.
Since $G''$ is concentrated in degree greater than or equal to $a+1$, we have $H^i(M,G'')=0$
for $i< a+1$. The long exact sequence
\begin{equation}\dots \to H^i(M;G')\to H^i(M;G) \to H^i(M;G'') \to \dots \end{equation}
now implies that $H^a(M;G)\neq 0$. But by assumption $H^a(M;G)\simeq H^a(M;A)$,
hence $a\geq 0$.

Similarly, take $b$ the maximal integer such that $H^b G \neq 0$, and consider
the triangle $G'\to G \to G''$ with $G'=\tau_{\leq b-1} G$ and $G''=H^b G[-b]$.

Then $H^{n+b}(M;G'')=H^n(M;H^b G)$ is non-zero since $H^b G$ is constant and $M$ is orientable.
Moreover $H^i(M;G')=0$ if $i>b-1+n$ since $G'$ is concentrated in degrees $\leq b-1$.
The long exact sequence gives $H^{b+n}(M;G)\neq 0$, and hence $b\leq 0$.
Therefore $H^i G=0$ for $i\neq 0$ and $H^0 G$ is constant by hypothesis, hence $G\simeq E_M$
for some $A$-module $E$ (see \cite[Proposition 1.7.2]{KS_1994}). Hence $A=H^0(M;A_M)=H^0(M;G)=H^0(M;E_M)\simeq E$ since $M$ is connected,
and $G\simeq A_M$.
\end{proof}

The next result is a characterisation of free rank-$1$ abelian groups in terms of their properties with respect to the tensor product of complexes of modules:
\begin{lem}\label{lem:tensor}
  If $A$ and $B$ are bounded complexes of abelian groups with finite rank cohomology, which satisfy
  $(A\overset{L}{\otimes} B)\overset{L}{\otimes} (\Z/p\Z) \simeq \Z/p\Z$ for
  all primes $p$, then $A\simeq \Z[d]$ for some $d\in \Z$.
\end{lem}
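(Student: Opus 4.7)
The plan is to reduce to $\Z/p\Z$-coefficients, where Künneth becomes trivial, and then to read off the structure of $A$ from numerical data via the universal coefficient theorem. The only points needing some care are the base-change identity for the derived tensor product and a parity argument at the end that extracts the rank and the torsion information simultaneously.

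First, by derived base change over $\Z$,
\[
(A \overset{L}\otimes B) \overset{L}\otimes \Z/p\Z \simeq A_p \overset{L}\otimes_{\Z/p\Z} B_p,
\]
where $A_p := A \overset{L}\otimes \Z/p\Z$ and similarly for $B_p$. Since $\Z/p\Z$ is a field, the derived tensor product on the right is just the ordinary tensor product, and the Künneth formula gives $H_*(A_p) \otimes_{\Z/p\Z} H_*(B_p) \simeq \Z/p\Z$ concentrated in a single degree as graded vector spaces. Therefore each of $H_*(A_p)$ and $H_*(B_p)$ is one-dimensional over $\Z/p\Z$ and concentrated in a single degree, in complementary positions.

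Next, since $A$ is bounded with finitely generated cohomology, write $H_i(A) \cong \Z^{n_i} \oplus T_i$ with $T_i$ a finite abelian group. Resolving $\Z/p\Z$ by $0 \to \Z \xrightarrow{p} \Z \to \Z/p\Z \to 0$ and taking homology produces a split short exact sequence of $\Z/p\Z$-modules
\[
0 \to H_i(A)/p H_i(A) \to H_i(A_p) \to H_{i-1}(A)[p] \to 0,
\]
whence $\dim_{\Z/p\Z} H_i(A_p) = n_i + \dim T_i/pT_i + \dim T_{i-1}[p]$. Since $\dim T/pT = \dim T[p]$ for any finite abelian group $T$, summing over $i$ yields
\[
\dim_{\Z/p\Z} H_*(A_p) = \sum_i n_i + 2 \sum_i \dim T_i[p] = 1
\]
for every prime $p$. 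Parity forces $\sum_i n_i = 1$ (so a unique $n_d$ equals $1$ and the others vanish) and $\dim T_i[p] = 0$ for all $i$ and all primes $p$; the latter forces every finite group $T_i$ to vanish. Thus $H_d(A) = \Z$ and $H_i(A) = 0$ for $i \neq d$, and truncation identifies $A$ with its cohomology, giving $A \simeq \Z[d]$ in $D^b(\Z)$ (up to the sign convention for the shift).
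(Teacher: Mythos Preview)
Your proof is correct and takes a genuinely different route from the paper's. The paper first splits $A \simeq A_f \oplus A_t$ and $B \simeq B_f \oplus B_t$ into free and torsion parts (using formality of $D^b(\Z)$), then argues in two stages: a prime $p$ coprime to all the torsion pins down $A_f \simeq \Z[d]$ and $B_f \simeq \Z[-d]$, and then a prime $q$ dividing the order of a putative torsion summand of $B$ (or $A$) produces cohomology in two degrees, a contradiction. Your argument is more uniform: after the base-change/K\"unneth step you extract only the numerical fact $\dim_{\Z/p} H_*(A_p)=1$ for every $p$, and the universal-coefficient identity $\sum_i n_i + 2\sum_i \dim T_i[p]=1$ does all the work at once via parity. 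This avoids the case split on primes and the explicit free/torsion decomposition of the complexes; the paper's argument, on the other hand, is more structural and makes visible which primes are responsible for which constraints. Both are short; yours is arguably the cleaner bookkeeping.
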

\begin{proof}
  We can decompose $A$ and $B$ as a sum of torsion free and torsion parts, say
  $A \simeq A_f \oplus A_t$, $B \simeq B_f \oplus B_t$ (we first decompose
  $A = \bigoplus_i H^iA[-i]$ and then decompose the $H^iA$'s).  We first choose
  a prime $p$ which annihilates the torsion $A_t$ and $B_t$.  Then
  $(A\overset{L}{\otimes} B)\overset{L}{\otimes} (\Z/p\Z) \simeq A_f \otimes B_f
  \otimes (\Z/p\Z)$  and we deduce $A_f \simeq \Z[d]$, $B_f \simeq \Z[-d]$, for
  some $d\in \Z$.  Now we assume that $B_t\not=0$ and choose a non trivial
  direct summand $(\Z/n\Z)[i]$. Let $q$ be a prime dividing $n$. Then
  $(\Z/n\Z) \overset{L}{\otimes} (\Z/q\Z) \simeq (\Z/q\Z \xrightarrow{n = 0} \Z/q\Z)$ has
  non zero cohomology in two degrees. But
  $(A_f \overset{L}{\otimes}\Z/n\Z[i] ) \overset{L}{\otimes} (\Z/q\Z)$ is a
  direct summand of $(A\overset{L}{\otimes} B)\overset{L}{\otimes} (\Z/p\Z)$ and
  we have a contradiction.  Hence $B_t=0$ and, in the same way, $A_t=0$.
\end{proof}

\begin{proof}[Proof of Theorem \ref{thm:existencetgftube}]
According to \cite{Kragh2013} and \cite{Abouzaid2012a} (or \cite{guillermou_quantization_2012})
we know that $L\to M$ is a homotopy equivalence. In particular the stable Gauss map
of $L$ factors up to homotopy through a map $h:M\to\Lambda_0(\C^\infty)$.
Theorem~\ref{thm:twistedgirouxlatour} provides a twisted generating function $(g_i,q_{ij})$ for $L$
twisted by $h$.

Theorem~\ref{thm:twisted-double} transforms $(g_i,q_{ij})$ into a twisted generating
function linear at infinity $(b,f_i,q_{ij})$ for the $s_0$-double of $\varphi\times z$
for some small $s_0>0$. Let $s_1>\max_L|z|$.
We pick a compactly supported contact isotopy
$(\theta_s)_{s\in [s_0,3s_1]}$ of $J^1 M$ such that $\theta_{s_0}=\id$ and
\[\theta_s \circ (\varphi\times (z\pm s_0))=\varphi\times (z \pm s).\]
We apply Theorem~\ref{thm:twistedchekanov}
to $(b,f_i,q_{ij})$ and $\theta_s$
to find a twisted generating function $(b',f^s_i,q'_{ij})$ which is linear
at infinity for $\varphi\times (z\pm s)$. As soon as $s\geq s_1$, $0$ is a regular value
of $f_i^s(x,\cdot)$ for all $x\in M_i$ and the topology of the sublevel set $\{f_i^s\leq 0\}$ is stable.
So it is enough to prove the result for $s=3s_1$ (for $s\geq 3s_1$ we can apply Lemma \ref{lem:homologydifference}).

According to Proposition~\ref{prop:tuberecognition}, up to stabilizing all $f^{3s_1}_i$ (by left acting with a quadratic form),
it is enough to show that $H_*(f^{3s_1}_i(x)\leq 0,f^{3s_1}_i(x)\leq-\infty)\simeq \Z[d_i]$
for any given $x\in M_i$ and some $d_i$.  First consider the difference function
$(\delta f^{3s_1}_i, \delta q'_{ij})$ and convert it to an actual function
$F\colon M\times \R^N\to \R$ using Lemma \ref{lem:difference}.

The only global critical points of $F$ with critical values less than or equal to $-4s_1$
consist of a copy of $L$ in $\{F=-6s_1\}$ in Morse-Bott
situation. After possibly passing to a double cover (of $M$ and hence of $L$,
recall $L\to M$ is a homotopy equivalence), we may assume the negative
eigenbundle of this critical submanifold is orientable and conclude
$H_{*+d}(F\leq -4s_1,F\leq -\infty)\simeq H_*(L)\simeq H_*(M)$ for some $d\in \Z$.

Now for any prime $p$, $C_{*+d}(F(x,-)\leq -4s_1,F(x,-)\leq -\infty; \Z/p\Z)$
defines a (derived) local system $G$ on $M$ (that is, an object of $D^b((\Z/p\Z)_M)$
with locally constant cohomology sheaves). More precisely,
we can describe $G$ as the dual of the pushforward of the constant sheaf on some sublevel set of $F$, namely
\[G=R\mathcal{H}om(R\pi_*(( \Z/p\Z)_{\{-A<F(x,-)\leq -4s_1\}}),(\Z/p\Z)_M[\dim M+d])\]
for large enough $A>0$. After passing to a finite cover we can assume that the cohomology sheaves
$H^i G$ are constant. By composition of derived functors, $R\Gamma(G)$ computes the relative
homology $H_{*+d}(F\leq -4s_1,F\leq -\infty)\simeq H_*(M)$, Lemma \ref{lem:rankone} then implies $G\simeq (\Z/p\Z)_M$.
In particular, for any point $x\in M$ and any prime $p$,  $H_{*+d}(F(x,-)\leq -4s_1,F(x,-)\leq -\infty;\Z/p\Z)\simeq \Z/p\Z$.
Finally, Lemma \ref{lem:homologydifference} (applied with $c=2s_1$) and Lemma~\ref{lem:tensor}
implies that for all $i$ and $x\in M_i$, $H_*(f^{3s_1}_i(x)\leq 0,f^{3s_1}_i(x)\leq-\infty)\simeq \Z[d_i]$
for some $d_i\in \Z$, which concludes the proof.
\end{proof}

\section{The stable Gauss map of nearby Lagrangians}\label{sec:tubespaces}

As we shall recall below, in \cite{waldhausen_algebraic_1982} Waldhausen defined a stable tube space $\WW_\infty$ (in there denoted $\TT_\infty$) and the rigid tube map $BO \to \WW_\infty$,
which was later proved by Bökstedt to be a rational homotopy equivalence.
In this section, we relate generating functions of tube type to Waldhausen's rigid tube map
and prove Theorem~\ref{thmintro:gaussmap} and Theorem~\ref{thmintro:homotopysphere}
using Bökstedt's rational equivalence result.

\subsection{Tube spaces}

Let $\TT_n$ be the set of pairs $(f,b)$ where $f:\R\times \R^n\to \R$ is a function of tube type and $b\geq 1$ is a bound for $f$.
Recall the function $f$ has the form
\[f(w,v)=w+g(v)+\epsilon(w,v),\]
with $\epsilon$ compactly supported, so
\begin{equation}
    \TT_n \subset C^\infty(\R^n,\R) \times C^\infty_c(\R\times \R^n,\R)\times (0,+\infty).
\end{equation}
We endow $C^\infty(\R^n,\R)$ with the weak $C^\infty$-topology, $C^\infty_c(\R\times \R^n,\R)$ with
the strong $C^\infty$-topology. Equipping $\TT_n$ with the restriction of the product topology we find that, if $X$ is a compact space and $X\to \TT_n$
is a continuous map, then the corresponding family $(\epsilon_x)_{x\in X}$ has a common compact support $K$ in $\R \times \R^n$
while the family $(g_x)_{x\in X}$ is allowed to vary outside of any compact set. But since 
\[\frac{\del f_x}{\del w}=\frac{\del (w+g_x+\epsilon_x)}{\del w}=1+\frac{\del \epsilon_x}{\del w}\]
is equal to $1$ outside of $K$, the critical points of $f_x$ do not escape to infinity.
So this topology on $\TT_n$ is appropriate to describe tube spaces. We also equip the space $\TT=\sqcup_n \TT_n$
with the disjoint union topology.

The operation $\oplus_b$ from Definition \ref{dfn:plusb} restricts to a continuous right action on $\TT$: if $t=(b,f)$, then $ t \cdot q=(b,f\oplus_b q)$. The right action uses the decomposition $\R^{n+m} = \R^{n} \times \R^{m}$. Using instead the decomposition $\R^{m+n} = \R^m \times \R^n$, we obtain a left action. Inspecting Equation \eqref{eq:modified_action} implies:
\begin{lem}
  The left and right actions of $\QQ$ on $\TT$ commute. \qed
\end{lem}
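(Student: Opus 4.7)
The statement reduces to a direct calculation from the explicit formulas. Writing $t=(b,f)\in \TT_n$ with $f(w,v)=w+g(v)+\epsilon(w,v)$ as in Definition~\ref{dfn:linear}, and fixing $q_1\in \QQ_{m_1}$, $q_2\in \QQ_{m_2}$, the plan is to evaluate $q_1\oplus_b(f\oplus_b q_2)$ and $(q_1\oplus_b f)\oplus_b q_2$ in the coordinates $(w,u_1,v,u_2)\in \R\times \R^{m_1}\times \R^n\times \R^{m_2}$ and check that they agree.

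First I would unfold the right action using Definition~\ref{dfn:plusb}:
\[
(f\oplus_b q_2)(w,v,u_2)=w+g(v)+q_2(u_2)+\chi_{q_2}(b^{-1}u_2)\,\epsilon(w,v),
\]
so the new ``$g$-part'' is $g(v)+q_2(u_2)$ and the new ``$\epsilon$-part'' is $\chi_{q_2}(b^{-1}u_2)\,\epsilon(w,v)$. The left action, defined just after Definition~\ref{dfn:tgenerating function linear at infinity} by $(q\oplus_b f)(w,u,v)=(f\oplus_b q)(w,v,u)$, is the right action composed with the swap of the two fiber blocks, so stabilizing on the left by $q_1$ multiplies the $\epsilon$-part by $\chi_{q_1}(b^{-1}u_1)$ instead. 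Applying the remaining stabilization in each order then yields the common expression
\[
w+g(v)+q_1(u_1)+q_2(u_2)+\chi_{q_1}(b^{-1}u_1)\,\chi_{q_2}(b^{-1}u_2)\,\epsilon(w,v),
\]
because the two cutoff factors depend on disjoint fiber coordinates and simply multiply. The bound $b$ is unchanged under either action by Lemma~\ref{lem:bound}, and the resulting element lies in $\TT$ since both actions are already known to preserve being of tube type.

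I do not anticipate any serious obstacle: the result is a bookkeeping verification that the $b$-modified stabilization $\oplus_b$ of Definition~\ref{dfn:plusb} is compatible with the obvious bimodule structure, the only non-formal ingredient being that the cutoff $\chi_q(b^{-1}\cdot)$ inserted by $\oplus_b$ depends only on the \emph{new} fiber variable, so successive left and right stabilizations commute on the nose.
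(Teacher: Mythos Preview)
Your proposal is correct and is precisely the verification the paper has in mind: the paper's own ``proof'' is the sentence ``Inspecting Equation~\eqref{eq:modified_action} implies'' followed by the \qed, and what you have written is exactly that inspection carried out in full. There is nothing to add.
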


Using the left action by $h$, we define the limit space:
\[\TT_\infty=\colim(\TT \overset{h\cdot}{\longrightarrow} \TT \overset{h\cdot}{\longrightarrow} \cdots),\]
namely $\TT_\infty=\TT\times \N/\sim$ where $(t,i)\sim (h\oplus_b t, i+1)$.

We fix the basepoint for $\TT_\infty$ given by the pair $((D,4),0)$ at the first place of the colimit, where $D$ is the function from Figure \ref{fig:function-D-graph} (for which $b=4$ is a bound).
Note that $\TT_\infty$ inherits a right action of $\QQ$ since the two actions commute, and there is a $\QQ$-equivariant map $\TT_\infty \to \Z$ which takes $(t,i)$ with $t\in \TT_n$ to $n-2i$, where $\QQ$ acts on $\Z$ via the homomorphism $\dim \colon \QQ \to \N$. The base point lies in $\TT_0 \times \{0\}$, hence projects to $0$ under this map.

Finally, for any $d\in \Z$, we define the shift homeomorphism $\TT_\infty \to \TT_\infty$ to be given by $(t,i)[2d]=(h^d \oplus_b  t, i)$
for $d\geq 0$ and $(t,i)[2d]=(t,i-d)$ for $d\leq 0$. This corresponds to a translation of $2d$ in $\Z$
under the above map $\TT_\infty\to \Z$.

\begin{lem}\label{lem:actionequivalence}
For all $q\in \QQ$ the right actions $\TT_\infty \overset{\cdot q}{\longrightarrow} \TT_\infty$ and $\QQ_\infty \overset{\cdot q}{\longrightarrow} \QQ_\infty$
are homotopy equivalences.
\end{lem}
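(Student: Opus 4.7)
The plan is to show that $R_q$ is a homotopy equivalence by reducing to the case where $q = h^k$, which in turn will be reduced to the left action of $h^k$ that defines the colimit structure.

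First I would compare the left and right actions of $h$ on $\TT_\infty$. The functions $h \oplus_b t$ and $t \oplus_b h$ in $\TT_{n+2}$ are related by the linear change of coordinates on $\R^{n+2}$ that swaps the $\R^2$-block (the variables of $h$) with the $\R^n$-block (the fiber variables of $t$). This block-swap has determinant $(-1)^{2n}=1$, so it lies in $GL^+(\R^{n+2})$ and can be connected to the identity through a smooth path $\sigma_s$ in $GL^+(\R^{n+2})$. Since linear automorphisms of the fiber act continuously on $\TT$ and preserve tube type, the formula $(t,s) \mapsto \phi_{\sigma_s}(h \cdot t)$ gives a homotopy in $\TT_\infty$ between $L_h$ and $R_h$. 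Because $L_h$ is tautologically a homotopy equivalence, being the shift defining the colimit $\TT_\infty = \colim(\TT \xrightarrow{h\cdot}\TT\xrightarrow{h\cdot}\cdots)$ with inverse $(t,i)\mapsto (t,i+1)$, we conclude that $R_h$, and hence $R_{h^k}$ for every $k$, is a homotopy equivalence of $\TT_\infty$.

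Next I would handle a general $q \in \QQ_n$ by exploiting the compositions
\begin{equation*}
R_{-q} \circ R_q = R_{q \oplus (-q)}, \qquad R_q \circ R_{-q} = R_{(-q) \oplus q}.
\end{equation*}
Both $q \oplus (-q)$ and $(-q) \oplus q$ are elements of $\QQ_{2n}$ of signature $0$, and the space of signature-$0$ non-degenerate quadratic forms on $\R^{2n}$ is path-connected (the $GL_{2n}$-orbit of any such form is $\QQ_{n,n}$, which is connected). Hence each of these forms is connected to $h^n$ by a smooth path in $\QQ_{2n}$; applying such a path to the continuous right action yields homotopies $R_{-q}\circ R_q \simeq R_{h^n} \simeq R_q\circ R_{-q}$ in $\TT_\infty$. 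For an explicit path one can use a variant of the rotation $\cos(\tfrac{\pi}{2}s)(q\oplus(-q)) + \sin(\tfrac{\pi}{2}s)h^n$ appearing in Lemma~\ref{lem:difference}, and check non-degeneracy directly.

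Finally, a formal argument finishes the proof: if $R_q \circ R_{-q}$ and $R_{-q}\circ R_q$ are both homotopy equivalences, then composing with their homotopy inverses produces both a left and a right homotopy inverse for $R_q$, so $R_q$ itself is a homotopy equivalence. I expect the only mildly delicate step to be the identification of $L_h$ with $R_h$ up to homotopy; the main point is to observe that all relevant coordinate permutations lie in the identity component of $GL$ after the $h$-stabilization, which is exactly the kind of stabilization implicit in passing to $\TT_\infty$.
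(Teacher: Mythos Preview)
Your proposal is correct and follows essentially the same approach as the paper: both arguments use the homotopy from $q\oplus(-q)$ to $h^{\dim q}$ (the rotation formula from Lemma~\ref{lem:difference}), observe that the permutation relating the left and right actions of $h^k$ is even and hence lies in the identity component of $GL$, and conclude via the colimit structure that the left action of $h$ is a homotopy equivalence on $\TT_\infty$. The paper phrases this by directly exhibiting $(t,i)\mapsto (t\oplus_b(-q),i)[2\dim q]$ as a homotopy inverse, while you instead argue that $R_q\circ R_{-q}$ and $R_{-q}\circ R_q$ are homotopy equivalences and then invoke the formal fact that this forces $R_q$ to be one; these are reorganizations of the same idea.
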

\begin{proof}
We only prove the case of $\TT_\infty$ as the other case is similar and easier. We claim that a homotopy inverse is given by the map $(t,i)\mapsto (t\oplus_b (-q), i)[2\dim q]$. First, from Lemma~\ref{lem:deltaq}
we know that from $q\oplus (-q)$ is homotopic to $h^{\dim(q)}$ (after a permutation of variables). Since
$h$ is preserved by an odd permutation ($(x,y)\mapsto(y,x)$), we obtain that $q\oplus (-q)$ is homotopic to $h^{\dim q}$.
Moreover the left and right actions by $h^{\dim(q)}$ are homotopic on $\TT$ since the permutation rearranging the factors
is even. Finally, the map induced on homotopy groups of the limit space $\TT_\infty$ can be represented on $\TT$,
and we obtain the weak (and hence strong since all spaces
are $CW$-complexes) homotopy equivalence result.
\end{proof}

Let $\TT_\infty'$ denote the fiber over $0$ of the projection map $\TT_\infty \to \Z$. Due to Lemma~\ref{lem:actionequivalence}
and \cite[Lemma D.1]{hatcher_short_2014}, we obtain a fibration sequence:

\[\TT'_\infty \to |B(\TT_\infty,\QQ)|\to |B(\Z,\QQ)|.\]

Simarly we denote by $\QQ_\infty'$ the fiber of $\QQ_\infty\to \Z$ over $0$. By right acting
on the base point $(D,4)$ we have a map $\QQ\to \TT$ and a map induced on the limits $\QQ_\infty\to \TT_\infty$.
This yields a map of fibration sequences:
\begin{equation}\label{finaldiagram}
\begin{tikzcd}
& & \star\ar[d,"\wr"] & \\
\Omega {|B(\Z,\QQ)|}\ar[r,"\sim"] \ar[d,equal] &\QQ_\infty' \ar[r]\ar[d] & {|B(\QQ_\infty,\QQ)|} \ar[r] \ar[d]& {|B(\Z,\QQ)|}\ar[d,equal]\\
\Omega {|B(\Z,\QQ)|}\ar[r] &\TT_\infty' \ar[r] & {|B(\TT_\infty,\QQ)|} \ar[r] & {|B(\Z,\QQ)|}
\end{tikzcd}
\end{equation}

The twisting datum of a twisted generating function is recovered using the map $\TT_\infty\to \Z$:

\begin{prop}\label{prop:liftgauss}
Let $(b,f_i,q_{ij})$ be a twisted generating function of tube type for a Legendrian
immersion $L\to J^1 M$ twisted by a map $h\colon M \to |B(\Z,\QQ)|$.
Then $h$ lifts through $|B(\TT_\infty,\QQ)|$.
\end{prop}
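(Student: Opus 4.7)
The plan is to package the tube type data as a $\QQ$-twisted map from $M$ with values in $\TT_\infty$, classify it via Proposition~\ref{prop:QF-bundles}, and verify that the resulting map to $|B(\TT_\infty,\QQ)|$ projects down to the given twisting $h$ under the map to $|B(\Z,\QQ)|$ induced by $\TT_\infty \to \Z$. The key observation is that this projection exactly extracts the dimension cocycle $(n_i)$ out of the much richer data of the family of tube functions.

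In detail, I would first define, for each $i \in I$, the map
\[
    \tau_i \colon M_i \to \TT_{n_i}, \qquad x \mapsto \bigl(f_i(x,\cdot,\cdot),\, b(x)\bigr).
\]
This is well defined because $f_i(x,\cdot,\cdot)$ is of tube type with bound $b(x)$ at every $x \in M_i$ by hypothesis, and it is continuous in the topology from Section~\ref{sec:tubespaces}: smoothness of $f_i$ gives weak $C^\infty$-continuity of the asymptotic part, while the properness of $\supp(\epsilon_i) \to M$ built into Definition~\ref{dfn:linear} produces a common compact support for the perturbations $\epsilon_i(x,\cdot,\cdot)$ over any compact subset of $M_i$, which is what the strong $C^\infty$-topology on the compactly supported factor demands. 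The cocycle identity $f_i \oplus_b q_{ij} = f_j$ from Definition~\ref{dfn:tgenerating function linear at infinity} translates verbatim into the $\QQ$-equivariance relation $\tau_i(x) \cdot q_{ij}(x) = \tau_j(x)$ on $M_{ij}$. After composing each $\tau_i$ with the inclusion $\TT \hookrightarrow \TT_\infty$ at the zeroth stage of the colimit, the triple $(M_i, \tau_i, q_{ij})$ is a $\QQ$-twisted map from $M$ to $\TT_\infty$, and Proposition~\ref{prop:QF-bundles} supplies a classifying map $\tilde h \colon M \to |B(\TT_\infty, \QQ)|$.

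It then remains to check that $\tilde h$ is a lift of $h$. The $\QQ$-equivariant projection $\TT_\infty \to \Z$, sending $(t,i)$ with $t \in \TT_n$ to $n-2i$, induces the vertical map $|B(\TT_\infty,\QQ)| \to |B(\Z,\QQ)|$ from diagram~\eqref{finaldiagram}; under the correspondence of Proposition~\ref{prop:QF-bundles}, this projection simply extracts from $(\tau_i, q_{ij})$ the underlying $\QQ$-twisted map $(n_i, q_{ij})$ from $M$ to $\Z$, which is by definition the cocycle classifying $h$. Hence the composition $M \xrightarrow{\tilde h} |B(\TT_\infty, \QQ)| \to |B(\Z, \QQ)|$ is homotopic to $h$, as required. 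The only genuine technical point is the continuity check for the $\tau_i$, which is where the properness condition of Definition~\ref{dfn:linear} enters in an essential way; beyond that, the argument is a formal unwinding of the definitions of twisted generating function, of the right $\QQ$-action on $\TT$, and of the bar construction with coefficients.
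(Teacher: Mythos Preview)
Your proposal is correct and follows essentially the same approach as the paper: both package the tube-type data as a $\QQ$-twisted map to $\TT$ (equivalently, a simplicial map $MV(M_\bullet) \to B(\TT,\QQ)$), include into $\TT_\infty$ at the zeroth stage of the colimit, and observe that the induced map to $|B(\Z,\QQ)|$ recovers the dimension cocycle $(n_i, q_{ij})$ classifying $h$. Your version is more explicit about the continuity of the maps $\tau_i$, which the paper leaves implicit.
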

\begin{proof}
A twisted generating function of tube type gives naturally a simplicial map
$MV(M_\bullet)\to B(\TT,\QQ)$ which we compose with $B(\TT,\QQ)\to B(\TT_\infty,\QQ)$
by taking the first place in the colimit. The composition with $\TT_\infty\to \Z$
gives a map $M\simeq|MV(M_\bullet)|\to |B(\Z,\QQ)|$ which is homotopic to $h$ by definition.
\end{proof}

In order to relate our constructions to those of Waldhausen and Bökstedt, we now introduce Waldhausen's stable tube space $\WW_\infty$, and its relationship with the space $\TT_\infty$ of tube-like functions. Waldhausen's definition of $\WW_\infty$ is essentially the following. First define $\WW_{n,m}$
as the space of hypersurfaces $F \subset \R\times \R^{n}$ which coincide with $\{w=0\}$ at infinity
and are isotopic to the attachment of a trivial single handle of index $m$ on top of $\{w=0\}$.
Then Waldhausen defines stabilization maps
\begin{align*}
  \underline{\sigma} : \WW_{n,m} \to \WW_{n+1,m} \quad \text{and} \quad  \overline{\sigma}: \WW_{n,m}\to \WW_{n+1,m+1}
\end{align*}
and the limit space
\[\WW_\infty=\varinjlim_{n,m}\WW_{n,m}\]
with respect to both stabilization maps.
To define $\underline{\sigma}$ Waldhausen considers the homotopy equivalent sub-space $\underline{\WW_n} \subset \WW_n$ on which $F$ is completely contained in the upper half space $[0,\infty)\times \R^n$. He then (essentially) defines
\begin{equation}\label{eq:sigmaunderline}
  \underline{\sigma}(F) = \partial \big( (\{\leq F\} \times [-1,1]) \cup ((-\infty,0] \times \R^{n+1}) \big)
\end{equation}
where $\{\leq F\}$ denotes the side of $F$ which contains points with negative values in the first coordinates. This is not smooth but Waldhausen argues that one can pick a contractible choice of transverse vector field to smoothen it (he describes how to smoothen in the appendix to \cite{waldhausen_algebraic_1982}). The choice of the interval $[-1,1]$ is arbitrary and can be changed freely. The definition of $\overline \sigma$ is analogous.
Finally, Waldhausen defines the rigid tube map
\begin{equation}
    BO \to \WW_\infty
\end{equation}
by mapping a vector space $V \subset \R^n$ in $\R \times \R^n$ to the hypersurface obtained by attaching a trivial handle along the unit sphere in $V$.

\begin{prop}\label{prop:rigidtube}
There are equivalences $\QQ_\infty'\simeq \Z\times BO$ and $\TT_\infty'\simeq \Z\times \WW_\infty$
under which the map $\QQ_\infty'\to \TT_\infty'$ is given by the product of the rigid tube map with the identity on the $\Z$-factor.
\end{prop}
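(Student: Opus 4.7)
The first equivalence $\QQ_\infty' \simeq \Z \times BO$ follows immediately from Remark~\ref{rem:ZxZxBO}: the invariants $\dim(q) - 2i$, $\sgn(q)$, and $E^-(q)$ assemble into a homotopy equivalence $\QQ_\infty \simeq \Z \times \Z \times BO$, so restricting to the fiber of the first coordinate over $0$ yields $\QQ_\infty' \simeq \Z \times BO$, with the remaining $\Z$-factor given by (half) the signature.

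For the second equivalence, I would construct a ``zero-level-set'' map $\Phi \colon \TT_n \to \bigsqcup_m \WW_{n, m}$. Writing a tube function as $f(w, v) = w + g(v) + \epsilon(w, v)$ with $\epsilon$ compactly supported, the hypersurface $\{f = 0\}$ coincides asymptotically with the graph $\{w = -g(v)\}$; conjugating by the diffeomorphism $(w, v) \mapsto (w + g(v), v)$ produces a hypersurface in $\R \times \R^n$ agreeing with $\{w = 0\}$ at infinity. Since $f$ is of tube type, it is isotopic (through tube functions for which $0$ remains regular) to $D \oplus_b Q$ for some non-degenerate quadratic form $Q$ on $\R^n$, and the resulting hypersurface lies in $\WW_{n, \ind(Q)}$. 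Moreover $\Phi$ intertwines the left $h$-stabilization on $\TT$ with the composition $\overline{\sigma} \circ \underline{\sigma}$ on $\WW$: since $h \oplus_b f \sim D \oplus (h \oplus Q)$ lives in $\TT_{n+2}$ with index $\ind(Q) + 1$. Consequently the components of $\TT_\infty$ are classified by the two stable integer invariants $n - 2i$ and $\ind(Q) - i$, and $\TT_\infty'$ decomposes as a disjoint union over $k = \ind(Q) - i \in \Z$.

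To identify each component of $\TT_\infty'$ with $\WW_\infty$, I would argue in two steps. First, the restriction of $\Phi$ to the component of $\TT_n$ with $\ind(Q) = m$ is a weak equivalence onto $\WW_{n, m}$: its homotopy fiber over a hypersurface $F$ is the space of tube-type defining functions for $F$ with prescribed behavior at infinity, which is convex (one varies such a defining function by positive rescaling fixing $F$) and hence contractible. Second, the sub-colimit along the diagonal $(n, m) = (2i, i + k)$ is cofinal in Waldhausen's double colimit $\WW_\infty = \colim_{n, m} \WW_{n, m}$, because in the stable range both $\overline{\sigma}$ and $\underline{\sigma}$ are weak equivalences. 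Together these give $\TT_\infty' \simeq \Z \times \WW_\infty$. The compatibility with the rigid tube map is then a direct verification: for $q \in \QQ_n$ with $n = 2i$, its image $D \oplus_b q \in \TT_{2i}$ has quadratic part $Q = q$, so the $\Z$-invariant is $\ind(q) - i = -\sgn(q)/2$ (matching the $\Z$-factor on $\QQ_\infty'$ up to an overall sign absorbed into the identification), while the resulting hypersurface is exactly the rigid tube attached to $\{w = 0\}$ along the unit sphere of $E^-(q) \subset \R^n$, which is by definition Waldhausen's rigid tube map applied to $E^-(q) \in BO$. The main obstacle is the cofinality argument for $\WW_\infty$, which requires the stable homotopy invariance of $\overline{\sigma}$ and $\underline{\sigma}$; this is essentially due to Waldhausen in \cite{waldhausen_algebraic_1982}.
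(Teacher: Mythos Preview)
Your approach is essentially the same as the paper's: take the zero level set to obtain a map $\TT_n \to \WW_n$, argue it is a homotopy equivalence by convexity of the space of defining functions, match the $h$-stabilization with $\overline{\sigma}\circ\underline{\sigma}$, and identify the rigid tube map. The paper normalises the level set at infinity by cutting off $g$ outside a compact set (after first restricting to the subspace $\underline{\TT_n}$ where $\{f=0\}\subset [0,\infty)\times\R^n$), whereas you conjugate by $(w,v)\mapsto (w+g(v),v)$; both work. One small correction: the cofinality of the diagonal $\{(2i,i+k)\}$ in $\N\times\N$ is purely order-theoretic and does not require any stable homotopy invariance of $\overline{\sigma}$ or $\underline{\sigma}$---so your ``main obstacle'' is in fact no obstacle at all.
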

\begin{proof}

Let $\underline{\TT_n} \subset \TT_n$ denote the homotopy equivalent subspace where the level-set $\{f=0\}$ is in $[0,\infty) \times \R^n$, and where the gradient of $f$ is transverse to $\{0\}\times \R^n$ (necessarily pointing to the positive side). We set $\WW_n=\sqcup_m \WW_{n,m}$ and define maps $w_n : \underline{\TT_n} \to \underline{\WW_n}$ by essentially taking the level-set $\{f=0\}$.  However this is not quite well-defined since such a hypersurface is of the form $\{w=g(v)\}$ at infinity (rather than $\{ w = 0\}$). But we can cut-off $g$ in the equation $w=g(v)$ outside a compact set without changing that the hypersurface is transverse to the gradient of $f$, and the choice of such cut-off is contractible. The map $w_n$ is then a homotopy equivalence, since the space of functions with a given regular level set is convex and so is the condition on the gradient near $\{0\}\times \R^n$.

To compare Waldhausen's stabilization with ours, note that the hypersurface $\underline\sigma\circ w_n(f,b)$ is transverse to the gradient of $f\oplus q$ with $q(v)=v^2$ independent on the choice of the interval. So we may replace $[-1,1]$ in \eqref{eq:sigmaunderline} by a small enough interval depending on $b$ to also have the gradient of $f\oplus_b q$ transverse to it. By then using the gradient flow of $f \oplus_b q$ we can produce a contractible choice of homotopy from $\underline{\sigma} \circ w_n(f,b)$ to $w_{n+1} (f\oplus_b q,b)$. Similarly, $\overline{\sigma}\circ w_n$ is homotopic to $w_{n+1} \circ( \cdot \oplus_b (-q))$ and it follows that $h\oplus_b \cdot$ is homotopic to $\overline{\sigma} \circ \underline{\sigma}$.

The $BO$ factor in $\QQ_\infty'$ corresponds to the negative eigenspace $V$ of a non-degenerate quadratic form $q$ on $\R^n$, and the rigid tube
associated to such a vector subspace of $\R^n$ can be chosen such that the gradient of $D \oplus_b q$ is transverse to it.
Hence Waldhausen's rigid tubes can be canonically deformed to our rigid tubes of the form $\{D\oplus_b q =0\}$.

The remaining $\Z$-factor in $\QQ_\infty'$ corresponds to the signature (and hence to the first Maslov class).
This factor is also present in $\TT_\infty'$ since the index and coindex of the handle are
determined homologically, namely if $D\oplus_b q$ is homotopic in $\TT$ to $D\oplus_b q'$
then $q$ and $q'$ have the same signature.
\end{proof}

\subsection{A corollary of Bökstedt's theorem}

In \cite{bokstedt_rational_1984} Bökstedt proved the following theorem.
\begin{thm}[Bökstedt]
Waldhausen's rigid tube map $BO\to \WW_\infty$ is a rational homotopy equivalence.
\end{thm}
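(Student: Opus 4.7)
The plan is to relate $\WW_\infty$ to an algebraic $K$-theoretic spectrum whose rational homotopy is accessible via classical computations, and then to identify the rigid tube map with a natural map on the $K$-theory side. The rational equivalence then follows from Borel's theorem.

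First, I would interpret a hypersurface $F \in \WW_{n,m}$ as the upper boundary of an h-cobordism on $\R^n$ obtained by attaching a single trivial $m$-handle. Stabilizing in $n$ and in the handle index, $\WW_\infty$ becomes (a component of) the stable h-cobordism space of a point, which by Waldhausen's stable parametrized h-cobordism theorem identifies a delooping of $\WW_\infty$ with a summand of $A(*)$, the algebraic $K$-theory of a point. The linearization map $A(*) \to K(\Z)$ is a rational equivalence (by classical trace arguments, or by the Dundas--Goodwillie--McCarthy theorem), so combining with Borel's computation of $\pi_*(K(\Z))\otimes \Q$ determines the rational homotopy of $\WW_\infty$, which then has the shape of $\pi_*(BO) \otimes \Q$.

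Second, the rigid tube map $BO \to \WW_\infty$ must be matched with a natural map on the $K$-theory side. The tube around the unit sphere of $V\subset \R^n$ records, up to contractible choice, the normal bundle of its attaching sphere, so under the identification with $A(*)$ it should correspond to a sphere-bundle construction applied to vector bundles. A candidate rational inverse can be built by extracting, from a stable handle, the stable vector bundle carried by the framing or normal data of its attaching sphere, and checking that this extraction is well defined up to contractible ambiguity. The main obstacle is the precision of this matching: to conclude that the specific geometric rigid tube map is rationally an equivalence—rather than merely rationally injective, or nullhomotopic onto a summand of the right rational shape—one must carefully track framings, orientations, and the compatibility between handle data and vector bundle data through Waldhausen's equivalence. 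Bökstedt's original argument likely sidesteps part of this detour by working with a more tractable geometric model of $\WW_\infty$, but the essential rational input in any approach remains Borel's computation.
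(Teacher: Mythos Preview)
The paper does not prove this theorem; it states it with attribution to B\"okstedt and cites \cite{bokstedt_rational_1984}, using the result as a black box input to Corollary~\ref{cor:tube-injective} and Corollary~\ref{cor:vanishing}. So there is no ``paper's own proof'' to compare against.

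Your sketch is in the right circle of ideas: B\"okstedt's argument does pass through Waldhausen's manifold model for algebraic $K$-theory, the identification of $\WW_\infty$ with (a loop space of) the smooth Whitehead space $Wh^{\mathrm{Diff}}(*)$, the splitting $A(*)\simeq Q(S^0)\times Wh^{\mathrm{Diff}}(*)$, the rational equivalence $A(*)\simeq_\Q K(\Z)$, and Borel's computation of $K_*(\Z)\otimes\Q$. However, what you have written is an outline of inputs rather than a proof. Two points deserve care. First, the identification of $\WW_\infty$ is with $\Omega Wh^{\mathrm{Diff}}(*)$, not directly with a summand of $A(*)$; the extra loop matters for the degree shift when matching Borel's classes with $\pi_*BO\otimes\Q$. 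Second, as you yourself flag, the hard step is not computing the rational homotopy of the target but showing that the \emph{specific} rigid tube map hits the generators; B\"okstedt's paper devotes most of its effort to this identification, and your proposal of ``extracting the stable vector bundle carried by the framing of the attaching sphere'' is not yet a construction one can check is inverse to the rigid tube map. So: correct orientation, but not a proof, and the paper neither expects nor provides one.
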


In \cite{kragh_generating_2018} it was argued why that actually implies that it is injective on homotopy groups.
For the convenience of the reader we recall the proof as it is rather short.

\begin{cor} \label{cor:tube-injective}
  Waldhausen's rigid tube map $BO\to \WW_\infty$ is injective on homotopy groups.
\end{cor}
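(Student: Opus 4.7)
The plan goes as follows. Bökstedt's theorem implies that for each $n$ the kernel of $\pi_n(BO) \to \pi_n(\WW_\infty)$ is a torsion subgroup. By Bott periodicity, the groups $\pi_n(BO)$ are $\Z/2, \Z/2, 0, \Z, 0, 0, 0, \Z$ for $n = 1,\dots,8$ and repeat periodically, so all torsion is $2$-primary and concentrated in degrees $n \equiv 1, 2 \pmod 8$, each a copy of $\Z/2$. This already yields injectivity in all degrees not congruent to $1$ or $2$ modulo $8$, and reduces the problem to showing that each of the remaining $\Z/2$'s has nonzero image.

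Each such $\Z/2$-summand is generated, via the ring structure on $\pi_*(BO)$, by $\eta \beta^k$ or $\eta^2 \beta^k$ where $\eta \in \pi_1(BO)$ is the Hopf class and $\beta \in \pi_8(BO)$ is the Bott generator. Both $BO$ and $\WW_\infty$ are infinite loop spaces (the former by Bott, the latter by Waldhausen's construction), and the rigid tube map is an $H$-map. Since $\beta$ is torsion-free, Bökstedt guarantees its image has infinite order; the remaining question is whether the $\eta$-multiples $\eta\beta^k$ and $\eta^2\beta^k$ survive.

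To handle these, I would look for a detection: a target space or spectrum $X$ together with a map $\WW_\infty \to X$ such that the composite $BO \to \WW_\infty \to X$ is known to be injective on the relevant $2$-torsion. The most natural candidate is Waldhausen's $A$-theory $A(*)$, into which the tube-space spectrum maps naturally and which splits off a copy of the sphere spectrum $QS^0$. The composite $BO \to \WW_\infty \to QS^0$ is then essentially the $J$-homomorphism, and classical results on the image of $J$ (Adams, Quillen) provide injectivity on the $2$-primary part of $\pi_*(BO)$. An alternative, more hands-on route is to construct extended Stiefel--Whitney classes on $\WW_\infty$ directly from the core handle of a tube, detecting $w_1$ and $w_2$ in low degrees, and then propagate through Bott periodicity to control all $\eta$-families simultaneously.

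The main obstacle is identifying the rigid tube map precisely at the spectrum level and establishing the relevant splitting or detection; this is presumably the content of the short argument in \cite{kragh_generating_2018} that the authors will now recall. Once the correct factorization through $QS^0$ (or through Eilenberg--MacLane targets) is set up, the rest is a book-keeping application of classical $2$-primary computations combined with Bökstedt's rational equivalence.
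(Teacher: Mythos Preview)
Your overall strategy is exactly the paper's: split into cases by Bott periodicity, use B\"okstedt's rational equivalence for the torsion-free degrees $i\equiv 0,4\pmod 8$, and detect the $\Z/2$'s in degrees $i\equiv 1,2\pmod 8$ via the $J$-homomorphism. The discussion of the ring structure on $\pi_*(BO)$, the generators $\eta\beta^k$, and the infinite loop structure is unnecessary ballast---once you know $\pi_i BO=\Z/2$ in those degrees, all you need is that the $J$-homomorphism is injective there, which is Adams' result.

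The one substantive difference is in the detection map. You propose routing through $A(*)$ and its $QS^0$ splitting, which requires identifying the rigid tube map at the spectrum level and checking the composite really is $J$; you flag this yourself as ``the main obstacle.'' The paper avoids this entirely with a more elementary observation: a tube in $\R\times\R^n$ has an \emph{underlying spherical fibration} (the boundary of the core handle), and this gives a forgetful map $\WW_\infty\to BG$ directly. The composite $BO\to\WW_\infty\to BG$ is then manifestly the usual map taking a vector bundle to its sphere bundle, i.e.\ the $J$-homomorphism, and Adams' result finishes the argument. So your route is correct in outline but the paper's factorization through $BG$ is both shorter and sidesteps the spectrum-level identification you were worried about.
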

\begin{proof}
We distinguish three cases:
\begin{itemize}
\item For $i=3,5,6, 7 \pmod 8$, $\pi_i BO=0$ and the statement is vacuous.
\item For $i=0,4 \pmod 8$, $\pi_i BO = \Z$ and the injectivity follows from the rational equivalence result of Bökstedt.
\item For $i=1,2 \pmod 8$, $\pi_i BO =\Z/2$ and the map $\pi_i BO \to \pi_i BG$ is injective by a result of Adams (here $BG$ denotes the
classifying space of spherical fibrations and $BO\to BG$ is the $J$-homomorphism). The statement follows
since $BO \to BG$ factors as the rigid tube map composed with the forgetful map $\WW_\infty \to BG$ mapping a tube to its underlying spherical fibration.
\end{itemize}
\end{proof}

From Diagram~\eqref{finaldiagram} and Proposition~\ref{prop:rigidtube}, we deduce the following corollary
which is the key input for the proof of Theorems~\ref{thmintro:gaussmap} and~\ref{thmintro:homotopysphere}.

\begin{cor}\label{cor:vanishing}
The map $|B(\TT_\infty,\QQ)|\to |B(\Z,\QQ)|$ is zero on all homotopy groups.
\end{cor}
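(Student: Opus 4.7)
The plan is to deduce the corollary by a direct diagram chase on the long exact sequences of homotopy groups associated to the two horizontal fibration sequences displayed in Diagram~\eqref{finaldiagram}. The key observation is that one of the two fibrations has contractible total space, which forces its connecting homomorphism to be an isomorphism, and the comparison map between the fibers is injective on homotopy groups by the preceding results.

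More precisely, I would first note that in the top row of Diagram~\eqref{finaldiagram}, the total space $|B(\QQ_\infty,\QQ)|$ is contractible (this is exactly what the star in the diagram records, it is the second half of Theorem~\ref{thm:giroux} as the $\QQ$-quotient of a contractible space is the classifying space). Consequently the boundary map
\[
\delta_{\mathrm{top}}\colon \pi_i |B(\Z,\QQ)| \longrightarrow \pi_{i-1}\QQ'_\infty
\]
is an isomorphism for every $i\geq 1$. The fiber inclusion $\Omega|B(\Z,\QQ)|\to \QQ'_\infty$ appearing in the top row represents $\delta_{\mathrm{top}}$ up to the canonical identification $\pi_i|B(\Z,\QQ)|=\pi_{i-1}\Omega|B(\Z,\QQ)|$, and similarly $\Omega|B(\Z,\QQ)|\to \TT'_\infty$ represents the boundary map $\delta_{\mathrm{bot}}$ of the bottom row.

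By the commutativity of Diagram~\eqref{finaldiagram}, the map $\Omega|B(\Z,\QQ)|\to \TT'_\infty$ factors as the equivalence $\Omega|B(\Z,\QQ)|\simeq\QQ'_\infty$ followed by the vertical arrow $\QQ'_\infty\to \TT'_\infty$. Hence on homotopy groups
\[
\delta_{\mathrm{bot}} \;=\; \bigl(\pi_{i-1}\QQ'_\infty\to\pi_{i-1}\TT'_\infty\bigr)\circ \delta_{\mathrm{top}}.
\]
By Proposition~\ref{prop:rigidtube} the map $\QQ'_\infty\to\TT'_\infty$ is identified with $\mathrm{id}_{\Z}\times(\text{rigid tube map})\colon \Z\times BO\to \Z\times \WW_\infty$, which is injective on homotopy groups thanks to Corollary~\ref{cor:tube-injective}. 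Composing with the isomorphism $\delta_{\mathrm{top}}$, we conclude that $\delta_{\mathrm{bot}}$ is injective for every $i\geq 1$. Exactness of the bottom fibration sequence then forces the map $\pi_i|B(\TT_\infty,\QQ)|\to \pi_i|B(\Z,\QQ)|$ to vanish, proving the corollary.

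There is essentially no obstacle beyond bookkeeping here: all the real work has already been done in assembling Diagram~\eqref{finaldiagram} (in particular the contractibility input from Theorem~\ref{thm:giroux} and the identification of fibers in Proposition~\ref{prop:rigidtube}), and in quoting Bökstedt's theorem in the form of Corollary~\ref{cor:tube-injective}. The only point to be careful about is to verify that the fiber inclusions in the two fibrations genuinely represent the connecting maps, so that commutativity of the square of fibers yields the factorization of $\delta_{\mathrm{bot}}$ through $\delta_{\mathrm{top}}$; this is a standard fact about the Puppe sequence.
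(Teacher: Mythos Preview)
Your argument is correct and is essentially the same as the paper's: both deduce injectivity of the connecting map $\delta_{\mathrm{bot}}$ from the factorization through the equivalence $\Omega|B(\Z,\QQ)|\simeq\QQ'_\infty$ and the injectivity of $\QQ'_\infty\to\TT'_\infty$ on homotopy groups, and then conclude via exactness. The only omission is that your argument treats $i\geq 1$; you should also note, as the paper does, that the statement for $\pi_0$ is automatic since $|B(\Z,\QQ)|\simeq\Lambda_0(\C^\infty)$ is connected.
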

\begin{proof}
$|B(\Z,\QQ)|$ is connected so the result holds on $\pi_0$.
For $k\geq 1$, the map $\pi_k |B(\TT_\infty,\QQ)| \to \pi_k |B(\Z,\QQ)|$ is
the same as the map $\pi_{k-1}\Omega|B(\TT_\infty,\QQ)| \to \pi_{k-1} \Omega|B(\Z,\QQ)|$,
which is zero in view of Diagram~\eqref{finaldiagram} and Corollary~\ref{cor:tube-injective} which says
that the map $\pi_{k-1}\QQ_\infty'\to \pi_{k-1}\TT_\infty'$ is injective.
\end{proof}

\subsection{Proof of Theorems~\ref{thmintro:gaussmap}, \ref{thmintro:gaussJ} and~\ref{thmintro:homotopysphere}} 

\begin{proof}[Proof of Theorem~\ref{thmintro:gaussmap}]
Pick a Legendrian lift $\varphi\times z \colon L\to J^1 M$ and $s>\max_L|z|$.
According to Theorem~\ref{thm:existencetgftube}, the $s$-double of $\varphi\times z$ admits a twisted
generating function of tube type twisted by $h:M \to B(\Z,\QQ)$ where $h\circ\pi$
is the Gauss map $g_\varphi$ of $L$. Then Proposition~\ref{prop:liftgauss} above says there is a lift of $h$
through $|B(\TT_\infty, \QQ)| \to |B(\Z,\QQ)|$. Now Corollary~\ref{cor:vanishing} implies that
$h$ is zero on homotopy groups and thus also $g_\varphi$.
\end{proof}

\begin{proof}[Proof of Theorem~\ref{thmintro:gaussJ}]
There is a map $\TT_\infty \to BG$ which classifies the spherical fibration associated with the sublevel set $\{f\leq 0\}$
of a function of tube type $f$. This can be made compatible with the $\QQ$-action, where on the right-hand side,
$\QQ$ acts via the $J$-homomorphism which takes $q\in \QQ$ to the sphere $S^-(q)$ (one point-compactification of the negative eigenspace).
Hence we obtain a map $|B(\TT_\infty,\QQ)|\to BG/BO=B(G/O)$ and the homotopy commutative diagram:
\begin{center}
\begin{tikzcd}
{|B(\TT_\infty,\QQ)|} \arrow[r]\arrow[d]& B(G/O) \arrow[d]\\
{|B(\Z,\QQ)|} \arrow[r,"\sim"] & B(\Z\times BO) \arrow[d] \\
 & B(\Z\times BG)
\end{tikzcd}
\end{center}
As in the proof of Theorem~\ref{thmintro:gaussmap}, a twisted generating function of tube type provides a lift $M\to |B(\TT_\infty,\QQ)|$ of the
Gauss map $L\to |B(\Z,\QQ)|$ (under the homotopy equivalence $\pi: L\to M$), and thus also a lift to $B(G/O)$ from which
we conclude that the composition to $B(\Z\times BG)$ is null-homotopic.
\end{proof}

\begin{proof}[Proof of Theorem~\ref{thmintro:homotopysphere}]
By Theorem~\ref{thmintro:gaussmap} the stable Gauss map of $L$ is nullhomotopic since $L$ is a homotopy sphere, hence Corollary~\ref{stable-trivial-linear} says its $s$-double for small $s$ has a generating function linear at infinity. Using Theorem~\ref{thm:chekanovEG} we may turn this into a generating function linear at infinity for the $s$-double for any $s>0$. By the same argument as in the proof of Theorem~\ref{thm:existencetgftube} this is tube like for large $s$, and the generating function obtained by restricting to the tube associated to all values below 0 generates only one copy of $L$.
\end{proof}

\appendix

\section{Monoids and bundles} \label{app:monoids-and-bundles}
This appendix studies principal bundles of topological monoids and their classifying spaces, as well as associated bundles in this context. While such notions have been studied before in the literature from different points of view \cite{Segal1978,MadsenMilgram1979}, we give a self-contained account, relying on a combination of \v{C}ech and simplicial methods.

\begin{rem} \label{Nice-Assumptions}
  In this paper all spaces will have the homotopy type of CW complexes, which means weak homotopy equivalences are homotopy equivalences. All simplicial spaces will also satisfy that face and degeneracy maps are cofibrations, which implies that the geometric realizations are of CW homotopy type but also that level wise homotopy equivalences induce homotopy equivalences (for both these statements see e.g. \cite{Segal1973}).
\end{rem}

\subsection{Directed open covers and MV-maps}
\label{sec:directed-open-covers}
\begin{dfn}\label{dfn:directedcover}
  A \emph{directed open cover} of a space $X$ consists of the data of a partially ordered set
  $(I,\leq)$ and an open covering $(U_i)_{i\in I}$ indexed by $I$ such that for all
  $x\in X$ the set $I_x=\{i\in I; x\in U_i\}$ is finite and totally ordered and each $U_i$ is non-empty.
\end{dfn}

There is an obvious way to associate to a directed open cover of the target of a continuous map, a directed open cover of its source by pullback (one discards indices for resulting empty sets); in particular, one may restrict such covers to subsets of $X$.

\begin{dfn}  \label{dfn:refinment_directed_cover}
A \emph{refinement} of a directed open cover $(U_i)_{i\in I}$ consists of a directed open cover $(V_j)_{j\in J}$
together with a map $\sigma\colon J\to I$ such that for all $j\in J$, $V_j\subset U_{\sigma(j)}$
and for all $x\in X$ the restriction $\sigma\colon J_x\to I_x$
is non-decreasing.
\end{dfn}

Given a directed open cover $(U_i)_{i\in I}$ we denote the intersection of the open sets associated to a totally ordered sequence $i_0\leq\dots\leq i_n$ by 
\begin{equation}
  U_{i_0\dots i_n} = U_{i_0} \cap \cdots \cap U_{i_n}.
\end{equation}
From this construction, we obtain a simplicial space $MV((U_i), i\in I)$ (\emph{the Mayer-Vietoris blow-up}, denoted $MV(U_\bullet)$ for short)
defined as follows: the space of $n$ simplices for $n\in \N$ is defined to be 
\begin{equation}
  MV(U_\bullet)_n=\bigsqcup_{i_0\leq\dots\leq i_n} U_{i_0\dots i_n}
\end{equation}
and a non-decreasing map $\alpha\colon [n]\to[m]$ induces the continuous map
\begin{equation}
  \alpha^*\colon MV(U_\bullet)_m\to MV(U_\bullet)_n
\end{equation}
defined by the inclusions
$U_{i_0\dots i_m}\to U_{i_{\alpha(0)}\dots i_{\alpha(n)}}$ for each
$i_0\leq\dots\leq i_m$. Note that there is an obvious map
$\pi\colon|MV(U_\bullet)|\to X$.  We recall that the realization of a simplicial
space $Z$ is $|Z| =\bigsqcup_{n\in \N} (Z_n \times \Delta_n)/\sim$, where
$\Delta_n$ is the standard $n$-simplex and $\sim$ is the equivalence relation
generated by $(z, \alpha_*t) \sim (\alpha^*z,t)$ for any $z \in Z_n$,
$t\in \Delta_m$ and $\alpha\colon [m]\to[n]$ non-decreasing.  We recall that a
simplex $z \in Z_n$ is degenerate if it belongs to $\im \alpha^*$ for some
$\alpha \colon [n] \to [m]$ with $m<n$.  Denoting by $Z_n^{nd}$ the subspace of
$Z_n$ of non-degenerate simplices, we have
$|Z| = \bigsqcup_{n\in \N} (Z_n^{nd} \times \Int \Delta_n)$.

\begin{prop} \label{MV-contractible-choice}
For any directed open cover $(U_i)_{i\in I}$ of $X$, the map 
\begin{equation}\pi \colon |MV(U_\bullet)|\to X\end{equation}
is a Serre fibration with contractible fiber, hence a homotopy equivalence.
\end{prop}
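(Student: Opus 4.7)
The plan is to analyze $\pi$ in three stages: identify the fibers as contractible simplices, produce a continuous section using a partition of unity, and combine these to verify the Serre lifting property. For the fibers, I would simply unwind the definition of the realization: for any $x \in X$, the preimage $\pi^{-1}(x)$ is the realization of the sub-simplicial set of $MV(U_\bullet)$ whose $n$-simplices are the non-decreasing sequences in the finite totally ordered set $I_x$. This is the nerve of $I_x$ viewed as a poset; since $I_x$ is a non-empty finite totally ordered set, this is the standard $(|I_x|-1)$-simplex, which is contractible.

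Next, I would fix a locally finite partition of unity $(\rho_i)_{i \in I}$ subordinate to $(U_i)$ and define a section $s \colon X \to |MV(U_\bullet)|$ by
\[ s(x) = \bigl[\, x;\, (\rho_{i_0}(x), \ldots, \rho_{i_k}(x))\, \bigr] \in U_{i_0 \cdots i_k} \times \Delta^k, \qquad I_x = \{i_0 < \cdots < i_k\}. \]
The continuity of $s$ hinges on the fact that, as $x$ approaches the boundary of some $U_{i_j}$, the coefficient $\rho_{i_j}(x)$ vanishes, and the corresponding face identification in the realization collapses that vertex, exactly mirroring the removal of $i_j$ from $I_x$. From this section I would construct a fiberwise straight-line homotopy from $\id$ to $s \circ \pi$ by linearly interpolating inside each simplex fiber $\pi^{-1}(\pi(p)) \cong \Delta^{|I_{\pi(p)}|-1}$.

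Finally, for the Serre fibration property, given a lifting problem with $H \colon D^n \times [0,1] \to X$ and $\tilde H_0$ lifting $H|_{t=0}$, I would use compactness to take a fine subdivision of $D^n \times [0,1]$ such that the image of each closed cell in $X$ lies in some $U_{i_0 \cdots i_p}$, and extend the lift cell by cell: on each new cell, I interpolate within the ambient simplex fiber between the already-defined values on the cell's boundary and the value $s$ assigns to the image in $X$, using the partition of unity to introduce the correct ``new'' indices. Together with the contractibility of the fibers just established, the long exact sequence of the resulting fibration forces $\pi$ to be a weak equivalence, and hence, under the CW-type hypothesis on all spaces, a genuine homotopy equivalence. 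The main technical hurdle will be ensuring continuity of these piecewise lifts across the loci where $|I_x|$ jumps; this is precisely what the simplicial face identifications, dovetailed with the vanishing of the $\rho_i$ at the boundaries of their supports, are designed to control.
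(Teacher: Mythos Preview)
Your identification of the fiber as a simplex and your use of a partition of unity to produce a section are exactly what the paper does. Where you diverge is in the Serre fibration argument: you propose a cell-by-cell extension using fiberwise linear interpolation, whereas the paper exploits a cleaner reformulation. Namely, since a point of $|MV(U_\bullet)|$ over $x$ is precisely a tuple of barycentric weights $(\rho_i(x))_{i \in I_x}$, a lift of any map $E \to X$ is \emph{literally the same data} as a partition of unity on $E$ subordinate to the pulled-back cover. The lifting problem for $H\colon D^n\times[0,1]\to X$ with initial lift $\tilde H_0$ thus becomes: extend a partition of unity on $D^n\times\{0\}$ (subordinate to $H^{-1}(U_i)$) to one on $D^n\times[0,1]$, which is immediate.

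Your approach is not wrong, but the interpolation step as you describe it is underspecified: when you write ``interpolate within the ambient simplex fiber between the already-defined values on the cell's boundary and the value $s$ assigns,'' you would first need to extend the boundary lift over the whole cell as a section of $\pi$ before interpolating, and you have to argue that such an extension exists and can be chosen continuously across adjacent cells with possibly different index sets. This can be made to work, but it amounts to rebuilding the partition-of-unity extension by hand. The paper's reformulation sidesteps all of this bookkeeping in one line, and I would recommend adopting it.
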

\begin{proof}
  For a given $x\in X$ the fiber $\pi^{-1}(x)$ is canonically homeomorphic to
  $\tilde x \times \Delta_p$, where $\tilde x \in MV(U_\bullet)_p$ is the
  non-degenerate simplex defined by $x \in U_{i_0\dots i_p}$ with $i_0<\dots<i_p$ and
  $\{i_0,\dots,i_p\} = \{i\in I; x\in U_i\}$.  We write the barycentric coordinates
  of $y\in \pi^{-1}(x)$ as $\rho_{i_j}(y)\in[0,1]$, $0\leq j \leq p$, with
  $\sum_j \rho_{i_j}(y)=1$.  Hence the fiber $\pi^{-1}(x)$ is a simplex (hence
  contractible), and (local) sections of $\pi$ correspond precisely to partitions of
  unity (i.e. functions $\rho_i\colon X\to [0,1]$, $\supp(\rho_i)\subset U_i$,
  $\sum_i \rho_i=1$).  Lifting a map $E\to X$ to $|MV(U_\bullet)|$ amounts to giving
  a partition of unity on $E$ with respect to the pullback open cover.  Checking that
  $\pi$ is a Serre fibration now boils down to the fact that a partition of unity on
  $D^k$ can be extended to $D^k\times [0,1]$, with respect to a given open cover of
  $D^k\times[0,1]$.
\end{proof}

\begin{dfn}
  A simplicial set $Z$ is \emph{directed} if it is equipped with a partial order
  $\leq$ on its set of vertices $Z_0$ such that for each simplex $\sigma \in Z_n$, we
  have $\alpha_0^* \sigma \leq \dots \leq \alpha_n^* \sigma$, where
  $\alpha_i\colon [0]\to[n]$ maps $0$ to $i$, and the map $Z_n \to Z_0^{n+1}$,
  $\sigma \mapsto (\alpha_0^* \sigma , \dots , \alpha_n^* \sigma)$ is injective.
\end{dfn}

When $Z_n \to Z_0^{n+1}$ is injective, a simplex $\sigma \in Z_n$ with
$\alpha_i^*(\sigma) = \alpha_{i+1}^*(\sigma)$ is degenerate.  In other words, if
$\sigma$ is non-degenerate, then $\alpha_0^* \sigma < \dots < \alpha_n^* \sigma$.

For a simplicial set $Z$, we write 
\begin{equation}
    q\colon \bigsqcup_{n\in \N} (Z_n \times \Delta_n) \to |Z|
\end{equation}
for the quotient map. We then associate to each simplex $\sigma \in Z_m$ the associated subset $|\sigma| \subset  |Z|$ which is the image of $\{\sigma\} \times \Delta_m $. We denote the interior of this geometric simplex by $\Int |\sigma| $, and the {\em star of $\sigma$} by
\begin{equation}
   \st_Z(\sigma) = \bigcup_{n\in \N} \bigcup_{\tau \in Z_n(\sigma)} \Int |\tau| 
\end{equation}
where $Z_n(\sigma)$ consists of all  $n$-simplices through which $\sigma$ factors.
Since it is a union of open sets, $\st_Z(\sigma)$ is an open subset of $|Z|$ (we write $\st(\sigma)$ if there is no ambiguity).  We have a
partition
\begin{equation}
  \label{eq:dec_star}
\st_Z(\sigma) = \bigsqcup_{n\in \N} \bigsqcup_{\tau \in Z^{nd}_n(\sigma)} \Int |\tau|
\end{equation}
where $Z^{nd}_n(\sigma)$ is the subset of $Z_n(\sigma)$ consisting of non-degenerate
simplices.

\begin{prop}\label{prop:starcover}
  Let $Z$ be a directed simplicial set. The stars of the vertices
  $(\st_Z(i))_{i\in Z_0}$ form a directed open cover of the geometric realization
  $|Z|$.  For $\sigma\in Z_n$ we have
  $\st_Z(\sigma) = \bigcap_{i=0}^n \st_Z(\alpha_i^*(\sigma))$, where
  $\alpha_i\colon [0]\to[n]$ maps $0$ to $i$.  Denoting
  $MV(\st_Z) = MV(\st_Z(i), i\in Z_0)$ for short, we have natural identifications
  $(MV(\st_Z))_n = \bigsqcup_{\sigma \in Z_n} \st(\sigma)$, for $n\in \N$, and
  \begin{equation}
  |MV(\st_Z)|
  =(\bigsqcup_{n\in \N}\bigsqcup_{\sigma \in Z_n} \st(\sigma) \times \Delta_n)/\sim,
\end{equation}
where $(x,\alpha_*t)\sim (i_\alpha(x),t)$ and $i_\alpha$ is the inclusion
$\st(\sigma)\to \st(\alpha^*\sigma)$.
\end{prop}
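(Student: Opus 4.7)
The proof splits into four assertions; parts (1) and (2) carry the substance, while (3) and (4) follow by unwinding the definitions of $MV$ and of geometric realization. Starting with (1), the openness of $\st_Z(i)$ is immediate from its definition as a union of open sets, and any $x \in |Z|$ lies in the interior of a unique non-degenerate simplex $\tau \in Z_m^{nd}$, hence in $\st_Z(v)$ for each vertex $v$ of $\tau$, which proves the covering property. To see directedness, the decomposition \eqref{eq:dec_star} together with the disjointness of interiors of distinct non-degenerate simplices forces the set $\{i \in Z_0 : x \in \st_Z(i)\}$ to coincide with the vertex set of $\tau$, which is finite and totally ordered by hypothesis on $Z$.

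Part (2) is where the real content lies, and I expect it to be the main obstacle. The claim reduces to the following characterization: in a directed simplicial set, a non-degenerate simplex $\tau$ contains $\sigma \in Z_n$ as a (possibly degenerate) face iff the vertex tuple of $\sigma$ is a weakly ordered subsequence of that of $\tau$. The forward direction is immediate. The converse is precisely where the injectivity of $Z_n \to Z_0^{n+1}$ is used: given that the vertices of $\sigma$ occur as vertices of $\tau$, the prescribed vertex tuple defines a face-then-degeneracy of $\tau$ lying in $Z_n$, and the injectivity hypothesis forces it to equal $\sigma$. Once this characterization is in hand, both $\st_Z(\sigma)$ and $\bigcap_i \st_Z(\alpha_i^*\sigma)$ decompose by \eqref{eq:dec_star} into unions of $\Int|\tau|$ over exactly the same set of non-degenerate $\tau$---those whose vertex set contains $\{\alpha_0^*\sigma, \dots, \alpha_n^*\sigma\}$---yielding equality.

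For (3) and (4), I would substitute $U_i = \st_Z(i)$ into Definition~\ref{dfn:directedcover} to write
\[(MV(\st_Z))_n = \bigsqcup_{i_0 \leq \dots \leq i_n \text{ in } Z_0} \bigcap_{j=0}^n \st_Z(i_j),\]
then use directedness to match each non-empty indexing tuple with the unique $\sigma \in Z_n$ having those vertices, and apply part (2) to identify the intersection with $\st_Z(\sigma)$; this is (3). Inserting this level-wise description into the standard formula for the geometric realization of a simplicial space then gives (4), once one checks that the induced simplicial structure map $\alpha^*$ sends the component indexed by $\sigma$ into the component indexed by $\alpha^*\sigma$ via the inclusion $\st_Z(\sigma) \hookrightarrow \st_Z(\alpha^*\sigma)$, which is exactly the map $i_\alpha$ in the statement.
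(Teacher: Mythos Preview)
Your proof is correct and follows essentially the same approach as the paper's own proof. Both arguments identify the set $\{i \in Z_0 : x \in \st_Z(i)\}$ with the vertex set of the unique non-degenerate simplex containing $x$ in its interior, and both pivot part~(2) on the injectivity of $Z_n \to Z_0^{n+1}$ to show that a non-degenerate $\tau$ has $\sigma$ as a face once the vertices of $\sigma$ occur among those of $\tau$; the paper additionally notes that one may first replace $\sigma$ by its underlying non-degenerate simplex (same vertices, same star), which is a slight shortcut you absorb into your ``face-then-degeneracy'' phrasing.
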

\begin{proof}
  For $x \in |Z|$, there exists a simplex $\sigma \in Z_n$ such that $x$ belongs to
  $\Int |\sigma|$. Then, for $i\in Z_0$, $x \in \st_Z(i)$ if and only if $i$ is a
  vertex of $\sigma$. By definition the vertices of $\sigma$ form a finite and
  totally ordered set.

  Any $\sigma \in Z_n$ can be writen $\sigma = \beta^*(\sigma')$, where $\sigma'$ is
  non-degenerate. Then $\sigma$ and $\sigma'$ have the same vertices and we have
  $\st(\sigma) = \st(\sigma')$. Hence to prove that
  $\st_Z(\sigma) = \bigcap_{i=0}^n \st_Z(\alpha_i^*(\sigma))$ we can assume that
  $\sigma$ is non-degenerate.  For $\tau \in Z_k$, if $|\tau|$ is contained in
  $\bigcap_{i=0}^n \st_Z(\alpha_i^*(\sigma))$, then all the vertices of $\sigma$ are
  vertices of $\tau$. By the injectivity of $Z_n \to Z_0^{n+1}$, this implies that
  $\sigma$ is a face of $\tau$, hence $|\tau| \subset \st(\sigma)$.  We deduce
  $\st_Z(\sigma) = \bigcap_{i=0}^n \st_Z(\alpha_i^*(\sigma))$. In turn this implies
  that $(MV(\st_Z))_n$ and $|MV(\st_Z)|$ are as described in the proposition.
\end{proof}

The natural homotopy equivalence $\pi\colon|MV(\st_Z)| \to |Z|$ (see Proposition
\ref{MV-contractible-choice}) has a canonical right inverse
\begin{equation}
  s\colon|Z| \to |MV(\st_Z)|
\end{equation}
defined as follows: for $\sigma \in Z_n$ and $t\in \Int \Delta_n$,
$s(\sigma,t)=(j_\sigma(t),t)$ where
$j_\sigma\colon \Int \Delta_n\to \st(\sigma)$ is the composition of the quotient
map $\Int \Delta_n \to \Int |\sigma|$ and the inclusion of
$\Int |\sigma| \to \st(\sigma)$. We can check that this defines a continuous map
(i.e. correctly glues at the boundary of simplices) and we have
$\pi\circ s=\id$; indeed for $x\in \st(\sigma)$ and $t\in \Delta_n$, we have
$\pi(x,t)=x \in |Z|$.

For any simplicial space $S$ we let $\diag \sing S$ denote the diagonal
simplicial set in the bi-simplicial set of degree-wise singular simplices in
$S$ (in degree $p$, this is the set of continuous maps from the simplex $\Delta^p$
to the space $S_p$). We have a natural map $|\diag \sing S| \to |S|$ which is a homotopy
equivalence (cf Remark~\ref{Nice-Assumptions}). For the next result, we write $\Hom$ for the set of morphisms of simplicial sets:

\begin{lem}\label{lem:MVB-simplicial}
  Let $Z$ be a directed simplicial set and $S$ a simplicial space which is degreewise Hausdorff.
There is a map $\Phi\colon \Hom(Z,\diag\sing S) \to \Hom(MV(\st_Z),S)$ such that
for each $f\colon Z \to \diag \sing S$ the canonical diagram
  \begin{center}
    \begin{tikzcd}
      {|MV(\st_Z)|} \ar[r, "|\Phi(f)|"] & {|S|} \\
      {|Z|} \ar[r, "|f|"] \ar[u, "\simeq", "s"'] & {|\diag \sing S|} \ar[u, "\simeq"] \\      
    \end{tikzcd}    
  \end{center}
  commutes.

Moreover, a simplicial map $g\colon MV(\st_Z) \to S$ is of the form $\Phi(f)$
if and only if for each $\sigma\in Z_n$ the map $g_n\colon \st(\sigma) \to S_n$
extends continuously to the closure $\overline{\st(\sigma)}$.

Finally, there is a map $\Psi\colon \im \Phi \to \Hom(Z,\diag\sing S)$ such that for each $g\in \im \Phi$, the diagram
  \begin{center}
    \begin{tikzcd}
      {|MV(\st_Z)|} \ar[r, "|g|"] & {|S|} \\
      {|Z|} \ar[r, "|\Psi(g)|"] \ar[u, "\simeq", "s"'] & {|\diag \sing S|} \ar[u, "\simeq"] \\      
    \end{tikzcd}    
  \end{center}
  commutes and moreover $\Phi\circ \Psi=\id$ on $\im \Phi$.
\end{lem}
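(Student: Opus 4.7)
The plan is to define $\Phi(f)$ stratum by stratum on each open piece $\st_Z(\sigma)\subset(MV(\st_Z))_n$ using the canonical non-degenerate simplex through a point, and then to use the simpliciality of $f$ to verify continuity, naturality, and the commutativity of the diagram. Concretely, for $(\sigma,y)\in Z_n\times \st_Z(\sigma)$, I would use the partition~\eqref{eq:dec_star} to write $y$ uniquely as $(\tau,t)$ with $\tau\in Z_m$ non-degenerate and $t\in\Int\Delta_m$; since $\sigma$ factors through $\tau$ and $\tau$ has distinct vertices (by directedness of $Z$), there is a unique non-decreasing $\gamma\colon[n]\to[m]$ with $\sigma=\gamma^*\tau$, determined by matching vertices. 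I would then set
\[\Phi(f)_n(\sigma,y)=\gamma^*\bigl(f_m(\tau)(t)\bigr).\]

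All verifications reduce routinely to the simpliciality of $f$. For the simplicial naturality of $\Phi(f)$: given $\alpha\colon[k]\to[n]$, the non-degenerate simplex and face map associated to $\alpha^*(\sigma,y)=(\alpha^*\sigma,y)$ are $(\tau,\gamma\circ\alpha)$, so $\Phi(f)_k(\alpha^*\sigma,y)=(\gamma\circ\alpha)^*(f_m(\tau)(t))=\alpha^*\Phi(f)_n(\sigma,y)$. For continuity on $\st_Z(\sigma)$: as $y$ approaches the boundary of a stratum $\Int|\tau|$ and crosses into a face stratum $\Int|\tau'|\subset\st_Z(\sigma)$ with $\tau'=\beta^*\tau$ and $\gamma=\beta\circ\gamma'$ (where $\sigma=\gamma'^*\tau'$), the identity $f_k(\tau')=\beta^*\circ f_m(\tau)\circ\beta_*$ makes the two piecewise formulas agree in the limit. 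Commutativity of the first diagram reduces, on pairs $(\sigma,t)$ with $t\in\Int\Delta_n$, to $\Phi(f)_n(\sigma,j_\sigma(t))=f_n(\sigma)(t)$: writing $\sigma=\delta^*\sigma''$ with $\sigma''$ non-degenerate, the canonical non-degenerate simplex through $j_\sigma(t)$ is $\sigma''$ with face map $\gamma=\delta$, and the formula reduces to $\delta^*(f_k(\sigma'')(\delta_*t))=f_n(\delta^*\sigma'')(t)$ by simpliciality of $f$; continuity then extends the identity to all of $|Z|$.

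For the image characterization and $\Psi$: when $g=\Phi(f)$, each $g_n|_{\st_Z(\sigma)}$ extends continuously to $\overline{\st_Z(\sigma)}$ by using the formula on every non-degenerate simplex $\tau\geq\sigma$ lying in the closure. Conversely, when each $g_n|_{\st_Z(\sigma)}$ admits a continuous extension $\tilde g_n\colon\overline{\st_Z(\sigma)}\to S_n$, such an extension is unique by density of $\st_Z(\sigma)$ in its closure and the degreewise Hausdorff assumption on $S$. I would then set
\[\Psi(g)_n(\sigma)=\tilde g_n\circ q_\sigma\colon\Delta_n\to S_n,\]
where $q_\sigma\colon\Delta_n\to|\sigma|\subset\overline{\st_Z(\sigma)}$ is the canonical quotient map. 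Uniqueness of the extension transports the identity $g_k\circ\alpha^*=\alpha^*\circ g_n$, valid on $\st_Z(\sigma)$, to the entire closure, yielding the simpliciality of $\Psi(g)$. The relation $\Phi\circ\Psi=\id$ is then a direct computation: at $(\sigma,y)$ with $y\in\Int|\tau|$, the left-hand side unwinds to $\gamma^*\tilde g_m(y)$, which by simpliciality of $g$ equals $g_n(\gamma^*\tau,y)=g_n(\sigma,y)$. Commutativity of the second diagram then follows formally from the first applied to $f=\Psi(g)$ together with $\Phi(\Psi(g))=g$.

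The main obstacle will be the continuity check for $\Phi(f)$ on each $\st_Z(\sigma)$: the piecewise definition associated to the strata $\Int|\tau|$ must cohere across face inclusions, and this coherence is precisely the content of the simplicial relations satisfied by $f$.
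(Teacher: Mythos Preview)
Your proposal is correct and follows essentially the same construction as the paper's proof: both define $\Phi(f)$ on each stratum $\Int|\tau|\subset\st(\sigma)$ via $\gamma^*\circ f(\tau)$ with $\gamma$ the unique face map realizing $\sigma=\gamma^*\tau$, and both define $\Psi(g)(\sigma)$ by restricting the continuous extension $\tilde g_\sigma$ to $|\sigma|\subset\overline{\st(\sigma)}$. You have in fact spelled out the continuity, simpliciality, and diagram-commutativity checks more explicitly than the paper, which simply asserts that these verifications go through.
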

\begin{proof}
  For a given $f\colon Z \to \diag \sing S$ we define $\Phi(f)$. We first define
  $\Phi^\sigma(f) \colon \st(\sigma)\to S_n$ for $\sigma \in Z_n$ using the
  decomposition~\eqref{eq:dec_star}.  Let $\tau \in Z_m$ be a non-degenerate simplex
  such that $\alpha^*\tau = \sigma$ for some $\alpha \colon [n] \to [m]$. Since
  $\tau$ is non-degenerate and $Z$ directed, $\alpha$ is unique.  We have
  $\Int |\tau| = \Int \Delta_m$ and we define
  $\Phi^\sigma_\tau(f) = \alpha^*\circ f(\tau) \colon \Int|\tau| \to S_n$, with
  $f(\tau)\colon \Int \Delta_m\to S_m$.  Using the fact that $f(\tau)$ is
  actually defined on $\Delta_m$, we can check that the $\Phi^\sigma_\tau(f)$'s give
  a continuous map $\Phi^\sigma(f) \colon \st(\sigma)\to S_n$.  Using
  $(MV(\st_Z))_n = \bigsqcup_{\sigma \in Z_n} \st(\sigma)$ (see
  Proposition~\ref{prop:starcover}) we obtain
  $\Phi(f)_n \colon (MV(\st_Z))_n \to S_n$.  We can check that this gives a
  simplicial map $\Phi(f) \colon MV(\st_Z)\to S$ with all requirements.  (The second
  statement follows from the already used fact that $f(\tau)$ is defined on
  $\Delta_m$.)

  Now for a given $g$ we define $\Psi(g)$.  For $\sigma \in Z_n$ we have a map
  $g_\sigma\colon \overline{\st(\sigma)}\to S_n$ and we define
  $\Psi(g)(\sigma)=g_\sigma \circ i_\sigma$ where $i_\sigma$ is the natural map
  $\Delta_n = |\sigma|\to \overline{\st \sigma}$.
\end{proof}

\begin{dfn}\label{dfn:equivMVmaps}
  An MV-map over $X$ is a map $g\colon MV(U_\bullet)\to S$ of simplicial spaces, for some directed
  open cover $(U_i)_{i\in I}$ of $X$.  We say that two maps
  $g_0\colon MV(U_\bullet)\to S$ and $g_1\colon MV(V_\bullet)\to S$ with respect
  to directed open covers $(U_i)_{i\in I}$ and $(V_j)_{j\in J}$ of $X$ are
  \emph{equivalent} if there is a directed open cover $(W_k)_{k\in K}$ of
  $X\times [0,1]$ a map $H\colon MV(W_\bullet)\to S$ such that $H_{\mid X\times \{i\}}=g_i$ (in particular, we assume that $(W_k)_{k\in K}$   restricts to $(U_i)_{i\in I}$ and $(V_j)_{j\in J}$ at the endpoints of the interval).
\end{dfn}

\begin{rem}
  If $f_0,f_1\colon X\to Y$ are homotopic maps and $MV(Y_\bullet)\to S$ is a
  simplicial map then the pullbacks $MV(f_0^{-1}(Y_\bullet))\to S$ and
  $MV(f_1^{-1}(Y_\bullet))\to S$ are equivalent.
\end{rem}

\begin{lem}
  Refining an MV-map gives an equivalent MV-map.
\end{lem}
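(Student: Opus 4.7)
The plan is to construct a directed open cover $(W_k)_{k\in K}$ of $X\times [0,1]$ together with an MV-map $H\colon MV(W_\bullet)\to S$ restricting to $g$ at $t=1$ and to the pullback $g'=g\circ\sigma_*$ at $t=0$, where $\sigma_*\colon MV(V_\bullet)\to MV(U_\bullet)$ is the simplicial map induced by the refinement $\sigma\colon J\to I$. My candidate takes $K=I\sqcup J$ with opens
\[W_i = U_i\times(\tfrac13,1]\text{ for }i\in I, \qquad W_j = V_j\times [0,\tfrac23)\text{ for }j\in J,\]
and the partial order on $K$ that restricts to the given orders on $I$ and $J$ while placing every element of $J$ strictly below every element of $I$. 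Each fibre $K_{(x,t)}$ is then totally ordered (just $J_x$ for $t\leq\tfrac13$, just $I_x$ for $t>\tfrac23$, and $J_x$ concatenated below $I_x$ in the overlap), so $(W_k)_{k\in K}$ is a directed open cover of $X\times [0,1]$ that restricts to $(V_j)$ at $t=0$ and to $(U_i)$ at $t=1$.

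I would then define $H$ on a non-degenerate $n$-simplex of the form $j_0<\cdots<j_m<i_0<\cdots<i_{n-m-1}$ (with the pure-$J$ and pure-$I$ cases at $m=n$ and $m=-1$) by
\[H_{j_0\cdots j_m\,i_0\cdots i_{n-m-1}} = g_{\sigma(j_0)\cdots \sigma(j_m)\,i_0\cdots i_{n-m-1}}\]
on $W_{j_0\cdots j_m\,i_0\cdots i_{n-m-1}}$. At $t=0$ only pure-$J$ chains survive and $H$ coincides with $g\circ\sigma_*=g'$; at $t=1$ only pure-$I$ chains survive and $H$ coincides with $g$. The simplicial identities for $H$ then reduce, via the naturality of $\sigma_*$ with respect to faces and degeneracies, to those for $g$.

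The main obstacle is well-definedness on the mixed intersections: for $g_{\sigma(j_0)\cdots \sigma(j_m)\,i_0\cdots i_{n-m-1}}$ to be a simplex of $MV(U_\bullet)$, the concatenated sequence must be non-decreasing in $I$ on every point of $W_{j_0\cdots j_m\,i_0\cdots i_{n-m-1}}$. Internal monotonicity within each block is immediate from directedness of the covers and the fibrewise monotonicity of $\sigma$, and because any point $x$ of the intersection forces both $\sigma(j_m)$ and $i_0$ into the totally ordered set $I_x$, their relative order in $I$ is actually constant on the intersection; the only potential failure is that $i_0<_I\sigma(j_m)$, which happens precisely for pairs $(j,i)\in J\times I$ with $V_j\cap U_i\neq\varnothing$ and $i<_I\sigma(j)$. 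I would eliminate such pairs by first passing to a further refinement of $(V_j)$, replacing each $V_j$ by the open subset $V_j\setminus\bigcup_{i<_I\sigma(j)}\overline{U_i}$ and filling in any uncovered portion by further refinement opens drawn from the $V_{j'}$'s with $\sigma(j')\leq_I i$. Since shrinking the cover and keeping the same index set is the easier ``sub-cover'' case of the lemma, for which the construction above works verbatim with $K=I$ and $W_i=V_i\times[0,\tfrac23)\cup U_i\times(\tfrac13,1]$, the pullback MV-map on the shrunken refinement is equivalent to $g'$. After this preparation the bridging condition holds uniformly on every mixed intersection, $H$ is well-defined, and transitivity of the equivalence relation yields $g\sim g'$, as required.
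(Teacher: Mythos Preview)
Your overall strategy—build an MV-map on $X\times[0,1]$ with index set $K=I\sqcup J$ and cylinder opens restricting to the two covers at the endpoints—is exactly the paper's. The gap is in your order on $K$ and the attempted repair. Placing all of $J$ below all of $I$ forces you to need $\sigma(j_m)\leq_I i_0$ on every mixed chain, and your shrink-and-fill procedure does not achieve this in general. Take $X=\R$, $I=\{1<2\}$ with $U_1=(-\infty,1)$, $U_2=(-1,\infty)$, and the refinement $J=\{a<b\}$, $V_a=U_1$, $V_b=U_2$, $\sigma(a)=1$, $\sigma(b)=2$. The bad pair is $(b,1)$: on $W_{b,1}$ you would need the nonexistent simplex $g_{2,1}$. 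Your shrinking replaces $V_b$ by $V_b\setminus\overline{U_1}=(1,\infty)$, leaving the point $1$ uncovered; the only $j'$ with $\sigma(j')\leq_I 1$ is $a$, and $1\notin V_a$, so there is nothing to fill in. The repair step is thus unavailable, and the argument does not close.

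The paper sidesteps the issue entirely by choosing a finer order on $K$: rather than $J<I$ wholesale, it interleaves via $\sigma$, declaring for $i\in I$, $j\in J$ that $j<i$ iff $\sigma(j)\leq_I i$ and $i<j$ iff $i<_I\sigma(j)$. With this order the map $\sigma'\colon K\to I$ (identity on $I$, $\sigma$ on $J$) is automatically non-decreasing on every fibre $K_{(x,t)}$, so $((W_k)_{k\in K},\sigma')$ is itself a refinement of $(U_i\times[0,1])_{i\in I}$. One then simply pulls back $g$ along $\sigma'$; the resulting MV-map on $X\times[0,1]$ restricts to $g$ and $\sigma^*g$ at the two ends, with no shrinking or filling required.
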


\begin{proof}
  For this we construct an MV-map over $X\times[0,1]$ associated with a refinement. Let $g$
  be an MV-map indexed by a directed open cover $(U_i)_{i\in I}$ of $X$ and let
  $((V_j)_{j\in J},\sigma\colon J\to I)$ be a refinement of $(U_i)_{i\in I}$.
  Let $K=I\sqcup J$ with the order $k<l$ if ($k,l\in I$ and $k<l$) or
  ($k,l \in J$ and $k<l$) or ($k\in I$, $l\in J$, $k< \sigma(l)$) or ($k\in J$,
  $l\in I$, $\sigma(k)\leq l$). Then consider the open cover indexed by $K$
  given by $W_k=U_k\times [0,\frac{2}{3})$ for $k \in I$ and
  $W_k=V_k\times (\frac{1}{3},1]$ for $k\in J$.  We can check that it is a
  directed open cover and that the map $\sigma'\colon I\cup J\to I$, given by
  $\sigma'(i)=i$ for $i\in I$ and $\sigma'(j)=\sigma(j)$ for $j\in J$, defines a
  refinement of the directed open cover $(U_i\times[0,1])_{i\in I}$. Let $g'$ be
  the pullback of $g$ along the projection $X\times [0,1]\to X$ and
  $g''=(\sigma')^*g$ the pullback along the refinement $\sigma'$.  Then $g''$ is
  an MV-map over $X\times [0,1]$. Moreover $g$ and $\sigma^* g$ are refinements
  of the restrictions of $g''$ to $X\times \{0\}$ and $X\times \{1\}$
  respectively, along the inclusion maps $\sigma_0\colon I\to I\cup J$ and
  $\sigma_1\colon J\to I\cup J$.
\end{proof}

The following classification result is now a consequence of simplicial approximation.

\begin{cor} \label{cor:simp-approx}
  Let $X$ be a space, $S$ a simplicial space and $f\colon X\to |S|$ a continuous map.
There exists a directed open cover $(U_i)_{i\in I}$ of $X$ and a simplicial map
$g\colon MV(U_\bullet)\to S$ such that $f\circ \pi$ and $|g|$ are homotopic, where
$\pi\colon |MV(U_\bullet)|\to X$ is the natural projection. Moreover $g$ is unique up to equivalence.
\end{cor}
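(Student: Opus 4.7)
The plan is to interpret Corollary~\ref{cor:simp-approx} as a version of the simplicial approximation theorem, using Lemma~\ref{lem:MVB-simplicial} as the bridge from simplicial maps into $\diag\sing S$ to MV-maps into $S$.

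For existence, I first lift $f$ through the natural weak equivalence $\iota\colon |\diag \sing S| \to |S|$ to a map $\tilde f \colon X \to |\diag \sing S|$; this is valid since all spaces are of CW homotopy type by Remark~\ref{Nice-Assumptions}. I then let $Z := \mathrm{Sd}(\diag \sing S)$ denote the barycentric subdivision, which is canonically a directed simplicial set (its $n$-simplices are chains of non-degenerate simplices of $\diag\sing S$). The last-vertex map $p\colon Z \to \diag \sing S$ realizes to a homotopy equivalence $|p|\colon |Z| \to |\diag\sing S|$, so I choose a homotopy inverse $q$ of $|p|$ and form $\phi := q \circ \tilde f \colon X \to |Z|$.

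Proposition~\ref{prop:starcover} provides a directed open cover $(\st_Z(i))_{i\in Z_0}$ of $|Z|$; pulling back along $\phi$ yields a directed open cover $(U_i)_{i\in I}$ of $X$, where $I \subset Z_0$ indexes the nonempty $U_i := \phi^{-1}(\st_Z(i))$. The map $\phi$ then induces a simplicial map $\bar\phi\colon MV(U_\bullet) \to MV(\st_Z)$, and I set $g := \Phi(p) \circ \bar\phi$, where $\Phi$ is the operator from Lemma~\ref{lem:MVB-simplicial}. The homotopy $|g| \simeq f \circ \pi$ follows from a diagram chase: the commutative square of Lemma~\ref{lem:MVB-simplicial} gives $|\Phi(p)| \circ s_Z = \iota \circ |p|$, while the contractibility of the fibers of $\pi_Z\colon |MV(\st_Z)| \to |Z|$ (Proposition~\ref{MV-contractible-choice}) forces $|\bar\phi| \simeq s_Z \circ \phi \circ \pi$ since both lift $\phi\circ \pi$ along $\pi_Z$. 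Chaining these with $|p|\circ q \simeq \mathrm{id}$ and $\iota\circ\tilde f \simeq f$ gives the desired equivalence.

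For uniqueness, given two MV-maps $g_0\colon MV(U^0_\bullet)\to S$ and $g_1\colon MV(U^1_\bullet)\to S$ realizing $f$, I apply a relative version of the existence construction to a map $F\colon X\times[0,1] \to |S|$ chosen to coincide with $|g_0|\circ \pi_0$ near $t=0$, with $|g_1|\circ\pi_1$ near $t=1$, and to interpolate through $f$ in the middle. The directed open cover of $X\times[0,1]$ produced by the pullback procedure can be engineered to restrict at the endpoints to refinements of $(U^0_\bullet)$ and $(U^1_\bullet)$, so that the resulting MV-map restricts to refinements of $g_0$ and $g_1$. The lemma preceding Corollary~\ref{cor:simp-approx} (refinement yields an equivalent MV-map) then completes the proof. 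The main obstacle is this relative step: engineering the cover of $X\times[0,1]$ so the pullback procedure is compatible with the prescribed data at the endpoints, which amounts to extending the simplicial structures associated to $g_0, g_1$ from the boundary of the cylinder to the interior by carefully choosing the lift $\tilde F\colon X\times[0,1]\to |\diag\sing S|$.
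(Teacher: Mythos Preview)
Your existence argument is correct and takes a somewhat different route than the paper: the paper triangulates the \emph{source} $X$ by a directed simplicial set $Z$, represents $f\circ\theta$ (up to homotopy) by a simplicial map $h\colon Z\to\diag\sing S$ via simplicial approximation, applies $\Phi$, and then pulls back along a homotopy inverse of $\theta$. You instead subdivide the \emph{target}, taking $Z=\mathrm{Sd}(\diag\sing S)$ with its last-vertex map $p$, and pull the star cover back along a continuous map $\phi\colon X\to|Z|$. Both are valid; your version avoids triangulating $X$ but uses a much larger indexing set. The diagram chase you give for $|g|\simeq f\circ\pi$ is fine.

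The uniqueness argument, however, has the gap you yourself flag, and it is not merely a technical wrinkle. Applying your existence procedure to a homotopy $F\colon X\times[0,1]\to|S|$ produces \emph{some} MV-map over $X\times[0,1]$, but its restrictions to $X\times\{0,1\}$ bear no a priori relation to the given $g_0,g_1$: you only know they realize homotopic maps to $|S|$, which is exactly the statement you are trying to prove. The phrase ``can be engineered to restrict at the endpoints to refinements of $(U^0_\bullet)$ and $(U^1_\bullet)$'' is where the actual content would have to live, and nothing in your construction gives control at that level. The paper handles this by a genuinely different mechanism: it first pulls both $g_0,g_1$ back to a \emph{common} star cover $MV(\st_Z)$ via a fine triangulation $\theta\colon|Z|\to X$, then invokes the inverse map $\Psi$ of Lemma~\ref{lem:MVB-simplicial} to write $\theta^*g_i=\Phi(f_i)$ for honest simplicial maps $f_i\colon Z\to\diag\sing S$, and finally uses \emph{relative} simplicial approximation on a subdivision $Y$ of $Z\times\Delta^1$ to extend $f_0,f_1$ to a single simplicial map $h$; then $\Phi(h)$ is the required equivalence. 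The key ingredient you are missing is this use of $\Psi$ to convert arbitrary MV-maps back into simplicial maps, which is what makes the relative step tractable.
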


\begin{proof}
  Pick a directed simplicial approximation $\theta\colon |Z| \to X$ fine enough
  so that we can up to homotopy represent the composition
  
\[|Z| \xrightarrow{\theta} X \xrightarrow{f} |S| \simeq |\diag\sing S|\]
by a simplicial
  map $h\colon Z\to \diag\sing S$. Using Lemma~\ref{lem:MVB-simplicial} we
  obtain an MV-map $\Phi(h)\colon MV(\st_Z)\to S$ over $|Z|$. We choose a homotopy
  inverse $r\colon X\to |Z|$ of $\theta$ and define the MV-map over $X$ by the pull back of $\Phi(h)$ using $r$.
  
  Let $g_0$ and $g_1$ be two such maps. We pick a simplicial approximation
  $\theta\colon |Z|\to X$ so that the cover $(\overline{\st(i)})_{i\in Z_0}$
  refines both directed open covers pulled back by $\theta$.  Pick a homotopy
  inverse $r\colon X\to |Z|$. Since $\theta\circ r$ is homotopic to the
  identity, the pullback $(\theta\circ r)^*g_i$ is equivalent to $g_i$, and we
  are left to prove that $\theta^*g_0$ is equivalent to $\theta^*g_1$. According
  to Lemma \ref{lem:MVB-simplicial} there exists simplical maps
  $f_0,f_1\colon Z\to \diag \sing S$ such that $\theta^*g_0=\Phi(f_0)$,
  $\theta^*g_1=\Phi(f_1)$ and the geometric realizations $|f_0|$ and $|f_1|$ are
  homotopic.  By a relative version of simplicial approximation, we can find a
  subdivision $Y$ of $Z\times \Delta^1$ and a simplicial map $h\colon Y\to S$
  which extends $f_0$ and $f_1$. Then $\Phi(h)$ provides an equivalence between
  $\theta^*g_0$ and $\theta^*g_1$.
\end{proof}

\subsection{Principal bundles}

Let $Q$ be a unital associative monoid in spaces.

\begin{dfn}
  A $Q$-bundle $q$ indexed by a directed open cover $(U_i)_{i\in I}$ is a map $q_{ij}\colon U_{ij} \to Q$ for all $i<j$ such that
for all $i<j<k$, $q_{ij}q_{jk}=q_{ik}$ on $U_{ijk}$.
\end{dfn}

Any $Q$-bundle over $Y$ can be pulled back to $X$ along any continuous map $f\colon X\to Y$.
Also, one may restrict $Q$-bundles to any directed refinement.

\begin{dfn}  
  Two $Q$-bundles $((U^0_i)_{i\in I^0},q^0_{ij})$ and $((U^1_i)_{i\in I^1}, q^1_{ij})$ are equivalent
if they are the restriction of a $Q$-bundle on $X\times[0,1]$ to $X\times\{0\}$ and $X\times \{1\}$
respectively.
\end{dfn}

Recall also the bar construction of $Q$: define the simplicial space $BQ$ by \begin{equation}BQ_n=Q^n,\end{equation}
the faces for $0\leq i \leq n$, $d_i\colon Q^n\to Q^{n-1}$ given by $d_0(q_1,\dots, q_n)=(q_2,\dots,q_n)$, $d_n(q_1,\dots,q_n)=(q_1,\dots,q_{n-1})$
and $d_i(q_1,\dots,q_n)=(q_1,\dots,q_{i-1},q_iq_{i+1},\dots,q_n)$ if $0<i<n$,
degeneracies for $0\leq i \leq n$, $s_i\colon Q^n\to Q^{n+1}$ given by $s_i(q_1,\dots,q_n)=(q_1,\dots,q_i,1,q_{i+1},\dots,q_n)$.
More generally for a non-decreasing map $f\colon [n]\to [m]$, $f^*\colon Q^m\to Q^n$ is given by $f^*(q_1,\dots,q_m)=(p_1,\dots,p_n)$
with $p_i=\prod_{f(i-1)<k\leq f(i)} q_k$.

Observe that a simplicial map $E_.\to BQ$ is uniquely determined by the map
$h\colon E_1\to Q$. Namely the components of the map
$(q_1,\dots,q_n)\colon E_n\to Q^n$ are determined as follows: for $1\leq i\leq n$,
$q_i=h\circ f^*\circ(q_1,\dots,q_n)$ where $f\colon [1]\to[n]$ is defined by
$f(0)=i-1$ and $f(1)=i$. In particular a $Q$-bundle over $X$ with respect to a
directed open cover $(X_i)_{i\in I}$ is precisely the same data as a simplicial map
$g\colon MV(X_\bullet) \to BQ$.

The space $|BQ|$ is a classifying space for $Q$-bundles in the following sense.  A
$Q$-bundle with respect to a directed open cover $(X_i)_{i\in I}$ is a simplicial
map $MV(X_\bullet)\to BQ$. It induces a map on the geometric realizations
$f\colon |MV(X_\bullet)|\to |BQ|$.  Now the map $|MV(X_\bullet)|\to X$ is by
Lemma~\ref{MV-contractible-choice} a homotopy equivalence 
homotopy inverse $s\colon X\to |MV(X_\bullet)|$ defined up to a contractible choice, and we may consider
$f\circ s\colon X\to |BQ|$.  This associates to a $Q$-bundle an element of $[X,|BQ|]$
(homotopy classes of maps $X\to |BQ|)$, and we have:

\begin{prop}\label{classifying space}
  The above construction induces a bijection between equivalence classes of $Q$-bundles on $X$ and $[X,|BQ|]$.
\end{prop}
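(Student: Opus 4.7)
The plan is to derive this from Corollary~\ref{cor:simp-approx} applied with $S = BQ$, leveraging the observation already recorded in the text that a simplicial map $MV(X_\bullet) \to BQ$ is exactly the data of a $Q$-bundle indexed by $(X_i)_{i \in I}$, since any such simplicial map is determined by its value in simplicial degree one and the simplicial identities translate into the cocycle relation $q_{ij}q_{jk}=q_{ik}$.

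First I would verify well-definedness of the assignment $((X_i), q_{ij}) \mapsto [|g| \circ s]$ on equivalence classes. Since $\pi\colon |MV(X_\bullet)|\to X$ is a homotopy equivalence by Proposition~\ref{MV-contractible-choice}, any two choices of right homotopy inverse $s$ are themselves homotopic, so the homotopy class of $|g| \circ s$ depends only on $g$. If two $Q$-bundles are equivalent via a $Q$-bundle $H$ on $X \times [0,1]$, I would pick a section of $|MV(W_\bullet)|\to X\times [0,1]$ extending the chosen endpoint sections and compose with $|H|$ to get the required homotopy between the two associated maps $X\to |BQ|$.

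Next, for surjectivity, given $f\colon X\to |BQ|$, Corollary~\ref{cor:simp-approx} provides a directed open cover $(U_i)$ and a simplicial map $g\colon MV(U_\bullet)\to BQ$ (i.e.\ a $Q$-bundle) with $f\circ \pi \simeq |g|$, and composing with $s$ gives $f \simeq |g|\circ s$. For injectivity, suppose two $Q$-bundles $((U_i^0), q^0_{ij})$ and $((U_i^1), q^1_{ij})$ produce homotopic maps $f_0, f_1 \colon X\to|BQ|$. Applying the uniqueness part of Corollary~\ref{cor:simp-approx} to the homotopy $F\colon X\times [0,1]\to|BQ|$ yields an MV-map from some $MV(W_\bullet)$ on $X\times[0,1]$ to $BQ$ whose endpoint restrictions are equivalent, as MV-maps with target $BQ$, to the two given $Q$-bundles. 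Since Definition~\ref{dfn:equivMVmaps} for MV-maps into $BQ$ literally coincides with equivalence of $Q$-bundles, this gives the result after concatenating equivalences along $[0,1]$.

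The main obstacle I anticipate is a bookkeeping point at this last step: the MV-map produced by the corollary on $X\times [0,1]$ only \emph{restricts} at the endpoints to $Q$-bundles equivalent to the given ones, not equal to them. To conclude, I would paste together three $Q$-bundles on $X\times [0,\tfrac13]$, $X\times[\tfrac13,\tfrac23]$, $X\times[\tfrac23,1]$, namely the two endpoint equivalences and the middle one from the corollary, using that the pullback of any $Q$-bundle along a reparametrization of the interval is equivalent to it, and that equivalence of $Q$-bundles is transitive via this concatenation construction (which in turn follows from the gluing of directed open covers along a common refinement of the seams, exactly as in the proof that refinement preserves equivalence earlier in the appendix).
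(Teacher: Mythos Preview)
Your proposal is correct and takes essentially the same approach as the paper: the paper's proof is the single line ``This follows from Corollary~\ref{cor:simp-approx} with $S=BQ$,'' and you have simply unpacked that deduction, including the identification of $Q$-bundles with simplicial maps $MV(X_\bullet)\to BQ$ already noted in the text. Your injectivity argument is slightly more roundabout than necessary (you could invoke the uniqueness clause of the corollary directly rather than applying the existence clause over $X\times[0,1]$ and then patching endpoints), but both routes work.
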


\begin{proof}
  This follows from Corollary \ref{cor:simp-approx} with $S=BQ$.
\end{proof}

\subsection{Associated bundles}\label{sec:app_associated_bundles}
Let $F$ be a right $Q$-space - i.e. a space with a right continuous action of the monoid $Q$.
We define a simplicial space $B(F,Q)$ as $B(F,Q)_n=F\times Q^n$ and for 
$f\colon [n]\to [m]$, $f^*\colon F\times Q^m\to F\times Q^n$ is given by $f^*(x,q_1,\dots,q_m)=(x',p_1,\dots,p_n)$
with 
\begin{equation} x'=x\left(\prod_{0<k\leq f(0)} q_k\right),\quad p_i=\prod_{f(i-1)<k\leq f(i)} q_k.\end{equation}

\begin{dfn} \label{dfn:twisted_map}
  A $Q$-twisted map to $F$ over $X$, $((X_i)_{i\in I}, f_i, q_{ij})$, consists of the
  data of a $Q$-bundle $((X_i)_{i\in I},q_{ij})$ and maps $f_i\colon X_i\to F$ such
  that $f_iq_{ij}=f_j$ on $X_{ij}$.
\end{dfn}

If $F$ is a point then this just recovers the notion of $Q$-bundle. Any map $F \to G$
of right $Q$-spaces associates to any $Q$-twisted map to $F$ a $Q$-twisted map to $G$. 
In particular the map $F \to \{*\}$ associates to a $Q$-twisted map to $F$ over $X$ a
normal $Q$-bundle (forgetting the maps $f_i$).
As for $Q$-bundles, a $Q$-twisted map to $F$ with respect to the
directed open covering $(X_i)_{i\in I}$ is the same as a simplicial map
$MV(X_\bullet)\to B(F,Q)$.  We define analogously the notion of equivalence between
$Q$-twisted maps to $F$.  A $Q$-twisted map to $F$ determines uniquely a
homotopy class of maps $X\to |B(F,Q)|$ and we have the following classification
result.

\begin{prop}\label{prop:QF-bundles}
  There is a natural bijection between equivalence classes of $Q$-twisted maps to
  $F$ over $X$ and $[X,|B(F,Q)|]$.
\end{prop}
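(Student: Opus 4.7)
The plan is to reduce this statement to Corollary~\ref{cor:simp-approx} applied to the simplicial space $S = B(F,Q)$, in direct parallel with the proof of Proposition~\ref{classifying space}. The key identification to exploit is already recorded in the paper: a $Q$-twisted map to $F$ over $X$ with respect to a fixed directed open cover $(X_i)_{i \in I}$ is the same datum as a simplicial map $MV(X_\bullet) \to B(F,Q)$. Under this identification, the cocycle conditions $f_i q_{ij} = f_j$ and $q_{ij} q_{jk} = q_{ik}$ are exactly what is needed for the face maps of $B(F,Q)$ to match on double and triple intersections, and the naturality of pullback of twisted maps along continuous maps of the base corresponds to the functoriality of the $MV$-construction.

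From left to right, I would define the classifying map of a $Q$-twisted map $((X_i), f_i, q_{ij})$ by taking the geometric realization $|g| \colon |MV(X_\bullet)| \to |B(F,Q)|$ of the associated simplicial map $g$, and composing with a homotopy inverse $s\colon X \to |MV(X_\bullet)|$ of the Serre fibration $\pi$ from Proposition~\ref{MV-contractible-choice}. Since the space of such inverses is contractible, the homotopy class of $|g| \circ s$ is well-defined, giving a map on equivalence classes to $[X,|B(F,Q)|]$. Conversely, given $h\colon X \to |B(F,Q)|$, Corollary~\ref{cor:simp-approx} (with $S = B(F,Q)$) produces a directed open cover $(X_i)$ and a simplicial map $MV(X_\bullet) \to B(F,Q)$, hence a $Q$-twisted map to $F$, whose classifying map is homotopic to $h$.

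The main point requiring care is that the notion of equivalence on both sides matches under this correspondence. For this I would observe that, by construction, an equivalence of $Q$-twisted maps over $X$ is exactly a $Q$-twisted map over $X \times [0,1]$ restricting correctly at the endpoints, which translates into a simplicial map $MV(W_\bullet) \to B(F,Q)$ over a directed open cover of $X \times [0,1]$ extending the given ones on $X \times \{0,1\}$. This is precisely the notion of equivalence of MV-maps from Definition~\ref{dfn:equivMVmaps}, so the uniqueness clause in Corollary~\ref{cor:simp-approx} gives injectivity on equivalence classes, while the existence clause gives surjectivity.

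I expect no serious obstacle: the proof is essentially formal once the identification between $Q$-twisted maps to $F$ and simplicial maps into $B(F,Q)$ is in hand, and the hard work is already encapsulated in Corollary~\ref{cor:simp-approx} (which itself rests on simplicial approximation, Lemma~\ref{lem:MVB-simplicial}, and the fibration property of $\pi\colon |MV(X_\bullet)| \to X$). The only mild subtlety is keeping the bookkeeping consistent when moving between the $\sigma\colon J \to I$ refinements of directed open covers and the equivalence relation on twisted maps, but this is handled verbatim as in the proof of Proposition~\ref{classifying space}, using the cylinder construction $W_k = U_k \times [0,2/3)$ and $W_k = V_k \times (1/3,1]$ introduced in that argument.
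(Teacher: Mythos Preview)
Your proposal is correct and follows exactly the paper's approach: the paper's proof is the single line ``This follows from Corollary~\ref{cor:simp-approx} with $S=B(F,Q)$,'' and you have simply unpacked the details of that reduction. The identification of $Q$-twisted maps to $F$ with simplicial maps $MV(X_\bullet)\to B(F,Q)$, together with the matching of equivalence relations via Definition~\ref{dfn:equivMVmaps}, is precisely what makes that one-line proof go through.
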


\begin{proof}
  This follows from Corollary \ref{cor:simp-approx} with $S=B(F,Q)$.
\end{proof}

\begin{lem}\label{lem:BQQ}
For any topological monoid $Q$, the space $|B(Q,Q)|$ is contractible.
\end{lem}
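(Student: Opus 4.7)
The plan is to exhibit an \emph{extra degeneracy} on the simplicial space $B(Q,Q)$, which will yield a simplicial contraction to a single point. Concretely, using the unit $1\in Q$, define continuous maps
\[ \eta_n\colon B(Q,Q)_n \to B(Q,Q)_{n+1}, \qquad \eta_n(x,q_1,\ldots,q_n) = (1,x,q_1,\ldots,q_n) \]
for all $n\geq 0$, together with the augmentation $\varepsilon \colon B(Q,Q)_0 = Q \to \ast$ and $\eta_{-1}\colon \ast \to Q$, $\ast \mapsto 1$. These are well-defined because $F = Q$ here, so the first coordinate of an element of $B(Q,Q)_{n+1} = Q\times Q^{n+1}$ can legitimately accept $1\in Q$ followed by $(x,q_1,\ldots,q_n)\in Q^{n+1}$.

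The core step is to verify the extra degeneracy identities
\[ d_0\eta_n = \id, \qquad d_i \eta_n = \eta_{n-1} d_{i-1} \ (1\leq i \leq n+1), \qquad s_i\eta_n = \eta_{n+1}s_{i-1} \ (0\leq i\leq n+1). \]
This is a direct calculation from the formulas for $B(F,Q)$ in Section~\ref{sec:app_associated_bundles}: the coface $\delta_0$ multiplies the first two coordinates, so $d_0 \eta_n(x,q_1,\ldots,q_n) = (1\cdot x, q_1,\ldots,q_n) = (x,q_1,\ldots,q_n)$, using only that $1$ is a left unit in $Q$. For $i\geq 1$, the face $d_i$ either multiplies two entries strictly to the right of the inserted $1$ or combines $1$ with $x$ (when $i=1$), and in both cases one recognises $\eta_{n-1}d_{i-1}$. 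The identities for degeneracies are analogous.

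Once these identities are in place, the standard argument applies: the iterated composites $\eta_{n-1}\cdots \eta_0 \varepsilon\colon B(Q,Q)_n \to B(Q,Q)_n$ assemble into a simplicial homotopy on the augmented object between the identity and the constant map at $1\in Q = B(Q,Q)_0$. Passing to geometric realisations, this produces an explicit continuous deformation retraction of $|B(Q,Q)|$ onto the point represented by $1\in Q$, so $|B(Q,Q)|$ is contractible.

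I expect the only non-routine aspect to be making the passage from the simplicial extra degeneracy to an honest continuous homotopy on $|B(Q,Q)|$ clean; this is classical but deserves at least a brief reminder for the reader, since it is used as a black box at several places in the paper (e.g.\ in the proof of Theorem~\ref{thm:giroux}). The verification of the simplicial identities themselves is entirely formal, using only the unit axiom of $Q$ and the explicit formulas for $f^*$ in the definition of $B(F,Q)$.
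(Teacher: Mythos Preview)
Your proposal is correct: the extra-degeneracy argument is a standard and clean way to show that $|B(Q,Q)|$ is contractible, and the identities you list hold (with the convention that $s_{-1}$ in the case $i=0$ means $\eta$ itself). The paper takes a closely related but not identical route: it writes down an explicit simplicial homotopy
\[
h_i(q_0,\dots,q_n)=(q_0,\dots,q_{i-1},\,q_i\cdots q_n,\,e,\dots,e)
\]
between the identity and the constant map at the unit, and checks the simplicial homotopy relations by hand. Your $\eta_n$ inserts the unit at the front and uses only the left-unit axiom, while the paper's $h_i$ collapses the tail via iterated multiplication and so also uses associativity; both unwind to explicit prism-type contractions on the realisation and are equally valid. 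One small wrinkle: your sentence about ``the iterated composites $\eta_{n-1}\cdots\eta_0\varepsilon\colon B(Q,Q)_n\to B(Q,Q)_n$'' does not type-check as written (the domains and codomains do not compose), so it would be cleaner to simply invoke the classical fact that an extra degeneracy on an augmented simplicial space induces a deformation retraction of the realisation onto the augmentation, or to spell out the induced $h_i$'s directly.
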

\begin{proof}
For $n \in \N$, $B(Q,Q)_n=Q^{n+1}$.
It suffices to write down a simplicial homotopy between the simplicial
maps $B(Q,Q)\to B(Q,Q)$ given by the identity map and the constant map $c_e(q_0,\dots,q_n)=(e,\dots,e)$, where $e\in Q$ is the unit.
We define for $0\leq i\leq n$,
$h_i\colon Q^{n+1}\to Q^{n+2}$ by the formula
\begin{equation}h_i(q_0,\dots q_n)=(e,\dots,e,q_0\cdots q_{i},q_{i+1},\dots,q_n).\end{equation}
We can check that the following homotopy relations hold:
\begin{align}
&d_0h_0=\id,\quad d_{n+1}h_n=c_e,\\
&d_ih_j=h_{j-1}d_i \text{ if } i<j,\quad d_{j+1}h_{j+1}=d_{j+1}h_j,\quad d_ih_j=h_jd_{i-1} \text{ if } i>j+1,\\
&s_ih_j=h_{j+1}s_i \text{ if } i\leq j,\quad s_ih_j=h_js_{i-1} \text{ if } i>j.
\end{align}
\end{proof}

A map $f_i\colon X_i\to F$ is also a section of $F\times X \to X$ over $X_i$. Hence we
can reformulate the above notions by considering $Q$ acting on $F\times X$
preserving the projection $F\times X\to X$. We generalize this notion by
replacing $F\times X\to X$ by any $Q$-space $E$ over $X$ (here ``over'' means
with a map to $X$ for which $Q$ preserves fibers):
\begin{dfn} \label{dfn:Q-twisted-section}
  A  {\em $Q$-twisted section of $E$ over $X$} consists of the data of a $Q$-bundle $((X_i)_{i\in I},q_{ij})$ and sections $f_i\colon X_i\to E|X_i$ such that $f_iq_{ij}=f_j$ on $X_{ij}$.
\end{dfn}
The notion of $Q$-twisted map to
$F$ over $X$ now corresponds to a $Q$-twisted section of $F \times X$ over $X$.

\subsection{Total orderings}

\begin{lem}\label{lem:total-order}
  Any directed open cover on a smooth manifold $X$ admits a refinement where the order on $I$ is a total order.
\end{lem}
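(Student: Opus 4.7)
The plan is to combine paracompactness of smooth manifolds with a standard order-extension argument. Since $X$ is a smooth manifold, hence paracompact, I can choose a locally finite refinement $(V_j)_{j\in J}$ of $(U_i)_{i\in I}$ together with a map $\sigma\colon J\to I$ such that $V_j\subset U_{\sigma(j)}$ for every $j\in J$. By local finiteness, for each $x\in X$ the set $J_x=\{j\in J; x\in V_j\}$ is finite, and of course $\sigma(J_x)\subset I_x$.

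Next I equip $J$ with a partial order $\preceq$ by declaring $j\preceq j'$ whenever either $j=j'$ or $\sigma(j)<\sigma(j')$ strictly in $I$; reflexivity, antisymmetry, and transitivity follow directly from the corresponding properties of $<$ on $I$. Then I invoke Szpilrajn's extension theorem to extend $\preceq$ to a total order $\leq$ on $J$.

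It remains to check that $((V_j)_{j\in J},\sigma)$ with this total order is a directed open cover refining $(U_i)_{i\in I}$ in the sense of Definition~\ref{dfn:refinment_directed_cover}. The finiteness of $J_x$ follows from local finiteness, and $J_x$ inherits a total order as a subset of $J$; the inclusion $V_j\subset U_{\sigma(j)}$ holds by construction. For the non-decreasing condition on $\sigma|_{J_x}\colon J_x\to I_x$, suppose $j<j'$ in $J_x$. Then $\sigma(j),\sigma(j')\in I_x$, which is totally ordered in the original cover, so they are comparable in $I$. Were $\sigma(j)>\sigma(j')$ to hold, the partial order would give $j'\preceq j$ with $j'\neq j$, forcing $j'<j$ after extension and contradicting $j<j'$; hence $\sigma(j)\leq\sigma(j')$, as required.

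The main point to get right is ensuring that the total-order extension on $J$ introduces no comparison incompatible with the directed structure already present on the finite slices $J_x$. The key observation is that $I_x$ was already totally ordered in the original cover, so the only forced comparisons among elements of $J_x$ are those inherited through $\sigma$, which are precisely the relations encoded by $\preceq$; since these are preserved by any extension, no bad comparisons can arise on any $J_x$, and the construction delivers the desired totally-ordered refinement.
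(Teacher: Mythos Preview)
Your proof is correct and takes a genuinely different route from the paper's. The paper proceeds geometrically: it picks a fine triangulation of $X$, takes $J$ to be the set of simplices, orders $J$ lexicographically by dimension and then by an arbitrary total order, lets $V_j$ be the open star of the barycenter of $j$ in the barycentric subdivision, and defines $\sigma(j)=\max\{i\in I:\st(j)\subset U_i\}$. The monotonicity of $\sigma$ on each $J_x$ is then a consequence of the nesting $\st(j')\subset\st(j)$ when $j$ is a face of $j'$.

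Your argument replaces all of this by paracompactness plus the order-extension theorem: any locally finite refinement works, the only constraint on the total order is that it extend the pullback of the strict order on $I$ via $\sigma$, and Szpilrajn handles that in one stroke. This is shorter, and it shows the lemma holds for any paracompact Hausdorff space, not just smooth manifolds. The paper's version, on the other hand, is constructive and yields a refinement with concrete geometric content (stars in a barycentric subdivision), which can be convenient when one later wants to manipulate the cover explicitly. One cosmetic point: the definition of directed open cover requires each $V_j$ to be non-empty, so you should either choose the paracompact refinement with non-empty members or discard the empty ones before ordering; this changes nothing in the argument.
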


\begin{proof}
  Let $(U_i)_{i\in I}$ be a directed open cover of $M$.

  We pick a triangulation of $M$ so fine that the stars of the simplices refine $(U_i)_{i\in I}$. Denote
  by $J$ the set of all (non-degenerate) simplices in this triangulation and choose an arbitrary total order $\leq_0$ on $J$ and consider the
  lexicographic order $\leq$ given by dimension of the simplex and $\leq_0$, namely for two simplices $j, j'$,
  we have $j \leq j'$ if and only if $\dim j < \dim j'$ or $\dim j=\dim j'$ and $j \leq_0 j'$.
  Observe that in the barycentric subdivision, the vertices correspond bijectively to the original simplices.
  Now for $j\in J$, define $V_j$ to be the star of the $0$-simplex corresponding to $j$
  in the barycentric subdivision. We define $r\colon J\to I$ by the formula
  $r(j)=\max I_j$
  where \begin{equation}I_j=\{i\in I, \st(j)\subset U_i\}\end{equation}
  (here, $\st(j)$ refers to the original triangulation). Note that $I_j$ is finite and totally ordered and not empty since
  the cover $(\st(j))_{j\in J}$ refines $(U_i)_{i\in I}$, and hence the maximum makes sense.
  We have for all $j\in J$, $V_j\subset U_{r(j)}$ since $V_j\subset \st(j)$.
  Observe now that for all $x\in M$, the set $J_x=\{j\in J, x\in V_j\}$
  consists of simplices of distinct dimensions and that for $j,j'\in J_x$,
  if $\dim j<\dim j'$, $\st(j')\subset \st(j)$ and hence $r(j')\geq r(j)$.
  Hence $((V_j)_{j\in J},r)$ is a refinement of $(U_i)_{i\in I}$.
\end{proof}

\subsection{Shrinking}

\begin{dfn} \label{neg-shrink}
A \emph{shrinking} of an open cover $(X_i)_{i\in I}$ of a space $X$ is a sub-open cover $X'_i \subset X_i$, with the same indexing set. A set of shrinkings is \emph{negligible} if, given any closed subsets $C_i \subset X_i$, the given set includes a shrinking so that $C_i \subset X'_i$. 
\end{dfn}

Let $\cS$ be a set of closed subsets of a space $X$. We will denote all possible closed sets constructed by any finite number of combinations of taking boundaries, closure of complements, unions and intersections by $G(\cS)$.

\begin{lem} \label{cofibshrink}
  For any finite open cover $(M_i)_{i\in I}$ of a smooth manifold $M$ we can up to negligible shrinking assume that any inclusion $A \subset B$ with $A,B \in G(\{\overline{M_i}\}_{i\in I})$ is a cofibration.
\end{lem}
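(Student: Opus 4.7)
The plan is to reduce the statement to a purely combinatorial one: shrink the cover so that every element of $G(\{\overline{M'_i}\}_{i\in I})$ is a subcomplex of some common triangulation of $M$. Since inclusions of subcomplexes are always cofibrations, this will suffice.

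First, I would produce the shrinking. Given any prescribed closed sets $C_i \subset M_i$, choose smooth functions $\rho_i \colon M \to [0,1]$ with $\rho_i \equiv 1$ on $C_i$ and $\supp \rho_i \subset M_i$. By Sard's theorem, for each $i$ one can pick a regular value $c_i \in (0,1)$; moreover, by a small generic perturbation of the $\rho_i$ and of the $c_i$, we may arrange the smooth hypersurfaces $H_i = \rho_i^{-1}(c_i)$ to be pairwise transverse inside $M$ (this is a standard multijet-transversality argument). Set $M'_i = \rho_i^{-1}((c_i,1])$, so that $C_i \subset M'_i \subset M_i$ and $\overline{M'_i} = \rho_i^{-1}([c_i,1])$ has smooth boundary $H_i$, with the full family of boundaries in general position.

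Next, I would invoke a compatible triangulation. A finite collection of mutually transverse smooth closed hypersurfaces in $M$ (together with the half-spaces they cut out) defines a Whitney stratification, and such stratifications admit smooth triangulations in which each stratum, and hence each $\overline{M'_i}$, is a subcomplex. (One can quote Goresky's triangulation theorem, or produce the triangulation directly by a downward induction on the strata of the arrangement of the $H_i$.)

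Finally, I would check that the class $\cS$ of subcomplexes of this triangulation is closed under the four operations generating $G$ from $\{\overline{M'_i}\}$. Finite unions and intersections of subcomplexes are visibly subcomplexes. For the closure of the complement of a subcomplex $A$, observe that $\overline{M\setminus A}$ equals the union of the closed simplices $\sigma$ having at least one face not contained in $A$; this collection is closed under taking faces, so is a subcomplex. Consequently the boundary $\partial A = A \cap \overline{M\setminus A}$ is also a subcomplex. Thus every element of $G(\{\overline{M'_i}\})$ is a subcomplex, every inclusion $A\subset B$ in $G$ is a subcomplex inclusion, and the lemma follows. The main technical obstacle is the compatible triangulation step, i.e.\ arranging simultaneously smoothness of each $H_i$, transversality of the whole family, and the existence of a triangulation subordinate to the resulting arrangement; once this is in place, the combinatorial argument above is essentially automatic.
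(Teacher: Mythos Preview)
Your proof is correct and shares the same opening move as the paper: shrink so that each $\overline{M'_i}$ has smooth boundary and all the boundary hypersurfaces meet transversely. The paper's proof in fact stops there, with a one-sentence appeal to ``standard transversality theory,'' leaving entirely implicit why this transversality forces the inclusions in $G(\{\overline{M'_i}\})$ to be cofibrations. You supply that missing justification by triangulating the resulting stratification and reducing to subcomplex inclusions, which is a clean way to finish the argument. (An alternative completion, closer in spirit to what the paper presumably has in mind, is that each element of $G$ is then a manifold with corners and such inclusions admit collars; your route via Goresky's triangulation theorem is equally valid and arguably more robust.)

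One small wording issue: in your verification that $\overline{M\setminus A}$ is a subcomplex, the phrase ``closed simplices $\sigma$ having at least one face not contained in $A$'' should read \emph{coface} (or ``some simplex containing $\sigma$'') rather than face. As written the collection you describe is just $\{\sigma:\sigma\notin A\}$, which is not closed under taking faces. The correct description is $\{\sigma:\exists\,\tau\supseteq\sigma\text{ with }\tau\notin A\}$, and that set is visibly face-closed with union $\bigcup_{\tau\notin A}|\tau|=\overline{M\setminus A}$. The conclusion is unaffected.
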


\begin{proof}
  By standard transversality theory we can assume that each open set in the shrinking has a smooth manifold as boundary and that any number of these boundaries intersects transversely.
\end{proof}

\subsection{Smoothing}

Let now $Q$ be a monoid in the category of smooth manifolds (i.e. the product map is smooth). Let also $E \to M$ be a smooth fiber bundle with $Q$ acting smoothly (preserving fibers as before). A \emph{smooth} $Q$-twisted section $(f_i,q_{ij},(M_i)_{i\in I})$ of $E$ is a $Q$-twisted section where the maps $f_i$ and $q_{ij}$ are smooth.

\begin{rem}\label{rem:smoothextension}
Let $X$ be a manifold and $A$ a closed subset of $X$. A function $f\colon A\to \R$
is said to be smooth if it can be extended to a smooth function $g$ on an open neighborhood $U$ of $A$.
Any smooth function on $A$ can be extended to a smooth function on $X$.
A germ of function on $A$ is an extension $(U,g)$ defined up to shrinking $U$.
Any germ of function on $A$ can be also extended to $X$.
\end{rem}

\begin{rem}\label{warning}
  We point out that if $f:A \to \R$ and $g:B \to \R$ is just smooth and they agree on $A\cap B$ then the glued function defined on $A\cup B$ need not be smooth. However, if there are germs of smooth functions defined on $A$ and $B$ which agree near $A\cap B$ then the extended germ is smooth.
\end{rem}

\begin{lem} \label{App:smoothing}
  If $Q$ and $E$ are smooth, then any $Q$-twisted section of $E$ is equivalent to a smooth one.
\end{lem}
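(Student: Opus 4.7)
The plan is to construct an equivalence from the given $Q$-twisted section $(f_i, q_{ij}, (M_i)_{i\in I})$ to a smooth one by smoothing the cocycle $q_{ij}$ first and then the sections $f_i$, working inductively over a finite totally ordered refinement of the cover. First, using Lemma \ref{lem:total-order}, I would refine $(M_i)_{i\in I}$ to have totally ordered indexing set, and (working over a compact exhaustion of $M$) reduce to the case $I = \{1, \dots, k\}$ finite. Lemma \ref{cofibshrink} then supplies a shrinking for which all inclusions among the closures of intersections $\overline{M_{i_0 \cdots i_n}}$ are cofibrations, so that smooth germs defined on closed subsets may be extended to global smooth functions.

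I would realize the equivalence as a $Q$-twisted section over $M \times [0,1]$ with cover $(M_i \times [0,1])_{i\in I}$, interpolating between the given continuous data at $t=0$ and smooth data at $t=1$, with the full cocycle identities $q_{ij}^t q_{jk}^t = q_{ik}^t$ and $f_i^t q_{ij}^t = f_j^t$ holding at every time $t \in [0,1]$. The cocycle homotopy is built by induction on $j \geq 2$. At step $j$, I would first smoothly approximate $q_{j-1,j}\colon M_{j-1,j} \to Q$ by a homotopy $q_{j-1,j}^t$ via Whitney approximation into the manifold $Q$, which is unconstrained by any previously smoothed datum. For each $i < j-1$ the identity $q_{ij}^t = q_{i,j-1}^t \cdot q_{j-1,j}^t$ then determines a smooth germ on a neighborhood of $\overline{M_{i,j-1,j}}$ inside $M_{ij}$; I would extend this germ to all of $M_{ij}$ by partition-of-unity interpolation with the original $q_{ij}$, which agrees there at $t = 0$, yielding a smooth homotopy $q_{ij}^t$.

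The main obstacle is keeping the cocycle identity valid on \emph{every} triple $M_{hij}$, not only those directly modified at each step. This is handled by choosing the supports of the partition-of-unity interpolations carefully, in a nested fashion dictated by the total order, so that on any triple $M_{hij}$ the perturbation of at most one of $q_{hi}^t, q_{ij}^t, q_{hj}^t$ is active at any given point, allowing the cocycle identity to be inherited from the $t=0$ case at that point. Once the cocycle is smooth, smoothing the sections proceeds by an analogous but easier induction on the lower index $i$: at step $i$, the identity $f_i^t = f_h^t \cdot q_{hi}^1$ on $M_{hi}$ with $h < i$ forces a smooth germ which is then extended by partition of unity to all of $M_i$, producing the desired equivalent smooth $Q$-twisted section.
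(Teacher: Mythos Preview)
Your overall strategy---reduce to a finite totally ordered cover, invoke the cofibration shrinking of Lemma~\ref{cofibshrink}, and realise the equivalence as a $Q$-twisted section over $M\times[0,1]$---is correct and matches the paper. The gap is in your inductive scheme for the cocycle homotopy.

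At step $j$ you enforce only the single relation $q_{ij}^t=q_{i,j-1}^t\,q_{j-1,j}^t$ on $M_{i,j-1,j}$ and then extend $q_{ij}^t$ over the rest of $M_{ij}$ by ``partition-of-unity interpolation with the original $q_{ij}$''. There are two problems. First, $Q$ is merely a smooth manifold (a monoid, not a vector space), so there is no partition-of-unity interpolation between $Q$-valued maps; the only available mechanism is homotopy extension from a cofibrant subset. Second, and more seriously, the cocycle relation $q_{hk}^t\,q_{kj}^t=q_{hj}^t$ must hold on every $M_{hkj}$ with $h<k<j$, not only for $k=j-1$. Away from $M_{j-1}$ your extension pushes $q_{kj}^t$ and $q_{hj}^t$ back towards their original values, but $q_{hk}^t$ was already deformed at the earlier step $k$ and is no longer the original $q_{hk}$ there; hence $q_{hk}^t\,q_{kj}\neq q_{hj}$ in general on $M_{hkj}\setminus M_{j-1}$. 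The suggestion to repair this by ``nested supports'' cannot work as stated, because the support of the earlier homotopy $q_{hk}^t$ has no reason to lie inside $M_{j-1}$.

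The paper's remedy is to run, inside each outer step $n$, a \emph{downward} inner induction on $i<n$: when constructing $q_{in}^t$, all $q_{jn}^t$ for $i<j<n$ are already defined, so every constraint $q_{in}^t=q_{ij}^t\,q_{jn}^t$ (for all intermediate $j$, not just $j=n-1$) involves only data already fixed. These constraints, together with the initial condition at $t=0$, assemble into a map on the cofibrant subset
\[
\overline{M}_{in}\times\{0\}\;\cup\;\Bigl(\overline{M}_{in}\cap\!\bigcup_{i<j<n}\overline{M}_{jn}\Bigr)\times[0,1]\;\subset\;\overline{M}_{in}\times[0,1],
\]
and one extends by the homotopy extension property, arranging the extension to be a smooth germ at $t=1$. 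The construction of $f_n^t$ then follows the same pattern. Your treatment of the sections is essentially this argument; it is the cocycle step that needs to be organised the same way.
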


\begin{proof}
  Let $(f_i,q_{ij},(M_i)_{i\in i})$ be a $Q$-twisted section. By Lemma~\ref{lem:total-order} we can assume that the order on $I$ is total. By Lemma~\ref{cofibshrink} we may shrink to assume that all sections are defined on the closures $\overline{M}_i$ and that all inclusions in the following are cofibrations.
  
  Assume for induction that we have picked a homotopy $f_i^t$ from $f_i$ to a (germ of a) smooth section for all $i<n$ and a homotopy $q_{ij}^t$ from $q_{ij}$ to a (germ of a) smooth map on $\overline{M}_{ij}:=\overline{M}_i\cap \overline{M}_j$ for all $i<j<n$ such that these satisfies $f_i^t q_{ij}^t=f_j^t$ and $q_{ij}^tq_{jk}^t = q_{ik}^t$ for all $t\in [0,1]$. The base case $n=1$ is vacuous and each induction step starts by picking homotopies $q^t_{in}$ downwards inductively in $i<n$. Indeed, when constructing $q^-_{in}$ we have already fixed what it should be on the cofibrant subset
  \begin{align*}
    \overline{M}_{in} \times \{0\} \cup \pare{\overline{M}_{in} \cap \pare{\cup_{i<j<n} \overline{M}_{jn}}} \times [0,1] \subset \overline{M}_{in} \times [0,1]
  \end{align*}
  as this is determined by the previously defined $q_{jn}^t$, $q_{in}^0$ and the equations $q_{in}^t=q_{ij}^tq_{jn}^t$. Note that, due to the order in which we are constructing these there are no requirements on what $q_{in}^t$ should be on the remaining part and we can extend freely. The last part of the induction step is similar. Indeed, $f_n^t$ is already determined on the cofibrant subset
  \begin{align*}
    \overline{M}_ n \times \{0\} \cup \pare{\cup_{i<n} \overline{M}_{in}} \times [0,1] \subset \overline{M}_n \times [0,1]
  \end{align*}
  by the equations $f_i^tq_{in}^t=f_n^t$, and free to be extended arbitrarily.
\end{proof}

\bibliographystyle{halpha}
\bibliography{large-bib}

\end{document}